\newenvironment{enumerate*}{
\begin{enumerate}[{\rm (i)}]
  \setlength{\itemsep}{3.5pt}
  \setlength{\parskip}{0pt}
}{\end{enumerate}}
\newenvironment{enumerate!}{
\begin{enumerate}[{\rm (I.)}]
  \setlength{\itemsep}{3.5pt}
  \setlength{\parskip}{0pt}
}{\end{enumerate}}
\newenvironment{enumeratenum}{
\begin{enumerate}[{\rm (1)}]
  \setlength{\itemsep}{3.5pt}
  \setlength{\parskip}{0pt}
}{\end{enumerate}}
\renewcommand*{\thefootnote}{\arabic{footnote}}
\numberwithin{equation}{section}
\newtheorem{theorem}{Theorem}[section]
\newtheorem{lemma}[theorem]{Lemma}
\newtheorem{proposition}[theorem]{Proposition}
\newtheorem{corollary}[theorem]{Corollary} 
\newtheorem{definition}[theorem]{Definition}
\newtheorem{example}[theorem]{Example}
\newtheorem*{remark}{Remark}
\newtheorem*{acknowledgements}{Acknowledgements}
\newtheorem*{thmA}{Theorem A}
\newtheorem*{corA}{Corollary A}
\newtheorem*{thmB}{Theorem B}
\newtheorem*{thmC}{Theorem C}
\newtheorem*{thmD}{Theorem D}
\newtheorem*{thm4.1}{Theorem 4.1}
\newtheorem*{thm4.2}{Theorem 4.2}
\newtheorem*{thm4.6}{Theorem 4.6}
\newtheorem*{prop4.7}{Proposition 4.7}
\newtheorem*{prop4.8}{Proposition 4.8}
\newtheorem*{prop4.9}{Proposition 4.9}
\newtheorem*{thm4.10}{Theorem 4.10}
\newtheorem*{prop3.4'}{Proposition 3.4$'$}
\newtheorem*{propA'}{Proposition A$'$}
\newtheorem{openquestion2}[theorem]{Open Question}
\newtheorem{question}[theorem]{Question}
\titleformat*{\section}{\large \bfseries}
\titleformat*{\subsection}{\bf}
\titleformat*{\subsubsection}{\large\bfseries}
\titleformat*{\paragraph}{\large\bfseries}
\titleformat*{\subparagraph}{\large\bfseries}
\begin{document}

\title{Generating numbers of rings graded by amenable and supramenable groups} 
 
\author{
{\sc Karl Lorensen}\\
{\sc Johan \"Oinert}
}

\maketitle

\begin{abstract} A ring $R$ has {\it unbounded generating number} (UGN) if, for every positive integer $n$, there is no $R$-module epimorphism $R^n\to R^{n+1}$. For a ring $R=\bigoplus_{g\in G} R_g$  graded by a group $G$ such that  the base ring $R_1$ has UGN, we identify several sets of conditions under which $R$ must also have UGN. The most important of these are: (1) $G$ is amenable, and there is a positive integer $r$ such that, for every $g\in G$, $R_g\cong (R_1)^i$ as $R_1$-modules for some $i=1,\dots,r$; (2) $G$ is supramenable, and there is a positive integer $r$ such that, for every $g\in G$, $R_g\cong (R_1)^i$ as $R_1$-modules for some $i=0,\dots,r$.
The pair of conditions (1) leads to three different ring-theoretic characterizations of the property of amenability for groups.

We also consider rings that do not have UGN; for such a ring $R$, the smallest positive integer $n$ such that there is an $R$-module epimorphism $R^n\to R^{n+1}$ is called the {\it generating number} of $R$, denoted ${\rm gn}(R)$. If $R$ has UGN, then we define ${\rm gn}(R):=\aleph_0$. We describe several classes of examples of a ring $R$ graded by an amenable group $G$ such that ${\rm gn}(R)\neq {\rm gn}(R_1)$. 
\vspace{10pt}

\noindent {\bf Mathematics Subject Classification (2020)}:  16P99, 16S35, 16W50, 16D90, 20F65, 43A07 
\vspace{5pt}

\noindent {\bf Keywords}:  graded ring, amenable group, supramenable group, unbounded generating number, UGN, rank condition, generating number, bounded generating number, BGN, translation ring, invariant basis number, IBN, crossed product, skew group ring, twisted group ring
\end{abstract}
\let\thefootnote\relax

\newpage
\tableofcontents

\section{Introduction}

This paper relates the group-theoretic properties of amenability and supramenability to the property of having unbounded generating number for rings. 
Following P. M. Cohn \cite{CohnSkew, CohnFIR}, we say that a ring $R$ has \emph{unbounded generating number} (\emph{UGN}) if, for every positive integer $n$, there is no $R$-module epimorphism $R^n\to R^{n+1}$. This property is also referred to as the {\it rank condition} (see, for instance, \cite{Lam}). However, the term {\it UGN} is particularly apposite since the property is equivalent to the assertion that, for every positive integer $n$, there is a finitely generated $R$-module that cannot be generated by fewer than $n$ elements (see Proposition~2.2((i)$\Longleftrightarrow$(iv)) below). 

We study the UGN property for rings that are graded by a group $G$. Recall that a ring $R$ is {\it graded} by $G$, or {\it $G$-graded}, if there is a collection $\{R_g : g\in G\}$ of additive subgroups of $R$ such that $R = \bigoplus_{g\in G} R_g$ as additive groups and $R_gR_h\subseteq R_{gh}$ for all $g,h\in G$. For such a ring, each $R_g$ is referred to as a {\it homogeneous component} of $R$. Moreover, $R_1$ is a subring of $R$, called its {\it base ring}.  The \emph{support} of $R$, denoted ${\rm Supp}(R)$, is defined by ${\rm Supp}(R):=\{g\in G:R_g\neq 0\}$. If $R_gR_h=R_{gh}$ for all $g,h\in G$, then $R$ is said to be \emph{strongly graded} by $G$. 

If each homogeneous component of a $G$-graded ring $R$ contains a unit, then $R$ is called a \emph{crossed product} of $G$ over $R_1$, denoted $R_1\ast G$. Crossed products are plainly strongly graded, and their homogeneous components are all free right and left $R_1$-modules of rank one. An important special case of a crossed product is a \emph{skew group ring} of $G$ over $R_1$, defined by the presence of a group monomorphism $\phi:G\to R^\ast$ such that $\phi(g)\in R_g$ for all $g\in G$. ($R^\ast$ denotes the multiplicative group of units of $R$.) 

If a ring $R$ admits a unital ring homomorphism from $R$ to a UGN-ring, then $R$ must have UGN (see Lemma 2.7 below). As a result, the base ring always inherits the UGN property from a graded ring. Our primary goal in this paper is to investigate the extent to which the converse may hold. In other words, we explore the following question, which was posed by P. ~H.~ Kropholler in an e-mail message to the first author in May of 2020. 
 
\begin{question}[{Kropholler}] Given a group $G$ and a $G$-graded ring $R$, what assumptions concerning $R$ and $G$ will ensure that $R$ must have UGN if $R_1$ has UGN?
\end{question}

One obvious answer to this question is the assumption that $R$ is the group ring of $G$ over $R_1$, for, in this case, the augmentation map $R\to R_1$ is a unital ring homomorphism. In search of other possible answers, we identify a relevant measure-theoretic property of the support of $R$. This property, first studied by A. Tarski \cite{Tar} in 1929 and four and a half decades later by J. M. Rosenblatt \cite{Rosenblatt1, Rosenblatt2}, is defined as follows.

\begin{definition} {\rm Let $G$ be a group and $X$ a subset of $G$. We say that $X$ is \emph{amenable with respect to $G$}, or $X$ is an \emph{amenable subset} of $G$, if there is a function $\mu:\mathcal{P}(G)\to [0,\infty]$ with the following three properties.
\begin{enumerate*}
\item $\mu(A\cup B)=\mu(A)+\mu(B)$ for any sets $A, B\subseteq G$ with $A\cap B=\emptyset$. 
\item $\mu(gA)=\mu(A)$ for any set $A\subseteq G$ and $g\in G$.
\item$\mu(X)=1$.
\end{enumerate*}}
\end{definition}

Our use of the adjective ``amenable" to refer to a subset of a group with this property is a novel one. Nevertheless, the authors regard it as an appropriate choice of word because the concept is a generalization of the usual notion of an amenable group. Specifically, a group $G$ is amenable if and only if the set $G$ is amenable with respect to $G$ in the sense of Definition~1.2. Tarski \cite{Tar} proved that the amenability of a subset $X$ of a group $G$ is equivalent to the nonexistence of a paradoxical decomposition of $X$ with respect to $G$ (see \S 2.4 below). Moreover, Rosenblatt \cite{Rosenblatt1} characterized amenable subsets using a generalization of E. F\o lner's \cite{Folner} well-known condition characterizing amenable groups. 

We employ a modified version (see Definition 2.20 below) of Rosenblatt's characterization to prove our main result, Theorem A, about graded rings with amenable support.

\begin{thmA} Let $G$ be a group and $R$ a ring graded by $G$ such that the following two conditions hold.
\begin{enumerate*} 
\item There is a positive integer $r$ such that, for every $g\in G$,  $R_g\cong (R_1)^i$ as right $R_1$-modules for some $i=0,\dots, r$.
\item The support of $R$ is amenable with respect to $G$.
\end{enumerate*}
Then $R$ has UGN if and only if $R_1$ has UGN. 
\end{thmA}

Note that, in the statement of Theorem A and throughout the paper, we interpret $S^0$ to be the zero $S$-module (left or right, depending on context) for any ring $S$. We point out that there is also a dual version of Theorem A involving left $R_1$-modules and \emph{right amenable} subsets (see Corollary 3.1 below, as well as the paragraph preceding it). 

Theorem A leads directly to our next result, Theorem B, which treats rings graded by amenable groups and provides three ring-theoretic characterizations of the property of amenability for groups.
In the statement of this result and subsequently, we will employ several terms specific to this paper that pertain to a ring $R$ graded by a group $G$. We say that the grading is \emph{full} if ${\rm Supp}(R)=G$. Moreover, we
will refer to the grading as \emph{free} (respectively, \emph{projective}) if the homogeneous components are all finitely generated free (respectively, projective) right $R_1$-modules or all finitely generated free (respectively, projective) left $R_1$-modules.

We call the grading \emph{boundedly free} if there exists a positive integer $r$ such that at least one of the following two conditions is satisfied.
\begin{itemize}
\item For each $g\in G$, there is an integer $i\in [0,r]$ such that $R_g\cong \left (R_1\right )^i$ as right $R_1$-modules.
\item For each $g\in G$, there is an integer $i\in [0,r]$ such that $R_g\cong \left (R_1\right )^i$ as left $R_1$-modules.
\end{itemize}
If the grading is free but not boundedly free, then it is said to be \emph{unboundedly free}. 

The grading is referred to as \emph{boundedly projective} if there is a positive integer $r$ such that at least one of the following two statements is true.
\begin{itemize}
\item For each $g\in G$, $R_g$ is a direct summand in $\left (R_1\right )^r$ as a right $R_1$-module.
\item For each $g\in G$, $R_g$ is a direct summand in $\left (R_1\right )^r$ as a left $R_1$-module.
\end{itemize}
A projective grading is described as \emph{unboundedly projective} if it is not boundedly projective.  

It is important to bear in mind that crossed products are fully graded and boundedly free. Furthermore, every strong grading is necessarily both full and projective (see \cite[Theorem 3.1.1]{NO}). 

\begin{thmB} The following four statements are equivalent for a group $G$.
\begin{enumerate*}
\item G is amenable.
\item Every ring with a full and boundedly free $G$-grading has UGN if and only if its base ring has UGN.
\item Every crossed product of $G$ over a ring $R$ has UGN if and only if $R$ has UGN.
\item Every skew group ring of $G$ over a ring $R$ has UGN if and only if $R$ has UGN. 
\end{enumerate*}
\end{thmB}

Crossed products involving amenable groups form a prominent collection of rings to which we can apply Theorem B to ascertain whether they have UGN. But there are quite a few instances of rings graded by amenable groups that are not crossed products where the homogeneous components are still free modules of rank one over the base ring. One such family are the Weyl rings, treated in Corollary~3.9 below.  Both the Weyl rings and crossed products are special cases of what are called {\it crystalline} and {\it precrystalline} graded rings, investigated in \cite{NO2} and \cite{Oeinert}. Such rings all possess homogeneous components that are free of rank one and thus can be analyzed profitably using Theorem~B, provided that the grading group is amenable. However, the authors are not aware of any interesting applications of Theorem B to rings where the ranks of some of the homogeneous components are larger than one. 

Theorem A also immediately gives rise to a corollary about rings graded by a supramenable group, a type of group that was introduced by Rosenblatt \cite{Rosenblatt2}. He defined a group $G$ to be \emph{supramenable} if every nonempty subset of $G$ is amenable. Significantly, he proved that any group whose finitely generated subgroups all display a subexponential rate of growth must be supramenable (\cite[Theorem~4.6]{Rosenblatt2}). Since finitely generated, virtually nilpotent groups have a polynomial rate of growth, Rosenblatt's result implies that every locally virtually nilpotent group is supramenable. In fact, within the class of elementary amenable groups, these two properties are equivalent (see \cite[Theorem~3.2$'$]{Chou} or \cite[p. 288]{Wagon}). But they fail to be equivalent in general, as demonstrated by the existence of finitely generated groups exhibiting a growth rate that is subexponential but not polynomial (see \cite{Grigor}).

 Our corollary about rings graded by supramenable groups pertains to boundedly free gradings that may not be full.

\begin{corA} Let $G$ be a supramenable group and $R$ a ring with a boundedly free $G$-grading. Then $R$ has UGN if and only if $R_1$ has UGN.  
\end{corA}

Whether such a restrictive condition on the group is really necessary in Corollary A remains unresolved.

\begin{openquestion2} Let $G$ be an amenable group, and let $R$ be a ring that is equipped with a boundedly free $G$-grading. If $R_1$ has UGN, does it necessarily follow that $R$ has UGN?
\end{openquestion2}
 
Turning to projective gradings, we are confronted with a different pattern of behavior than in the free case. Specifically, we show, in Theorem 4.6 below, that there are non-UGN-rings with boundedly projective, full gradings by amenable groups and base rings with UGN. However, it is not known whether such examples exist for supramenable groups. Only in the limited case of a locally finite group can we obtain a definitive result about projective gradings; this is accomplished in Theorem~C.  

\begin{thmC} Let $R$ be a ring that is projectively graded by a locally finite group $G$. Then $R$ has UGN if and only if $R_1$ has UGN.
\end{thmC}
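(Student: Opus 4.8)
The plan is to reduce Theorem~C to the case of a finite group, using local finiteness, and to settle that case by Morita theory. The ``only if'' direction requires no work: as observed in the introduction, the inclusion $R_1\hookrightarrow R$ is a unitary ring homomorphism, so Lemma~2.7 gives that $R_1$ has UGN whenever $R$ does. It remains to prove the converse.

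\emph{The finite case.} I would first suppose that $G$ is finite. Then $R=\bigoplus_{g\in G}R_g$ is a finite direct sum of finitely generated projective $R_1$-modules, hence is itself a finitely generated projective $R_1$-module; moreover UGN is left--right symmetric (a failure of UGN amounts to a pair of rectangular matrices $A$ and $B$ over the ring with $AB$ an identity matrix, a condition unchanged under swapping $A$ and $B$), so we may assume this holds with respect to \emph{right} $R_1$-modules. Since $R_1=R_{1_G}$ is a direct summand of $R$ as a right $R_1$-module, $R$ is also a generator, and therefore a progenerator, as a right $R_1$-module. Consequently $E:=\operatorname{End}_{R_1}(R_{R_1})$ is Morita equivalent to $R_1$, and, since UGN is a Morita invariant, $E$ has UGN. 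Finally, left multiplication defines an injective unitary ring homomorphism $\lambda\colon R\to E$, $r\mapsto(x\mapsto rx)$, so Lemma~2.7 shows that $R$ has UGN.

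\emph{The locally finite case.} Now let $G$ be locally finite, and suppose toward a contradiction that $R$ does not have UGN. Then for some $n$ there is an $R$-module epimorphism $R^n\to R^{n+1}$; as $R^{n+1}$ is free this splits, and representing the epimorphism and a chosen section by matrices produces $A\in M_{(n+1)\times n}(R)$ and $B\in M_{n\times(n+1)}(R)$ with $AB=I_{n+1}$. Each of the finitely many entries of $A$ and $B$ is supported on a finite subset of $G$; let $T$ be the union of these supports, a finite set, and let $H:=\langle T\rangle$, which is finite because $G$ is locally finite. Then $R_H:=\bigoplus_{h\in H}R_h$ is a subring of $R$ which is projectively graded by the finite group $H$ with base ring $R_1$, and $A$ and $B$ have all their entries in $R_H$; hence $R_H$ fails to have UGN. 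This contradicts the finite case already treated, so $R$ has UGN.

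\emph{Main obstacle.} The heart of the argument is the finite case, and within it the two facts that $R$ is a progenerator over its base ring and that UGN is carried along the ensuing Morita equivalence; the left--right symmetry of UGN is what renders the one-sidedness of the definition of ``projectively graded'' harmless. The locally finite case is then a routine finite-support reduction of the sort used repeatedly for graded rings, and the ``only if'' direction is immediate from Lemma~2.7.
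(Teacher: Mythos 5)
Your proof is correct and follows essentially the same route as the paper: reduce to finite $G$ by a finite-support argument (the paper packages this as the direct-limit Lemma~2.10, since $R=\varinjlim_H R_H$ over the finite subgroups $H$ of $G$), then show $R$ is a progenerator in $\mathfrak M_{R_1}$, use Morita invariance of UGN to conclude that $\operatorname{End}_{R_1}(R)$ has UGN, and embed $R$ into that endomorphism ring. One small caution: your parenthetical justification of left--right symmetry is off as stated ($AB=I$ is not preserved by literally swapping $A$ and $B$; one must transpose and multiply in $R^{\mathrm{op}}$, giving $B^t\circ A^t=(AB)^t=I$), but since this is exactly the paper's Lemma~2.6, which you are entitled to cite, the argument is unaffected.
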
 

As shown in \S 3.1, Theorem C follows easily from the fact that the UGN property is a Morita invariant (see Corollary~2.3 below). 

The proofs of Theorem~B((iv)$\implies$(i)) and Theorem~4.6 employ the notion of a 
\emph{translation ring}, a generalization of a concept that was introduced by M.~Gromov \cite{Gromov} in 1993. If $R$ is a ring, $G$ a group, and $X$ a nonempty subset of $G$, then the \emph{translation ring} of $X$ with respect to $G$ over $R$, denoted $T_G(X,R)$, is the ring of all $X\times X$ matrices $M$ over $R$ for which there is a finite set $K\subseteq G$ such that $M(x,y)=0$ whenever $y\notin Kx$. Furthermore, we define $T_G(\emptyset, R)$ to be the zero ring for any group $G$ and ring $R$. The ring $T_G(G,R)$ is denoted simply by $T(G,R)$. For a finitely generated group $G$, the rings $T(G,R)$ are the translation rings introduced by Gromov and investigated further in \cite{Ara1}, \cite{Ara2}, \cite{Elek}, and \cite{Roe}. 

 Translation rings are important examples of group-graded rings; in the case where $X=G$, they are skew group rings (see Proposition~2.14 below), but, otherwise, they may be neither freely nor strongly graded (see the proof of Proposition~4.7 below). Our interest in translation rings stems primarily from the fact that they can be used to characterize amenable subsets of groups, as we show in Theorem~D. 

\begin{thmD} Let $G$ be a group and $X$ a subset of $G$. Then the following two statements are equivalent.

\begin{enumerate*}
\item The subset $X$ is amenable with respect to $G$.
\item For every ring $R$, the ring $T_G(X,R)$ has UGN if and only if $R$ has UGN. 
\end{enumerate*}
\end{thmD}

The implication (ii)$\implies$(i) in Theorem D for the case $X=G$ and $G$ finitely generated  was already shown by G. Elek \cite[\S 2]{Elek}. 
Our reasoning for that direction is an extension of his and relies on paradoxical decompositions of subsets of groups. We prove the converse with an argument similar to that employed for Theorem~A,  invoking Rosenblatt's generalization of F\o lner's condition. 
Theorem~D has the benefit of furnishing characterizations of both amenability and supramenability in terms of translation rings: a group $G$ is amenable if and only if, for every UGN-ring $R$, the ring $T(G,R)$ has UGN (Corollary 3.5); a group $G$ is supramenable if and only if, for every nonempty set $X\subseteq G$ and UGN-ring $R$, the ring $T_G(X,R)$ has UGN
(Corollary 3.6). 

In the final section of the paper, we investigate rings that fail to have UGN; such rings are said to have {\it bounded generating number} (\emph{BGN}). If a ring $R$ has BGN, then the smallest integer $n>0$ for which there is an $R$-module epimorphism $R^n\to R^{n+1}$ is called the {\it generating number} of $R$, denoted  ${\rm gn}(R)$. The generating number of a BGN-ring can also be characterized as the smallest integer $n>0$ such that every finitely generated $R$-module can be generated by $n$ elements (see Proposition~2.4((i)$\Longleftrightarrow$(iv)) below). 
If $R$ has UGN, then we define ${\rm gn}(R):=\aleph_0$. Thus, for an arbitrary ring $R$, the generating number of $R$ is the smallest cardinal number $c$ such that every finitely generated $R$-module can be generated by a subset of cardinality~$c$. 

As far as the authors are aware, the notion of the generating number of a ring has never been studied explicitly before. In the finite case, it is analogous to the concept of 
the type of a non-IBN-ring (see \cite{Leavitt}), and many of the same techniques that have been used to investigate types can be applied to the study of finite generating numbers. This is particularly true of the arguments adduced in \cite{Abrams} and \cite{Loc}; indeed, some of our results on finite generating numbers are proved using methods from those two papers. 

Our results about finite generating numbers are all of a negative nature, showing that Theorem~B, Corollary~A, and Theorem~C cannot be generalized in specific fashions.
The most significant one, Theorem~4.1, demonstrates that Theorem~B((i)$\implies$(ii)) and Corollary~A fail to extend to rings that have unboundedly free gradings. 

\begin{thm4.1} There exists a ring $R$ with a full and unboundedly free $\mathbb Z$-grading such that $R_0$ has UGN and $R$ has BGN.
\end{thm4.1}

In the statement of Theorem 4.1, notice that, since $\mathbb Z$ is an additive group with identity element $0$, the base ring of $R$ is written $R_0$. 

The ring described in the proof of Theorem 4.1 is not strongly graded. In our next theorem, we show that we can construct strongly $\mathbb Z$-graded examples that exhibit the same dissonance between the base ring and the entire ring. 

\begin{thm4.2} For any positive integer $n$, there exists a strongly $\mathbb Z$-graded ring $R$ such that $R_0$ has UGN and ${\rm gn}(R)=n$. 
\end{thm4.2}

The example given for Theorem 4.2 has a grading that is unboundedly projective and not free. In Theorem 4.6, we construct an example illustrating the same phenomenon that has a boundedly projective grading. This grading, however, is not strong.

\begin{thm4.6}  Let $G$ be an arbitrary elementary amenable group that fails to be locally virtually nilpotent. For every $n\in \mathbb Z^+$, there exists a ring $R$ graded by $G$ with the following properties.
\begin{enumerate*}
\item The $G$-grading on $R$ is full. 
\item For each $g\in G$, the $R_1$-module $R_g$ is a direct summand in $R_1$. 
\item $R_1$ has UGN.
\item ${\rm gn}(R)=n$.
\end{enumerate*}
\end{thm4.6}

Corollary A  says that, for a ring $R$ equipped with a boundedly free grading by a supramenable group $G$, we have 
\[{\rm gn}(R_1)=\aleph_0\Longleftrightarrow {\rm gn}(R)=\aleph_0.\]
One might, therefore, ask whether these two generating numbers always coincide for such rings. The answer, however, is negative, as shown by our next result.

\begin{prop4.8} Let $m, n$ be positive integers with $m\leq n$, and let $G$ be a group of order $n$.
Then there are a ring $R$ and a skew group ring $R\ast G$ such that ${\rm gn}(R)=n$ and ${\rm gn}(R\ast G)=m$. 
\end{prop4.8} 

Proposition 4.8 also serves to show that Theorem~C fails to extend to finite generating numbers. In Proposition 4.9 below, we prove the existence of another form of example confirming this fact for an arbitrary pair of positive integers. Unlike the ring $R\ast G$ described in Proposition 4.8, the one in Proposition~4.9 is graded by an arbitrary nontrivial finite group, although it is not possible in this case to make the ring a crossed product.  

\begin{prop4.9} Let $G$ be a nontrivial finite group and $m, n$ integers with $0<m\leq n$. Then there exists a ring $R$ strongly graded by $G$ such that
${\rm gn}(R_1)=n$ and ${\rm gn}(R)=m$. 
\end{prop4.9}

We conclude the paper by showing that the hypothesis that the grading is projective in Theorem C cannot be dropped. 

\begin{thm4.10} Let $G$ be a nontrivial finite group. Then, for any integer $n>0$, there exists a $G$-graded ring $R$ such that
$R_1$ has UGN and ${\rm gn}(R)=n$.
\end{thm4.10}

\begin{acknowledgements} {\rm The authors wish to thank Peter Kropholler for posing the question that became the subject of this paper, as well as for pointing out the possible relevance of Morita theory in this context. In addition, we are grateful to Nicolas Monod for suggesting Example 3.2.  Moreover, the first author thanks Blekinge Tekniska H\"ogskola in Karlskrona, Sweden for generously hosting him for part of the time during which the paper was written.

Finally, the authors wish to express their gratitude to the anonymous mathematicians who evaluated the paper on behalf of the journal for providing many salutary suggestions. In particular, one of the referees detected a subtle error in the proof of the main result in an earlier version of the paper, which led the authors to recognize the need for the hypothesis that the grading be bounded. 
 }
\end{acknowledgements}

 \section{Notation and preliminary results}

In this section, we present some elementary facts about UGN, generating numbers, and translation rings. In addition, we discuss amenable subsets of groups, particularly their descriptions in terms of paradoxical decompositions and the F\o lner condition for subsets of groups. We begin by describing the notation and terminology that we employ throughout the paper.
\newpage

\subsection{Notation and terminology}
\vspace{10pt}

\noindent {\it General notation.}
\vspace{5pt}

\noindent If $X$ is a set, then $\mathcal{P}(X)$ is the power set of $X$ and $|X|$ the cardinality of $X$.
\vspace{5pt}

\noindent $\mathbb Z^+$ is the set of positive integers.
\vspace{5pt}

\noindent  $\mathbb R^+$ is the set of positive real numbers.
\vspace{5pt}

\noindent For any objects $a, b$, 

\[\delta(a,b):=\begin{cases} 1 &\ \mbox{if}\ a=b\\
0 &\ \mbox{if}\ a\neq b. \end{cases}\]
For $i, j\in \mathbb Z^+$, we write $\delta_{ij}:=\delta(i,j)$.
\vspace{10pt}

\noindent {\it Groups, rings, and modules.}
\vspace{5pt}

\noindent The identity element of a group will be denoted by $1$.
\vspace{5pt}

\noindent The class of \emph{elementary amenable} groups is the smallest class of groups that contains all finite groups and all abelian groups and that is closed under forming extensions and direct limits. Note that every elementary amenable group is amenable (see \cite[Propositions 4.4.6, 4.5.5, 4.5.10, 4.6.1]{CSC}). 
\vspace{5pt}

\noindent All rings, subrings, and ring homomorphisms are assumed to be unital.
\vspace{5pt}

\noindent Let $R$ be a ring. Then $R^\ast$ denotes the group of units (invertible elements) in $R$, and $Z(R)$ is the center of $R$. Moreover, $R^{\rm op}$ represents the ring opposite to $R$, that is, the ring with the same set of elements as $R$ and same addition, but whose multiplication $\circ$ is defined by $r\circ s:=sr$ for all $r, s\in R^{\rm op}$. 
\vspace{5pt}

\noindent If $R$ is a ring and $G$ a group, then $RG$ denotes the group ring of $G$ over $R$. 
\vspace{5pt}

\noindent The term {\it module} without the modifier {\it left} will always mean right module. 
\vspace{5pt}

\noindent If $R$ is a ring, then $\mathfrak M_R$ is the category of $R$-modules. 
\vspace{5pt}

\noindent Let $R$ be a ring graded by a group $G$ and $r\in R$. If $r=\sum_{g\in G} r_g$ with $r_g\in R_g$ for all $g\in G$, then the {\it support} of $r$, denoted ${\rm Supp}(r)$, is defined by
\[{\rm Supp}(r):=\{g\in G\ :\ r_g\neq 0\}.\]
\vspace{5pt}

\noindent Let $S$ be a ring. Following G. M. Bergman \cite{Bergman, diamond}, we use the term \emph{$S$-ring} for a ring $R$ that is also an $S$-$S$-bimodule such that the following three conditions are satisfied for all $r, r'\in R$ and $s\in S$:
\begin{enumerate*}
\item $(rr')s=r(r's)$;
\item $(sr)r'=s(rr')$;
\item $(rs)r'=r(sr')$.
\end{enumerate*}

If $R$ and $R'$ are both $S$-rings, then an \emph{$S$-ring homomorphism} from $R$ to $R'$ is a ring homomorphism $R\to R'$ that is also an $S$-$S$-bimodule homomorphism. 

Note that, if $S$ is a commutative ring, then an \emph{$S$-algebra}, in the traditional sense, is an $S$-ring $R$ such that $sr=rs$ for all $s\in S$ and $r\in R$.  
\vspace{10pt}

\noindent {\it Crossed products, skew group rings, and twisted group rings.}
\vspace{5pt}

Let $G$ be a group and $R$ a ring. Let $\sigma:G\to {\rm Aut}(R)$ and $\omega:G\times G\to R^\ast$ be functions. Write $g\cdot r:=(\sigma(g))(r)$ for $g\in G$, $r\in R$. 
If $\sigma$ is a group homomorphism, then we call $\sigma$ an {\it action} of $G$ on $R$. If $\sigma$ is the trivial group homomorphism, it is referred to as the {\it trivial action}. 

The quadruple $(G,R,\sigma, \omega)$ is called a {\it crossed system} if the maps $\sigma$ and $\omega$ satisfy the following three conditions for any $g,h,k\in G$ and $r\in R$:
\begin{enumerate*}
\item $g\cdot (h\cdot r)=\omega(g,h)\, ((gh)\cdot r)\, (\omega(g,h))^{-1}$;
\item $\omega(g,h)\, \omega(gh,k) = (g\cdot \omega(h,k))\, \omega(g,hk)$;
\item $\omega(g,1_G)=\omega(1_G,g)=1_R$.
\end{enumerate*}

If $(G,R,\sigma, \omega)$ is a crossed system, then the set of formal sums $\sum_{g\in G} r_g\, g$, in which $r_g$ is an element of $R$ for all $g\in G$ and zero for all but finitely many $g\in G$, can be made into a ring by defining addition componentwise and multiplication according to the rule
\[(r_g\, g)(r_h\, h):=r_g(g\cdot r_h)\, \omega(g,h)\, (gh)\]

\noindent for all $g, h\in G$ and $r_g, r_h\in R$ (see \cite[Proposition 1.4.1]{NO}). This ring is a crossed product of $G$ over $R$ with respect to the obvious $G$-grading; it is denoted $R\ast^\sigma_\omega G$.
Moreover, every crossed product of $G$ over $R$ is isomorphic to a ring of this form (see \cite[Proposition 1.4.2]{NO}). 

An important special case of a crossed product arises when $\omega$ is the trivial map, meaning $\omega(g,h)=1_R$ for all $g, h\in G$. In this case, $(G,R,\sigma, \omega)$ is a crossed system if and only if $\sigma$ is an action of $G$ on $R$.
When $\omega$ is trivial and $\sigma$ an action, the crossed product $R\ast^\sigma_\omega G$ is referred to as the {\it skew group ring} of $G$ over $R$ that is associated to the action $\sigma$. 

A second important case is where $\sigma$ is the trivial action. In this case, a map $\omega:G\times G\to R^\ast$ satisfies (i) if and only if ${\rm Im}\ \omega\subseteq Z(R)^\ast$. Moreover, in the presence of this containment, the map $\omega$ fulfills (ii) if and only if $\omega$ is a $2$-cocycle $G\times G\to Z(R)^\ast$. If, in addition, (iii) holds, then we call $\omega$ a {\it normalized $2$-cocycle}. If $\sigma$ is the trivial action and $\omega$ a normalized $2$-cocycle $G\times G\to Z(R)^\ast$, then $R\ast^\sigma_\omega G$ is called the {\it twisted group ring} of $G$ over $R$ that is associated to the normalized $2$-cocycle $\omega$. Furthermore, the twisted group ring $R\ast^\sigma_\omega G$ is isomorphic to the group ring $RG$ if and only if the cohomology class of $\omega$ in $H^2(G,Z(R)^\ast)$ is zero (see \cite[\S 1.5, Exercise 10]{NO}).  
\vspace{10pt}

\noindent {\it Monoids.}
\vspace{5pt}

\noindent For any ring $R$, ${\rm Proj}(R)$ is the abelian monoid consisting of the isomorphism classes of finitely generated projective $R$-modules under the operation
 of forming direct sums.
\vspace{5pt}

\noindent  For an abelian monoid $M$, we write $x\leq y$ for $x, y\in M$ if there exists $z\in M$ such that $x+z=y$.
\vspace{5pt}

\noindent For any two positive integers $n$ and $k$, $C(n,k)$ denotes the monoid with presentation
\[C(n,k):=\langle a\ :\ (n+k)a=na\rangle.\]
\vspace{10pt}

\noindent {\it Matrices.}
\vspace{5pt}

\noindent If $X$ and $Y$ are nonempty sets, an $X\times X$ {\it matrix} $M$ with entries in $Y$ (or over $Y$) is a function $M:X\times X\to Y$. For an $m\times n$ matrix $M$ with $m, n\in \mathbb Z^+$, we write $M_{ij}:=M(i,j)$ for $i=1,\dots, m$ and $j=1,\dots, n$. 
\vspace{5pt}

\noindent The ring of $n \times n$ matrices with entries in a ring $R$ will be denoted $M_n(R)$.
\vspace{5pt}

\noindent The transpose of a (finite or infinite) matrix $A$ is denoted $A^t$.
\vspace{5pt}

\noindent Let $G$ be a group, $R$ a ring, and $X$ a nonempty subset of $G$. If $M, N\in T_G(X,R)$, then the product $MN$ is defined by
\[(MN)(x,y):=\sum_{z\in X} M(x,z)N(z,y)\]
for $x,y\in X$. Notice that this sum is defined because it has only finitely many nonzero terms.

\subsection{UGN and generating numbers}

 In this subsection, we introduce the reader to some elementary facts about the UGN property and generating numbers. The most obvious examples of UGN-rings are nonzero division rings and nonzero finite rings. Other examples may be obtained by using the fact that every ring $R$ admitting a ring homomorphism from $R$ to a UGN-ring must have UGN (see Lemma 2.7). In particular, every nonzero commutative ring has UGN since its quotient by a maximal ideal is a field.  
 
 The UGN property was introduced by Cohn \cite{CohnIBN} in connection with the related, but currently much better known, properties of stable finiteness and having invariant basis number. A ring $R$ is said to be \emph{stably finite} (\emph{SF}) if, for every positive integer $n$, every $R$-module epimorphism $R^n\to R^n$ must be an isomorphism. A ring $R$ has \emph{invariant basis number} (\emph{IBN}) if, for any two positive integers $m$ and $n$, $R^m\cong R^n$ as $R$-modules only if $m=n$. It is easy to see that the following implications hold for any nonzero ring:
 \[\mbox{SF}\ \Longrightarrow\ \mbox{UGN}\ \Longrightarrow \mbox{IBN}.\]
 Furthermore, neither of these two implications can be reversed: for the first, see, for instance, \cite[p. 11]{Lam}; for the second, see \cite[\S 5]{CohnIBN} and \cite[Example 3.19]{AbramsNP}. The precise relationship between SF and UGN is made clear by P. Malcolmson's result \cite{Malcolm} that a ring has UGN if and only if it has a nonzero SF quotient
(see also \cite[Theorem 1.26]{Lam}). 
 
 Crucial to our understanding of generating numbers of rings is the following lemma.  

\begin{lemma} Let $R$ be a ring, $A$ an $R$-module, and $n\in \mathbb Z^+$. If there is an $R$-module epimorphism $A^n\to A^{n+1}$, then there is an $R$-module epimorphism $A^n\to A^m$ for every $m\in \mathbb Z^+$.
\end{lemma}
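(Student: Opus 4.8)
The statement to prove is: if $A$ is an $R$-module, $n\in\mathbb Z^+$, and there is an epimorphism $A^n\twoheadrightarrow A^{n+1}$, then there is an epimorphism $A^n\twoheadrightarrow A^m$ for every $m\in\mathbb Z^+$. The plan is a straightforward induction on $m$. The cases $m\le n$ are trivial (project onto the first $m$ coordinates, or note $A^n\twoheadrightarrow A^{n+1}\twoheadrightarrow A^n$ composed with a projection), so the content is in pushing from $m$ to $m+1$ once $m\ge n$.

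\medskip

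\textbf{Key steps.} First I would fix the hypothesized epimorphism $\varphi\colon A^n\to A^{n+1}$. For the inductive step, suppose we already have an epimorphism $\psi\colon A^n\to A^m$ with $m\ge n$. Write $A^m=A^{m-n}\oplus A^n$ (legitimate since $m\ge n$). The idea is to apply $\varphi$ in the last $n$ coordinates: the map
\[
\mathrm{id}_{A^{m-n}}\oplus\,\varphi\;\colon\;A^{m-n}\oplus A^n\longrightarrow A^{m-n}\oplus A^{n+1}=A^{m+1}
\]
is an epimorphism, being a direct sum of two epimorphisms. Precomposing with $\mathrm{id}_{A^{m-n}}\oplus\,(\text{a splitting-free version: just use }\psi\text{ suitably})$—more precisely, one first builds an epimorphism $A^n\to A^{m-n}\oplus A^n$ and then composes. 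Concretely: from $\psi\colon A^n\twoheadrightarrow A^m=A^{m-n}\oplus A^n$ we get $A^n\xrightarrow{\psi}A^{m-n}\oplus A^n\xrightarrow{\mathrm{id}\oplus\varphi}A^{m-n}\oplus A^{n+1}=A^{m+1}$, a composite of epimorphisms, hence an epimorphism $A^n\twoheadrightarrow A^{m+1}$. This closes the induction.

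\medskip

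\textbf{Where the care is needed.} There is essentially no deep obstacle here; the only thing to be careful about is the bookkeeping of which direct-sum decomposition of $A^m$ one uses, and making sure $m\ge n$ so that $A^{m-n}$ makes sense—hence the need to dispose of the small cases $m<n$ separately at the base of the induction. One should also note at the outset that $A^n\twoheadrightarrow A^{n+1}$ together with the projection $A^{n+1}\twoheadrightarrow A^n$ gives an epimorphism $A^n\twoheadrightarrow A^n$, which combined with coordinate projections handles all $m\le n$ uniformly. So the write-up is: (1) handle $m\le n$; (2) induct on $m\ge n$ using the composite $A^n\xrightarrow{\psi}A^{m-n}\oplus A^n\xrightarrow{\mathrm{id}\oplus\varphi}A^{m+1}$. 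The whole argument is formal and uses nothing about $R$ or $A$ beyond the existence of finite direct sums and the fact that a direct sum of epimorphisms is an epimorphism.
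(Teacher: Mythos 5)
Your proposal is correct and is essentially the paper's own argument: both decompose $A^m$ as $A^{m-n}\oplus A^n$ (the paper writes $A^{n+k-1}\cong A^n\oplus A^{k-1}$), apply the given epimorphism $\varphi$ on the $A^n$ factor and the identity on the complement, and compose with $\psi$ to close the induction. The only difference is cosmetic bookkeeping of which factor carries $\varphi$.
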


\begin{proof} Let $\phi:A^n\to A^{n+1}$ be an $R$-module epimorphism. We will show by induction on $k$ that there is an $R$-module epimorphism $A^n\to A^{n+k}$ for every $k\in \mathbb Z^+$. Since there is clearly an $R$-module epimorphism $A^n\to A^m$ when $m\leq n$, this will prove the lemma. 
Suppose that there is an $R$-module epimorphism $\psi:A^n\to A^{n+k-1}$ for $k>1$. Then we have the chain of $R$-module epimorphisms
\[A^n\stackrel{\psi}{\longrightarrow} A^{n+k-1}\stackrel{\cong}{\longrightarrow} A^n\oplus A^{k-1}\stackrel{\xi}{\longrightarrow} A^{n+1}\oplus A^{k-1}\stackrel{\cong}{\longrightarrow} A^{n+k},\]
where $\xi(a,b):=(\phi(a),b)$ for all $a\in A^n$ and $b\in A^{k-1}$. The composition of the maps in the chain is an $R$-module epimorphism $A^n\to A^{n+k}$. 
\end{proof}

Lemma 2.1 allows us to establish several alternative characterizations of rings with BGN. These are all well known and can be found, stated less formally, in the works of Cohn (see, for instance, \cite[\S 1.4]{CohnSkew}). 

\begin{proposition} For a ring $R$, the following five statements are equivalent.
\begin{enumerate*}
\item $R$ has BGN.
\item There is an $R$-module epimorphism $R^n\to R^m$ for some $m, n\in \mathbb Z^+$ with $n<m$. 
\item There is an integer $n>0$ such that there is an $R$-module epimorphism $R^n\to R^m$ for every $m\in \mathbb Z^+$. 
\item There is an integer $n>0$ such that every finitely generated $R$-module is a homomorphic image of $R^n$.
\item There is a finitely generated $R$-module $M$ such that every finitely generated $R$-module is a homomorphic image of $M$.
\end{enumerate*}
\end{proposition}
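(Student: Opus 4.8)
The plan is to establish the five statements as equivalent by proving the cycle of implications
\[\text{(i)}\Longrightarrow\text{(iii)}\Longrightarrow\text{(iv)}\Longrightarrow\text{(v)}\Longrightarrow\text{(ii)}\Longrightarrow\text{(i)},\]
the workhorse throughout being Lemma 2.1 applied with $A=R$, together with the elementary facts that a homomorphic image of a finitely generated module is finitely generated and that $R^n$ is finitely generated for every $n\in\mathbb Z^+$.

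For (i)$\Longrightarrow$(iii): by definition, $R$ has BGN precisely when there exists some $n\in\mathbb Z^+$ admitting an $R$-module epimorphism $R^n\to R^{n+1}$; feeding this epimorphism into Lemma 2.1 with $A=R$ immediately yields an epimorphism $R^n\to R^m$ for every $m\in\mathbb Z^+$, which is exactly (iii). For (iii)$\Longrightarrow$(iv): let $n$ be as in (iii), let $M$ be an arbitrary finitely generated $R$-module, say generated by $k$ elements, so there is an epimorphism $R^k\to M$. If $k\leq n$, then $R^n$ surjects onto $R^k$ by a coordinate projection, and composing gives an epimorphism $R^n\to M$; if $k>n$, then (iii) supplies an epimorphism $R^n\to R^k$, and composing with $R^k\to M$ again does the job. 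Hence every finitely generated $R$-module is a homomorphic image of $R^n$, so (iv) holds with this $n$. For (iv)$\Longrightarrow$(v): simply take $M:=R^n$, which is finitely generated.

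For (v)$\Longrightarrow$(ii): fix $M$ as in (v). Since $M$ is finitely generated, there is an epimorphism $R^n\to M$ for some $n\in\mathbb Z^+$; since $R^{n+1}$ is finitely generated, (v) provides an epimorphism $M\to R^{n+1}$. Composing these produces an epimorphism $R^n\to R^{n+1}$, and since $n<n+1$ this is an instance of (ii). For (ii)$\Longrightarrow$(i): given an epimorphism $R^n\to R^m$ with $n<m$, compose it with the coordinate projection $R^m\to R^{n+1}$ (legitimate because $m\geq n+1$) to obtain an epimorphism $R^n\to R^{n+1}$, whence $R$ has BGN by definition. This closes the cycle.

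I do not anticipate a real obstacle here: the argument is entirely formal once Lemma 2.1 is in hand, since that lemma is precisely what upgrades a single epimorphism $R^n\to R^{n+1}$ into the uniform family of surjections $R^n\to R^m$ needed for (iii) and (iv). The only points warranting a moment's care are the two-case split ($k\leq n$ versus $k>n$) in the proof of (iii)$\Longrightarrow$(iv) and the routine bookkeeping that finite direct sums and quotients of finitely generated modules stay finitely generated; these are all standard.
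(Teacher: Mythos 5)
Your proof is correct and rests on the same single nontrivial ingredient as the paper's, namely Lemma 2.1 applied with $A=R$; the paper merely organizes the implications differently (proving (i)$\implies$(ii), (ii)$\implies$(iii) via Lemma 2.1, (iii)$\implies$(iv), (iv)$\implies$(i), and (iv)$\iff$(v), declaring all but the Lemma 2.1 step trivial), whereas you run a single cycle and write out the formal steps explicitly. The content is the same, so no further comment is needed.
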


\begin{proof} The equivalence of (iv) and (v) is trivial, as are the implications (i)$\implies$(ii), (iii)$\implies$(iv), and (iv)$\implies$(i). Moreover, the implication (ii)$\implies$(iii) follows from Lemma 2.1.
\end{proof}

Statement (v) in Proposition 2.2 is significant in that it is expressed in purely categorical terms. As a consequence, its equivalence to (i) implies that BGN, and perforce UGN, is Morita invariant, thus furnishing a succinct, positive answer to a question in an exercise by Cohn \cite[Exercise 9, \S 0.1]{CohnFIR}. For the benefit of the reader, we provide all the details of this argument in the proof of Corollary 2.3 below. An alternative proof that focuses instead on the structure of the monoid ${\rm Proj}(R)$ for a ring $R$ was constructed by P. Ara and appears in \cite[\S 2]{AbramsNP}. 

\begin{corollary} Let $R$ and $S$ be rings that are Morita equivalent. Then $R$ has UGN if and only if $S$ has UGN. 
\end{corollary}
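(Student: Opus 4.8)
The plan is to read off the corollary from Proposition~2.2, specifically from the equivalence (i)$\Longleftrightarrow$(v), which expresses the BGN property (equivalently, the failure of UGN) purely in terms of the module category $\mathfrak M_R$: namely, $R$ has BGN if and only if there is a finitely generated $R$-module $M$ such that every finitely generated $R$-module is a homomorphic image of $M$. Since $R$ and $S$ are Morita equivalent, I fix an equivalence of categories $F\colon \mathfrak M_R\to \mathfrak M_S$ with quasi-inverse $G$. Because the hypothesis is symmetric in $R$ and $S$, it suffices to show that if $R$ has BGN then $S$ has BGN.

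The argument rests on two properties of $F$. \emph{First}, $F$ preserves homomorphic images: a morphism in a module category is an epimorphism in the categorical sense precisely when it is surjective, and any equivalence preserves categorical epimorphisms, so $F$ sends a surjection of $R$-modules to a surjection of $S$-modules; the same applies to $G$. \emph{Second}, $F$ preserves and reflects the property of being finitely generated. One clean way to see this is that finite generation is a categorical property — $M$ is finitely generated if and only if, whenever $M$ is the union of a directed family of subobjects, some member of the family already equals $M$ — and subobjects and their directed unions are notions transported by any equivalence. Alternatively, one argues concretely with the progenerator $P:=F(R)$: then $F(R^n)=P^n$, which is finitely generated over $S$ since $P$ is, so every finitely generated $R$-module (being a quotient of some $R^n$) is carried by $F$ to a finitely generated $S$-module; conversely $S$ is a direct summand, hence a quotient, of some $P^k$, so each finitely generated $S$-module $N$ is a quotient of some $P^{kn}=F(R^{kn})$, whence $G(N)$ is a quotient of $G(F(R^{kn}))\cong R^{kn}$ and is therefore finitely generated over $R$, with $N\cong F(G(N))$.

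Granting these two properties, the deduction is short. Assume $R$ has BGN and pick, via Proposition~2.2((i)$\Longrightarrow$(v)), a finitely generated $R$-module $M$ through which every finitely generated $R$-module is a homomorphic image. Then $F(M)$ is a finitely generated $S$-module. Given any finitely generated $S$-module $N$, the module $M':=G(N)$ is a finitely generated $R$-module with $F(M')\cong N$, so $M'$ is a homomorphic image of $M$, and applying $F$ shows $N\cong F(M')$ is a homomorphic image of $F(M)$. Thus $F(M)$ verifies statement~(v) for $S$, and Proposition~2.2((v)$\Longrightarrow$(i)) yields that $S$ has BGN. Exchanging $R$ and $S$ gives the converse, so $R$ has BGN if and only if $S$ does, equivalently $R$ has UGN if and only if $S$ has UGN.

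I expect the only point requiring genuine care to be the second property above — that the Morita equivalence restricts to an equivalence between the full subcategories of finitely generated modules; everything else (the behavior on epimorphisms and the final bookkeeping) is routine. I would therefore either invoke the standard fact about Morita equivalences preserving finite generation, or spell out the categorical characterization of finite generation, whichever fits the paper's level of detail best.
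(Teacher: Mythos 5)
Your proof is correct and follows essentially the same route as the paper: both reduce to Proposition~2.2((i)$\Longleftrightarrow$(v)), transport the "universal" finitely generated module $M$ across the equivalence, and rely on the facts that category equivalences preserve surjectivity and finite generation (which the paper simply cites from Anderson--Fuller, where you sketch justifications). No substantive differences.
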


 \begin{proof} The argument is based on the fact that category equivalences preserve both the property of surjectivity for homomorphisms and that of finite generation for modules (see, for example, \cite[Proposition 21.2]{Anderson-Fuller} and \cite[Proposition 21.8]{Anderson-Fuller}, respectively). Notice that it suffices to prove that, if $R$ has BGN, then $S$ has BGN.  Assume that $R$ has BGN.  According to Proposition~2.2((i)$\implies$(v)), there is a finitely generated $R$-module $M$ such that every finitely generated $R$-module is a homomorphic image of $M$. Let $F:\mathfrak{M}_R\to \mathfrak{M}_S$ be a category equivalence. 
 This means that there is a functor $G:\mathfrak{M}_S\to \mathfrak{M}_R$ such that $FG$ and $GF$ are naturally equivalent to the identity functors $\mathfrak{M}_S\to \mathfrak{M}_S$ and $\mathfrak{M}_R\to \mathfrak{M}_R$, respectively. Furthermore, the $S$-module $F(M)$ is finitely generated. 
 
 We will now show that every finitely generated $S$-module is a homomorphic image of $F(M)$. This will imply, by Proposition~2.2((v)$\implies$(i)), that $S$ has BGN. 
 Let $A$ be a finitely generated $S$-module. Since $G(A)$ is a finitely generated $R$-module, we have an $R$-module epimorphism $\phi:M\to G(A)$. Thus the map $F(\phi):F(M)\to FG(A)$ is an $S$-module epimorphism. But $FG(A)\cong A$ as $S$-modules, yielding an $S$-module epimorphism $F(M)\to A$. 
\end{proof}

\begin{remark}{\rm The authors have succeeded in constructing two other short proofs that UGN is Morita invariant. A second approach is to first establish the following characterization of UGN: a ring $R$ has UGN if and only if, for every progenerator $P$ in $\mathfrak{M}_R$ and integer $n>0$, there is no $R$-module epimorphism $P^n\to P^{n+1}$. The Morita invariance of UGN then follows from the fact that category equivalences preserve progenerators and epimorphisms.}
\end{remark}

Reasoning similar to that employed in proving Proposition 2.2 can be invoked to establish the following set of equivalences concerning the generating number of a ring.

\begin{proposition} Let $R$ be a ring and $n\in \mathbb Z^+$. Then the following four statements are equivalent. 
\begin{enumerate*}
\item ${\rm gn}(R)=n$.
\item The integer $n$ is the smallest positive integer such that  there is an $R$-module epimorphism $R^n\to R^{m}$ for some integer $m>n$.
\item The integer $n$ is the smallest positive integer such that there is an $R$-module epimorphism $R^n\to R^{m}$ for every $m\in \mathbb Z^+$.
\item The integer $n$ is the smallest positive integer such that every finitely generated $R$-module is a homomorphic image of $R^n$. 
\end{enumerate*}
\end{proposition}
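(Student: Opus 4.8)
The plan is to reduce all four statements to a single assertion about one fixed subset of $\mathbb{Z}^+$, in close analogy with the proof of Proposition 2.2. For $j\in\{1,2,3,4\}$ let $A_j\subseteq\mathbb{Z}^+$ be the set of those $k$ characterized, respectively, by: $k\in A_1$ iff there is an $R$-module epimorphism $R^k\to R^{k+1}$; $k\in A_2$ iff there is an $R$-module epimorphism $R^k\to R^m$ for some integer $m>k$; $k\in A_3$ iff there is an $R$-module epimorphism $R^k\to R^m$ for every $m\in\mathbb{Z}^+$; $k\in A_4$ iff every finitely generated $R$-module is a homomorphic image of $R^k$. Since $\mathbb{Z}^+$ is well ordered, to say that a subset of it \emph{has a smallest element $n$} is to say that it is nonempty with minimum $n$. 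Hence, once I have shown $A_1=A_2=A_3=A_4$, statements (ii), (iii) and (iv) all say exactly that $n$ is the smallest element of $A_1$, which, by the definition of ${\rm gn}(R)$ together with the observation that $R$ has BGN if and only if $A_1\neq\emptyset$, is precisely statement (i).

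It thus remains to prove $A_1=A_2=A_3=A_4$. The inclusions $A_3\subseteq A_1$ and $A_1\subseteq A_2$ are trivial (take $m=k+1$). For $A_2\subseteq A_1$, compose an epimorphism $R^k\to R^m$ having $m>k$ with the projection $R^m\to R^{k+1}$ onto the first $k+1$ coordinates. The inclusion $A_1\subseteq A_3$ is the amplification furnished by Lemma 2.1 applied with $A=R$: an epimorphism $R^k\to R^{k+1}$ produces an epimorphism $R^k\to R^m$ for every $m$. For $A_3\subseteq A_4$, write a given finitely generated module $M$ as an image of $R^m$ for a suitable $m$ and precompose with an epimorphism $R^k\to R^m$; and $A_4\subseteq A_3$ is immediate on taking $M=R^m$. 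Chaining these inclusions yields equality of all four sets.

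I do not anticipate any real obstacle: the only nontrivial ingredient is Lemma 2.1, which handles the step $A_1\subseteq A_3$, and everything else is a matter of composing surjections and projecting onto coordinate subsums. The single point deserving explicit mention is the interface with the definition of ${\rm gn}(R)$, which is stated directly only for BGN-rings: if $R$ has UGN then $A_1=\emptyset$, so none of (i)--(iv) can hold for any $n\in\mathbb{Z}^+$, and the asserted equivalence persists vacuously.
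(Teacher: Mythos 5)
Your proof is correct and follows essentially the same route the paper intends: the paper gives no separate argument for this proposition, merely noting that the reasoning of Proposition 2.2 carries over, and your chain of inclusions $A_1=A_2=A_3=A_4$ (with Lemma 2.1 supplying the only nontrivial step, and the rest being compositions with coordinate projections) is precisely that reasoning, packaged so that the ``smallest positive integer'' clauses in (ii)--(iv) all refer to the minimum of one and the same subset of $\mathbb{Z}^+$. Your explicit remark that the equivalence holds vacuously when $R$ has UGN is a sensible finishing touch.
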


\begin{remark}{\rm We point out that the generating number of a ring fails to be a Morita invariant: see Lemma 2.9 and Proposition 2.12 below.}
\end{remark}

The next lemma is useful in that it allows us to formulate the conditions in Proposition 2.4, as well as (i)-(iii) in Proposition 2.2, in terms of matrices.  Because the property is well known and the proof is very straightforward, we leave it to the reader.
 
\begin{lemma} Let $R$ be a ring and $m, n\in \mathbb Z^+$. Then there is an $R$-module epimorphism $R^n\to R^m$ if and only if there are an $m \times n$ matrix $A$ with entries in $R$ and an $n \times m$ matrix $B$ with entries in $R$ such that $AB=I_m$.
\end{lemma}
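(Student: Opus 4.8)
The plan is to translate the statement into matrix language via the standard dictionary between homomorphisms of free right $R$-modules and matrices over $R$. Regard $R^n$ and $R^m$ as the right $R$-modules of column vectors over $R$ of the indicated lengths, with $R$ acting by entrywise right multiplication. The first step is to record the elementary observation that, for any $m\times n$ matrix $A$ over $R$, the map $x\mapsto Ax$ is an $R$-module homomorphism $R^n\to R^m$ (right $R$-linearity amounts to the identity $A(xr)=(Ax)r$, which holds by associativity and distributivity in $R$), and that conversely every $R$-module homomorphism $f\colon R^n\to R^m$ arises this way: reading off the images $f(e_1),\dots,f(e_n)$ of the standard basis vectors of $R^n$ as the columns of an $m\times n$ matrix $A$ gives $f(x)=Ax$ for all $x$. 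This correspondence carries composition to matrix multiplication: the composite of $x\mapsto Bx$ followed by $x\mapsto Ax$ is $x\mapsto(AB)x$.

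For the implication $(\Leftarrow)$, suppose $A$ is $m\times n$, $B$ is $n\times m$, and $AB=I_m$. Then $x\mapsto Bx$ is an $R$-module homomorphism $R^m\to R^n$, and its composite with $x\mapsto Ax$ is $x\mapsto (AB)x=x$, the identity on $R^m$. Hence $x\mapsto Ax$ has a right inverse and is therefore surjective, i.e. an $R$-module epimorphism $R^n\to R^m$ (recall that in $\mathfrak{M}_R$ an epimorphism is the same as a surjective homomorphism).

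For the implication $(\Rightarrow)$, let $f\colon R^n\to R^m$ be an $R$-module epimorphism and let $A$ be the $m\times n$ matrix over $R$ with $f(x)=Ax$. For each $i\in\{1,\dots,m\}$, surjectivity of $f$ provides a column vector $b_i\in R^n$ with $f(b_i)=e_i$, the $i$-th standard basis vector of $R^m$. Letting $B$ be the $n\times m$ matrix whose $i$-th column is $b_i$, the $i$-th column of $AB$ equals $Ab_i=f(b_i)=e_i$; hence $AB=I_m$.

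There is essentially no real obstacle here. The only point requiring care is to keep the one-sidedness conventions consistent — homomorphisms of free right $R$-modules must be made to correspond to left multiplication by matrices, so that right $R$-linearity of $f$ is matched by $A(xr)=(Ax)r$ — and to remember that an epimorphism in a module category coincides with a surjective homomorphism. With those conventions fixed, each direction is a one- or two-line verification, which is presumably why the paper leaves it to the reader.
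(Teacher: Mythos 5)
Your proof is correct and is exactly the standard argument the paper has in mind when it leaves this lemma to the reader: the dictionary between homomorphisms of free right $R$-modules and left multiplication by matrices, with surjectivity yielding the right inverse $B$ columnwise. Nothing to add.
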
 

Lemma 2.5 enables us to establish the following four fundamental lemmas about generating numbers (Lemmas 2.6, 2.7, 2.9, and 2.10). The special cases of Lemmas 2.6, 2.7, and 2.10 for infinite generating numbers are all well known (see \cite{CohnIBN}, \cite[\S 0.1]{CohnFIR}, and \cite[\S 1C]{Lam}, respectively). 

\begin{lemma} Let $R$ be a ring. Then ${\rm gn}(R)={\rm gn}(R^{\rm op})$. In particular, $R$ has UGN if and only if $R^{\rm op}$ has UGN. 
\end{lemma}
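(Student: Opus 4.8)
The plan is to reduce everything to the matrix criterion of Lemma 2.5 and then exploit transposition. Concretely, I would show that for any $m,n\in\mathbb{Z}^+$ there is an $R$-module epimorphism $R^n\to R^m$ if and only if there is an $R^{\rm op}$-module epimorphism $(R^{\rm op})^n\to(R^{\rm op})^m$. Granting this, the sets of pairs $(n,m)$ admitting such an epimorphism coincide for $R$ and for $R^{\rm op}$, so the equality $\mathrm{gn}(R)=\mathrm{gn}(R^{\rm op})$ follows at once from the definition of the generating number together with Proposition~2.4, and the UGN assertion is the special case of Proposition~2.2.

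For the key equivalence, suppose there is an $R$-module epimorphism $R^n\to R^m$. By Lemma~2.5 there are an $m\times n$ matrix $A$ and an $n\times m$ matrix $B$, both over $R$, with $AB=I_m$. Passing to transposes, $B^t$ is an $m\times n$ matrix and $A^t$ an $n\times m$ matrix. The one point requiring care is that transposition does not commute with matrix multiplication over a noncommutative ring; rather, a direct computation of entries shows that $(AB)^t$ equals the product $B^t A^t$ \emph{computed in the matrix ring over $R^{\rm op}$}, since the rule $r\circ s=sr$ defining the opposite multiplication exactly absorbs the order-reversal. Hence, viewing $B^t$ and $A^t$ as matrices over $R^{\rm op}$, their product is $I_m^t=I_m$. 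Applying Lemma~2.5 to the ring $R^{\rm op}$, with $B^t$ in the role of ``$A$'' and $A^t$ in the role of ``$B$'', yields an $R^{\rm op}$-module epimorphism $(R^{\rm op})^n\to(R^{\rm op})^m$.

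The converse direction is identical once we note that $(R^{\rm op})^{\rm op}=R$: the implication just proved, applied with $R^{\rm op}$ in place of $R$, gives the reverse inclusion of the sets of admissible pairs $(n,m)$. I do not anticipate any genuine obstacle; the only thing to be vigilant about is the bookkeeping for the order-reversal under transposition, which is precisely what forces $R^{\rm op}$ (rather than $R$ itself) to be the natural target of the transposed matrices.
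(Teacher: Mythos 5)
Your proposal is correct and follows essentially the same route as the paper: both reduce to the matrix criterion of Lemma 2.5 and observe that transposition converts an identity $AB=I_m$ over $R$ into $B^tA^t=I_m$ when the product is computed over $R^{\rm op}$, then use $(R^{\rm op})^{\rm op}=R$ for the reverse direction. The only cosmetic difference is that you state the two-sided equivalence of epimorphism pairs up front, while the paper proves the single inequality ${\rm gn}(R^{\rm op})\leq {\rm gn}(R)$ and invokes the symmetry.
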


\begin{proof} If $A$ is an $m\times n$ matrix with entries in $R$ and $B$ an $n\times p$ matrix with entries in $R$, then we employ $A\circ B$ to denote the product of $A$ and $B$ where the entries are multiplied in $R^{\rm op}$. This means 
\begin{equation}A\circ B=(B^tA^t)^t.\end{equation}

Since $(R^{\rm op})^{\rm op}=R$, we only need to show that ${\rm gn}(R^{\rm op})\leq {\rm gn}(R)$. This is plainly true if ${\rm gn}(R)=\aleph_0$, so we assume that $k:={\rm gn}(R)$ is finite. By Proposition 2.4((i)$\implies$(ii)) and Lemma 2.5, there are an integer $l>k$, an $l\times k$ matrix $P$ with entries in $R$, and a $k\times l$ matrix $Q$ over $R$ such that $PQ=I_l$. From (2.1) we obtain $Q^t\circ P^t=(PQ)^t=I_l$. Therefore, appealing to Lemma 2.5 and Proposition 2.4((i)$\implies$(ii)), we deduce ${\rm gn}(R^{\rm op})\leq k$.
\end{proof}

\begin{remark}{\rm Lemma 2.6 means that the UGN and generating number notions are left-right symmetric; in other words, any of their characterizations in terms of right $R$-modules
can also be expressed using left $R$-modules.}
\end{remark}

\begin{lemma} Let $R$ and $S$ be rings such that there is a ring homomorphism $\phi:R\to S$. Then ${\rm gn}(R)\geq {\rm gn}(S)$. In particular, if $S$ has UGN, then so does $R$.
\end{lemma}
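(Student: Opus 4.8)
The plan is to reduce everything to Lemma~2.5, which converts the existence of an $R$-module epimorphism $R^n\to R^m$ into the existence of an $m\times n$ matrix $A$ and an $n\times m$ matrix $B$ over $R$ with $AB=I_m$. The only substantive observation needed is that a (unital) ring homomorphism $\phi:R\to S$, applied entrywise, carries $m\times n$ matrices over $R$ to $m\times n$ matrices over $S$ in a way that respects matrix multiplication and sends the $m\times m$ identity over $R$ to the $m\times m$ identity over $S$. (Unitality of $\phi$, which is part of the paper's standing conventions, is precisely what guarantees $\phi(I_m)=I_m$.) Consequently, if $AB=I_m$ over $R$, then $\phi(A)\phi(B)=I_m$ over $S$.

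First I would record the resulting transfer principle: by Lemma~2.5, an $R$-module epimorphism $R^n\to R^m$ produces matrices $A,B$ over $R$ with $AB=I_m$; applying $\phi$ entrywise and invoking Lemma~2.5 once more, now over $S$, yields an $S$-module epimorphism $S^n\to S^m$ for the same pair $(m,n)$.

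Next I would establish the inequality ${\rm gn}(R)\geq {\rm gn}(S)$. If ${\rm gn}(R)=\aleph_0$ there is nothing to prove, since ${\rm gn}(S)\leq\aleph_0$ always. So assume $n:={\rm gn}(R)$ is finite. By Proposition~2.4((i)$\Longrightarrow$(ii)) there is an integer $m>n$ and an $R$-module epimorphism $R^n\to R^m$; by the transfer principle there is an $S$-module epimorphism $S^n\to S^m$ with $m>n$, whence $S$ has BGN and, by Proposition~2.4((ii)), ${\rm gn}(S)\leq n={\rm gn}(R)$. Finally, for the ``in particular'' clause: if $S$ has UGN then ${\rm gn}(S)=\aleph_0$, so ${\rm gn}(R)\geq\aleph_0$ forces ${\rm gn}(R)=\aleph_0$, i.e.\ $R$ has UGN (equivalently, one argues the contrapositive directly from the transfer principle).

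I do not expect any real obstacle here: the entire argument is a one-line matrix identity packaged through Lemma~2.5, and the subtlest point is merely bookkeeping with the dimensions $m>n$ and the appeal to the minimality clauses in Proposition~2.4.
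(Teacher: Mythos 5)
Your proposal is correct and follows essentially the same route as the paper: both arguments dispose of the $\aleph_0$ case trivially, then use Proposition~2.4((i)$\Longrightarrow$(ii)) and Lemma~2.5 to produce matrices $A,B$ over $R$ with $AB=I$, apply $\phi$ entrywise, and invoke Lemma~2.5 and Proposition~2.4 again over $S$. Your explicit remark that unitality of $\phi$ is what guarantees $\phi(I_m)=I_m$ is a nice touch the paper leaves implicit.
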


\begin{proof} For any $m\times n$ matrix $M$ with entries in $R$, we let $_\phi M$ denote the $m\times n$ matrix with entries in $S$ such that $(_\phi M)_{ij}:=\phi(M_{ij})$ for $i=1,\dots, m$ and $j=1,\dots, n$. The desired inequality obviously holds if ${\rm gn}(R)=\aleph_0$; hence we suppose that ${\rm gn}(R)$ is finite and write $k:={\rm gn}(R)$. By Proposition 2.4((i)$\implies$(ii)) and Lemma~2.5, there are an integer $l>k$, an $l\times k$ matrix $A$ over $R$, and a $k\times l$ matrix $B$ over $R$ such that $AB=I_l$. This implies $(_\phi A)(_\phi B)=I_l$. It follows, then, from Lemma~2.5 and Proposition~2.4((i)$\implies$(ii)) that ${\rm gn}(S)\leq k$. 
\end{proof}

Lemma 2.7 furnishes the following property of group rings. 

\begin{corollary} Let $R$ be a ring and $G$ a group. Then ${\rm gn}(RG)={\rm gn}(R)$. In particular, $RG$ has UGN if and only if $R$ has UGN.
\end{corollary}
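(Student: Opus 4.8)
The plan is to apply Lemma~2.7 twice, exploiting the two canonical unital ring homomorphisms that connect $R$ with its group ring $RG$. No new machinery is needed; the whole point is that $R$ both embeds into $RG$ and is a quotient of $RG$.

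First I would note that the inclusion $\iota\colon R\to RG$, $r\mapsto r\cdot 1_G$, is a unital ring homomorphism. Applying Lemma~2.7 to $\iota$ gives the inequality ${\rm gn}(R)\geq {\rm gn}(RG)$. Next I would invoke the augmentation homomorphism $\varepsilon\colon RG\to R$ determined by $\varepsilon(g)=1_R$ for all $g\in G$, i.e. $\varepsilon\bigl(\sum_{g\in G} r_g\,g\bigr)=\sum_{g\in G} r_g$; this is again a unital ring homomorphism (the only thing to check, and it is routine, is that $\varepsilon$ respects products and sends $1_{RG}=1_R\cdot 1_G$ to $1_R$). Applying Lemma~2.7 to $\varepsilon$ yields ${\rm gn}(RG)\geq {\rm gn}(R)$. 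Combining the two inequalities gives ${\rm gn}(RG)={\rm gn}(R)$.

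The final assertion follows at once: by definition a ring has UGN if and only if its generating number equals $\aleph_0$, so ${\rm gn}(RG)={\rm gn}(R)$ forces $RG$ to have UGN exactly when $R$ does. I do not anticipate any genuine obstacle here — the argument is essentially a two-line application of Lemma~2.7 — the only minor point being the verification that the augmentation map is a well-defined unital ring homomorphism, which is standard.
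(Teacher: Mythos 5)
Your proof is correct and is exactly the paper's argument: the paper likewise applies Lemma~2.7 to the inclusion $R\to RG$ and to the augmentation map $RG\to R$ to get the two inequalities. The extra details you supply (checking the augmentation map is a unital ring homomorphism) are routine and consistent with what the paper leaves implicit.
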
 

\begin{proof} There is a ring embedding $R\to RG$, and the augmentation map is a ring homomorphism $RG\to R$. Hence the conclusion follows from Lemma 2.7. 
\end{proof}

\begin{lemma} Let $R$ be a ring with BGN, and let $n:={\rm gn}(R)$. If $m$ is a positive integer divisor of $n$, then ${\rm gn}(M_m(R))=\frac{n}{m}$.
\end{lemma}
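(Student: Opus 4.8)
The plan is to relate finitely generated modules over $M_m(R)$ to finitely generated modules over $R$ via the standard Morita equivalence, and then read off the generating numbers from the behaviour of epimorphisms $R^n \to R^{n+1}$.

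First I would recall the Morita equivalence between $R$ and $S := M_m(R)$. Under the equivalence $F\colon \mathfrak{M}_R \to \mathfrak{M}_S$ (concretely, $F(-) = (-) \otimes_R R^m$, viewing $R^m$ as an $R$–$S$-bimodule, with inverse essentially $(-)\otimes_S (R^m)^t$), the free module $R_R$ corresponds to a progenerator $P$ in $\mathfrak{M}_S$ with the property that $S_S \cong P^m$. Thus $F(R^k) \cong P^k$ for all $k$, and $F$ carries epimorphisms to epimorphisms (category equivalences preserve surjectivity and finite generation, as used already in the proof of Corollary 2.3). The crux is therefore to translate statements about epimorphisms $S^a \to S^b$ into statements about epimorphisms $P^c \to P^d$, using $S^a \cong P^{ma}$.

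The key step is the following dictionary. Suppose $\mathrm{gn}(R) = n$ and $m \mid n$; write $n = mq$. By Lemma 2.1 (with $A = P$) and the isomorphism $S_S \cong P^m$, the existence of an $S$-module epimorphism $S^a \to S^{a+1}$ is equivalent to the existence of an epimorphism $P^{ma} \to P^{ma+m}$, and by Lemma 2.1 this in turn forces epimorphisms $P^{ma}\to P^t$ for all $t$, in particular $P^{ma}\to P^{ma+1}$; conversely an epimorphism $P^{ma}\to P^{ma+1}$ gives (again by Lemma 2.1) one onto $P^{ma+m}\cong S^{a+1}$. Hence $\mathrm{gn}(S) \le a$ iff there is an epimorphism $P^{ma}\to P^{ma+1}$. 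Now apply $F^{-1}$: an epimorphism $P^{c}\to P^{c+1}$ exists iff an $R$-module epimorphism $R^{c}\to R^{c+1}$ exists, which by Proposition 2.4 happens exactly when $c \ge n$. Therefore $\mathrm{gn}(S)\le a$ iff $ma \ge n = mq$, i.e.\ iff $a \ge q$. This shows $\mathrm{gn}(M_m(R)) = q = n/m$, and in particular $M_m(R)$ has BGN. (One should also note $M_m(R)$ does have BGN rather than UGN: since $R$ has BGN, so does $M_m(R)$ by Corollary 2.3, so $\mathrm{gn}(M_m(R))$ is a genuine positive integer and the displayed reasoning applies.)

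The main obstacle I anticipate is bookkeeping the Lemma 2.1 "padding" arguments cleanly so that the equivalence "$\mathrm{gn}(S)\le a \iff \exists\, P^{ma}\twoheadrightarrow P^{ma+1}$" is airtight — in particular making sure that from an epimorphism $P^{ma}\to P^{ma+1}$ one really gets $S^a \twoheadrightarrow S^{a+1}$, which needs $P^{ma+1}$ to surject onto $P^{ma+m} = P^{m(a+1)}$, valid by Lemma 2.1 since $ma+1 \le m(a+1)$ when $m\ge 1$. The divisibility hypothesis $m\mid n$ is exactly what makes the threshold $ma\ge n$ land on an integer $a = n/m$; without it the same computation would only give $\mathrm{gn}(M_m(R)) = \lceil n/m\rceil$, which is why the statement restricts to divisors. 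An alternative, slightly more self-contained route avoids naming $P$ and instead argues directly with the matrix characterization (Lemma 2.5): an epimorphism $(M_m(R))^a \to (M_m(R))^b$ corresponds to a pair of matrices over $M_m(R)$ whose product is the identity, which upon "expanding blocks" is precisely a pair of $R$-matrices of sizes $mb\times ma$ and $ma\times mb$ with product $I_{mb}$, i.e.\ an epimorphism $R^{ma}\to R^{mb}$; combining this with Proposition 2.4 for $R$ gives the same conclusion without invoking Morita theory. I would likely present the matrix version as the main proof, as it keeps everything elementary and in line with the surrounding lemmas.
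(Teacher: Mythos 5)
Your proposal is correct, and the matrix-block argument you say you would actually present is essentially the paper's own proof: the paper establishes ${\rm gn}(M_m(R))=n/m$ by two applications of Lemma 2.5 together with Proposition 2.4, expanding a $q\times p$ identity-factorization over $M_m(R)$ into an $mq\times mp$ one over $R$ (giving $n\le mp$) and contracting a $km\times n$ factorization over $R$ into a $k\times r$ one over $M_m(R)$ (giving $p\le n/m$). Your more detailed Morita-theoretic route, via the progenerator $P$ with $S_S\cong P^m$ and the translation of epimorphisms $S^a\to S^{a+1}$ into epimorphisms $P^{ma}\to P^{ma+1}$ and thence $R^{ma}\to R^{ma+1}$, is also sound; it is in the spirit of the paper's Remark following Corollary 2.3 and makes more transparent why the answer is $\lceil n/m\rceil$ in general, but it imports machinery (the explicit bimodule equivalence) that the elementary block computation avoids. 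Your observation about the role of the divisibility hypothesis is accurate.
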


\begin{proof} 

Because $M_m(R)$ possesses a subring that is isomorphic to $R$, Lemma 2.7 implies that $M_m(R)$ has BGN. Put $p:={\rm gn}(M_m(R))$. Appealing to Proposition~2.4((i)$\implies$(ii)) and Lemma~2.5, we obtain an integer $q~>~p$, a $q\times p$ matrix $A$ with entries in $M_m(R)$, and a $p\times q$ matrix $B$ with entries in $M_m(R)$ such that $AB=I_q$. This gives rise to an $mq\times mp$ matrix $A'$ over $R$ and an $mp\times mq$ matrix $B'$ over $R$ such that $A'B'=I_{mq}$. Consequently, by
Lemma~2.5 and Proposition~2.4((i)$\implies$(ii)), we have $n\leq mp$. 

Set $r:=\frac{n}{m}$. Referring to Proposition 2.4((i)$\implies$(iii)) and Lemma 2.5, we can acquire an integer $k>r$, a $km\times n$ matrix $P$ with entries in $R$, and an $n\times km$ matrix $Q$ with entries in $R$ such that $PQ=I_{km}$. This set-up can then be converted into a $k\times r$ matrix $P'$ with entries in $M_m(R)$ and an $r\times k$ matrix $Q'$ with entries in $M_m(R)$ such that $P'Q'=I_k$. It follows, therefore, from 
Lemma~2.5 and Proposition~2.4((i)$\implies$(ii)) that $p\leq r$. Combining this with the result from the previous paragraph yields $p=r$. 
\end{proof}

\begin{lemma} Let $\{R_{\alpha}:\alpha\in I\}$ be a directed system of rings. If $R:=\varinjlim R_\alpha$, then 
\[{\rm gn}(R)=\min \{{\rm gn}(R_\alpha):\alpha\in I\}.\]

\noindent In particular, $R$ has UGN if and only if $R_\alpha$ has UGN for every $\alpha\in I$. 
\end{lemma}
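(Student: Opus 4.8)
The plan is to establish that ${\rm gn}(R)\leq {\rm gn}(R_\alpha)$ for every $\alpha\in I$ and that ${\rm gn}(R_\beta)\leq {\rm gn}(R)$ for at least one $\beta\in I$; since a directed system is indexed by a nonempty set, these two facts together show that $\min\{{\rm gn}(R_\alpha):\alpha\in I\}$ exists and coincides with ${\rm gn}(R)$. Throughout, I would write $\varphi_\alpha:R_\alpha\to R$ for the canonical ring homomorphism into the direct limit and $\varphi_{\alpha\beta}:R_\alpha\to R_\beta$ (for $\alpha\leq\beta$) for the transition homomorphisms.

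The first inequality is immediate: for each $\alpha$ the map $\varphi_\alpha$ is a ring homomorphism, so Lemma 2.7 gives ${\rm gn}(R_\alpha)\geq {\rm gn}(R)$. In particular, if ${\rm gn}(R)=\aleph_0$ then ${\rm gn}(R_\alpha)=\aleph_0$ for every $\alpha$ and there is nothing left to prove; so I would then assume $n:={\rm gn}(R)$ is finite and concentrate on producing an index $\beta$ with ${\rm gn}(R_\beta)\leq n$.

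To do this, I would first invoke Proposition 2.4((i)$\implies$(ii)) together with Lemma 2.5 to obtain an integer $m>n$, an $m\times n$ matrix $A$ over $R$, and an $n\times m$ matrix $B$ over $R$ with $AB=I_m$. Since $A$ and $B$ have only finitely many entries and $R=\varinjlim R_\alpha$, the directedness of $I$ allows me to choose a single index $\alpha$ and matrices $A'$ (of size $m\times n$) and $B'$ (of size $n\times m$) over $R_\alpha$ that $\varphi_\alpha$ carries entrywise onto $A$ and $B$. Then $\varphi_\alpha$ maps $A'B'\in M_m(R_\alpha)$ to $AB=I_m$. The hard part will be that $A'B'$ need not literally equal $I_m$ in $M_m(R_\alpha)$---we only know that every entry of $A'B'-I_m$ lies in the kernel of $\varphi_\alpha$. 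But a finite family of elements of a direct limit that all vanish in the limit already vanish at a common later stage, so there is $\beta\geq\alpha$ with $\varphi_{\alpha\beta}(A'B'-I_m)=0$; equivalently, the matrices $A'':=\varphi_{\alpha\beta}(A')$ and $B'':=\varphi_{\alpha\beta}(B')$ over $R_\beta$ satisfy $A''B''=I_m$ in $M_m(R_\beta)$. Applying Lemma 2.5 and Proposition 2.4((ii)$\implies$(i)) now yields ${\rm gn}(R_\beta)\leq n$, and combining this with ${\rm gn}(R_\beta)\geq {\rm gn}(R)=n$ gives ${\rm gn}(R_\beta)=n$. Since ${\rm gn}(R_\alpha)\geq n$ for all $\alpha$, the minimum is attained at $\beta$ and equals ${\rm gn}(R)$.

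In short, the only non-routine point is the passage from the relation $\varphi_\alpha(A'B')=I_m$, which holds only ``in the limit,'' to a genuine identity $A''B''=I_m$ over some $R_\beta$; everything else is standard manipulation via Lemma 2.5 and Proposition 2.4 together with the elementary properties of direct limits of rings, namely that every finite subset of $\varinjlim R_\alpha$ comes from a common stage and that an element vanishing in the limit already vanishes at some stage.
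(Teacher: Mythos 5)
Your proof is correct and follows essentially the same route as the paper: Lemma 2.7 applied to the canonical maps gives ${\rm gn}(R)\leq{\rm gn}(R_\alpha)$, and the matrix identity $AB=I_m$ witnessing ${\rm gn}(R)=n$ is descended to some finite stage via Lemma 2.5 and Proposition 2.4. The only difference is that you spell out the two-step descent (lifting the entries to a common stage, then pushing forward until $A''B''-I_m$ vanishes), which the paper compresses into a single sentence.
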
 

\begin{proof} 

Lemma 2.7 implies ${\rm gn}(R)\leq {\rm gn}(R_\alpha)$ for all $\alpha\in I$. The desired conclusion will therefore follow if we show that ${\rm gn}(R_\alpha)\leq {\rm gn}(R)$ for some $\alpha\in I$. Since this statement holds trivially if ${\rm gn}(R)=\aleph_0$, suppose that ${\rm gn}(R)$ is finite and set $p:={\rm gn}(R)$. Invoking Proposition 2.4((i)$\implies$(ii)) and Lemma 2.5, we obtain an integer $q>p$, a $q\times p$ matrix $A$ with entries in $R$, and a $p\times q$ matrix $B$ over $R$ such that $AB=I_q$. This means that, for some $\alpha\in I$, there are a $q\times p$ matrix $A'$ over $R_\alpha$ and a $p\times q$ matrix $B'$ over $R_\alpha$ such that $A'B'=I_q$.
Therefore, appealing to Lemma 2.5 and Proposition 2.4((i)$\implies$(ii)), we conclude that ${\rm gn}(R_\alpha)\leq p$. 
\end{proof}

Our next result describes the relationship between the generating number of a direct product of rings and the generating numbers of the individual factors. The proof is very straightforward; nonetheless, as with the previous four lemmas, we provide the details for the sake of completeness. 

\begin{lemma} Let $\{R_{\alpha}:\alpha\in I\}$ be a family of rings indexed by a set $I$, and write $R:=\prod_{\alpha \in I} R_\alpha$. Then 
\[{\rm gn}(R)=\sup \{{\rm gn}(R_\alpha):\alpha\in I\}.\]
\end{lemma}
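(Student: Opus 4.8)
The plan is to prove both inequalities ${\rm gn}(R)\leq \sup\{{\rm gn}(R_\alpha):\alpha\in I\}$ and ${\rm gn}(R)\geq \sup\{{\rm gn}(R_\alpha):\alpha\in I\}$ separately, reducing everything to the matrix criterion of Lemma 2.5 combined with the characterizations in Propositions 2.2 and 2.4. Throughout, set $s:=\sup\{{\rm gn}(R_\alpha):\alpha\in I\}$, a value in $\mathbb{Z}^+\cup\{\aleph_0\}$.

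For the inequality ${\rm gn}(R)\geq s$, I would use the canonical projection ring homomorphisms $\pi_\alpha:R\to R_\alpha$. By Lemma 2.7, ${\rm gn}(R)\geq {\rm gn}(R_\alpha)$ for every $\alpha\in I$, and hence ${\rm gn}(R)\geq \sup\{{\rm gn}(R_\alpha):\alpha\in I\}=s$. This direction is immediate and requires no further work.

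For the reverse inequality ${\rm gn}(R)\leq s$, I first dispose of the case $s=\aleph_0$, which is trivial. So assume $s=:n$ is finite, meaning ${\rm gn}(R_\alpha)\leq n$ for every $\alpha\in I$. The key observation is that matrix multiplication in $R=\prod_\alpha R_\alpha$ is performed componentwise: an $n\times m$ matrix over $R$ is the same thing as a family $(A^{(\alpha)})_{\alpha\in I}$ of $n\times m$ matrices over the $R_\alpha$, and $A B=C$ in $R$ holds iff $A^{(\alpha)}B^{(\alpha)}=C^{(\alpha)}$ for all $\alpha$. Now fix any integer $m>n$. For each $\alpha$, since ${\rm gn}(R_\alpha)\leq n$, Proposition 2.4((i)$\implies$(iii)) together with Lemma 2.5 (applied with $R_\alpha$ in place of $R$; note if ${\rm gn}(R_\alpha)=\aleph_0$ this case cannot occur since then ${\rm gn}(R_\alpha)=\aleph_0>n=s\geq{\rm gn}(R_\alpha)$, a contradiction — so every ${\rm gn}(R_\alpha)$ is finite and $\leq n$) yields an $m\times n$ matrix $A^{(\alpha)}$ over $R_\alpha$ and an $n\times m$ matrix $B^{(\alpha)}$ over $R_\alpha$ with $A^{(\alpha)}B^{(\alpha)}=I_m$. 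Assembling these componentwise gives an $m\times n$ matrix $A$ over $R$ and an $n\times m$ matrix $B$ over $R$ with $AB=I_m$. By Lemma 2.5 this produces an $R$-module epimorphism $R^n\to R^m$, and since $m>n$ was arbitrary, Proposition 2.4((iii)$\implies$(i)) gives ${\rm gn}(R)\leq n=s$. (One should also note the degenerate case where $I=\emptyset$, so $R$ is the zero ring and the supremum over the empty set should be read appropriately; and the case where some $R_\alpha$ is the zero ring, which contributes nothing problematic.)

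There is no serious obstacle here; the only point requiring a little care is the bookkeeping showing that every ${\rm gn}(R_\alpha)$ is in fact finite once the supremum is finite — equivalently, ensuring that an assembled family of finite-size relations $A^{(\alpha)}B^{(\alpha)}=I_m$ of a \emph{uniform} size $m$ (not depending on $\alpha$) exists. This is exactly what Proposition 2.4((i)$\implies$(iii)) provides: given ${\rm gn}(R_\alpha)\leq n$, one can realize the relation at the single size $m$ of one's choosing, uniformly across all $\alpha$. With that in hand, the componentwise assembly is routine and the proof closes.
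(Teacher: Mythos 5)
Your proof is correct and is essentially the paper's argument: one direction via Lemma 2.7 applied to the projections, and the other by choosing, for each $\alpha$, an epimorphism (equivalently, a matrix factorization of the identity) at a \emph{uniform} size and assembling these componentwise, using that $\prod_\alpha R_\alpha^k\cong R^k$. The paper phrases the assembly in terms of module epimorphisms rather than matrices, but via Lemma 2.5 that is the same step.
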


\begin{proof} That ${\rm gn}(R)$ is an upper bound for the set $\mathcal S:=\{{\rm gn}(R_\alpha):\alpha\in I\}$ follows by applying Lemma 2.7 to the projection maps $R\to R_\alpha$. Suppose that $b\in \mathbb Z^+\cup \{\aleph_0\}$ is an arbitrary upper bound
for $\mathcal S$. We wish to establish that ${\rm gn}(R)\leq b$. This is plainly true if $b=\aleph_0$, so assume that $b$ is finite.  For each $\alpha\in I$, there is an
$R_\alpha$-module epimorphism $\phi_\alpha:\left (R_\alpha\right )^b\to \left (R_\alpha\right )^{b+1}$. The maps $\phi_\alpha$, then, induce an $R$-module epimorphism $\phi:\prod_{\alpha\in I} \left (R_\alpha\right )^b\to \prod_{\alpha\in I} \left (R_\alpha\right )^{b+1}$. Moreover, since $\prod_{\alpha\in I} \left (R_\alpha\right )^k\cong R^k$ as $R$-modules for every $k\in \mathbb Z^+$, the map $\phi$ induces an $R$-module epimorphism $R^b\to R^{b+1}$. Thus ${\rm gn}(R)\leq b$, and so ${\rm gn}(R)=\sup \mathcal S$. 
\end{proof}

In our final preliminary result about generating numbers, we observe that every positive integer can be realized as the generating number of some ring. 

\begin{proposition} For any integer $n>0$, there is a ring $R$ such that ${\rm gn}(R)=n$.
\end{proposition}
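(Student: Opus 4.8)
The plan is to exhibit a ring $R$ whose monoid ${\rm Proj}(R)$ of isomorphism classes of finitely generated projective $R$-modules is isomorphic to $C(n,1)=\langle a:(n+1)a=na\rangle$ in such a way that the class $[R]$ of the regular module corresponds to the generator $a$, and then to read off ${\rm gn}(R)$ from the arithmetic of this monoid. The bridge between the two is the following remark, which I record first. Since a free module is projective, every $R$-module epimorphism $R^j\to R^{j+1}$ splits, so that $R^j\cong R^{j+1}\oplus K$ for some finitely generated projective module $K$; conversely, any isomorphism $R^j\cong R^{j+1}\oplus P$ with $P$ finitely generated projective gives rise to such an epimorphism by composing with the projection onto $R^{j+1}$. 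Hence there is an $R$-module epimorphism $R^j\to R^{j+1}$ if and only if $(j+1)[R]\le j[R]$ holds in ${\rm Proj}(R)$, where $\le$ is the relation defined in \S 2.1. By Proposition 2.4, it follows that ${\rm gn}(R)$ equals the least $j\in\mathbb Z^+$ with $(j+1)[R]\le j[R]$ in ${\rm Proj}(R)$, and equals $\aleph_0$ if no such $j$ exists.

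For the ring itself I would take $R$ to be the universal $K$-algebra over a field $K$ carrying an $(n+1)\times n$ matrix $A$ over $R$ and an $n\times(n+1)$ matrix $B$ over $R$ with $AB=I_{n+1}$ and $BA=I_n$, so that $R^n\cong R^{n+1}$ as $R$-modules; this is the Leavitt algebra of module type $(n,1)$. It is classical --- going back to Leavitt's determination of the module type of this algebra \cite{Leavitt}, and available at the level of ${\rm Proj}$ from the work on Leavitt path algebras underlying \cite{Abrams} and \cite{Loc}, as well as from general realization theorems for conical abelian monoids --- that ${\rm Proj}(R)\cong C(n,1)$ via $[R]\mapsto a$. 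I would cite this rather than reprove it; it is the only non-elementary ingredient, and the same algebras will resurface in Section 4.

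Granting it, the argument concludes with a computation inside $C(n,1)$. The distinct elements of this monoid are $0,a,2a,\dots,(n-1)a$ together with the single further element $na=(n+1)a=(n+2)a=\cdots$, with addition given by $ia+ja=\min(i+j,n)a$; consequently $ia\le ja$ holds precisely when $j\ge n$ or $i\le j$. In particular $(j+1)[R]\le j[R]$ fails for every $j$ with $1\le j\le n-1$ and holds for $j=n$ (indeed $(n+1)a=na$). By the first paragraph this yields ${\rm gn}(R)=n$, as desired. The whole difficulty is therefore concentrated in the imported statement ${\rm Proj}(R)\cong C(n,1)$; a fully self-contained variant would instead realize $C(n,1)$ as ${\rm Proj}$ of the Leavitt path $K$-algebra of a suitable finite graph with $\sum_v[v]\mapsto a$, but verifying that monoid isomorphism would then become the crux.
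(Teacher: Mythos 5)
Your argument is correct and is essentially the paper's own proof: both realize the monoid $C(n,k)$ as ${\rm Proj}(R)$ with $[R]$ corresponding to the generator $a$, and then read off ${\rm gn}(R)=n$ from the observation that $n$ is the least $j$ with $(j+1)a\leq ja$, using the splitting of epimorphisms onto free modules exactly as you describe. The only difference is cosmetic: the paper obtains the realization by applying Bergman's Theorem~2.13 (already stated for later use) to $C(n,k)$, whereas you cite the classical computation of ${\rm Proj}$ for the Leavitt algebra of module type $(n,1)$ --- which is precisely the ring Bergman's construction produces in the case $k=1$.
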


We will deduce Proposition 2.12 from the following important result of Bergman \cite[Theorem 6.2]{Bergman}, which will also serve as the cornerstone of our proof of Theorem~4.10.  

\begin{theorem}{\rm (Bergman)} Let $M$ be a finitely generated abelian monoid with distinguished element $I\neq 0$ such that the following two properties are satisfied.
\begin{enumerate*}
\item For all $x,y\in M$, if $x+y=0$, then $x=y=0$. 
\item For each $x\in M$, there exists $\lambda\in \mathbb Z^+$ such that $x\leq \lambda I$. 
\end{enumerate*}

\noindent Then a ring $R$ exists such that there is a monoid isomorphism $\theta: M\to {\rm Proj}(R)$ with $\theta(I)=[R]$. 
\end{theorem}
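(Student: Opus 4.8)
The plan is to prove Bergman's realization theorem by an explicit universal construction over a base field $k$, building the ring $R$ so that $\mathrm{Proj}(R)$ acquires generators and relations matching a finite presentation of $M$, with the class $[R]$ playing the role of $I$. The two hypotheses have transparent roles. Conicality (i) is automatic for $\mathrm{Proj}$ of any ring, while the order-unit condition (ii) is precisely the statement that $I$ can be realized as $[R]$: in any ring every finitely generated projective is a direct summand of some $R^\lambda$ and hence satisfies $[P]\le \lambda[R]$. Thus the theorem asserts that these two obviously necessary conditions are already sufficient in the finitely generated case, and the work lies in the sufficiency.

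First I would reduce $M$ to a convenient finite presentation. Since $M$ is a finitely generated commutative monoid, R\'edei's theorem supplies a finite presentation $M=\langle a_0,a_1,\ldots,a_n \mid p_j=q_j\ (1\le j\le m)\rangle$, where the $p_j,q_j$ are words in the generators, arranged so that $a_0=I$. By (ii), for each generator $a_i$ there are $\lambda_i\in\mathbb{Z}^+$ and $b_i\in M$ with $a_i+b_i=\lambda_i a_0$. I would fold these domination data into the presentation, so that it records explicitly that every generator is a summand of a multiple of $a_0$. It would be a mistake to try to realize the free commutative monoid on $a_0,\ldots,a_n$ with $a_0=[R]$, because in any ring $[R]$ is automatically an order unit; the domination relations therefore cannot be postponed and must be installed at the moment the generators are created.

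The construction then proceeds in two stages, using the machinery of ring coproducts and universal adjunctions over $k$. Stage one realizes the generators: starting from $k$, so that $\mathrm{Proj}(k)\cong\mathbb{N}$ with $[k]=a_0$, I would adjoin, for each $i\ge 1$, a universal idempotent endomorphism of $R^{\lambda_i}$ over the current ring, splitting $R^{\lambda_i}$ into a summand $P_i$ with $[P_i]=a_i$ and a complement realizing $b_i$; this installs the generator together with its domination $a_i+b_i=\lambda_i[R]$. Bergman's computation of $\mathrm{Proj}$ under such idempotent adjunctions, and under coproducts over $k$ (where $\mathrm{Proj}$ of a coproduct is the pushout of the factors' $\mathrm{Proj}$ amalgamated over $\mathbb{N}$), shows that after this stage $\mathrm{Proj}$ is the freest conical monoid with order unit $[R]=a_0$ carrying the generators $a_i$ and their domination relations. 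Stage two imposes the defining relations: each $p_j=q_j$ is an equation between the classes of two finitely generated projectives $P_j,Q_j$, and I would adjoin a universal isomorphism $P_j\xrightarrow{\ \sim\ }Q_j$. The key lemma is that this operation passes $\mathrm{Proj}$ to its quotient by exactly the monoid congruence generated by $[P_j]=[Q_j]$. Performing all $m$ relation-adjunctions yields a ring $R$ with $\mathrm{Proj}(R)\cong M$ and $[R]$ corresponding to $a_0=I$, giving the isomorphism $\theta$ with $\theta(I)=[R]$.

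The main obstacle is the \emph{faithfulness} of these universal constructions: one must prove that adjoining idempotents and isomorphisms alters $\mathrm{Proj}$ by precisely the predicted generators and congruences, introducing no spurious projective modules and no unexpected identifications. This is the technical heart of Bergman's paper and rests on a careful analysis of the module category of a ring coproduct over the field $k$ — a normal-form (basis) theorem for modules over the coproduct — together with the facts that such coproducts inherit hereditariness and that their finitely generated projectives are classified by the monoid pushout. Running alongside this, I would need to check that conicality and the realization of $[R]$ as the order unit persist at every stage, which is exactly where the hypotheses that $M$ be finitely generated, conical, and equipped with an order unit $I$ get used: they guarantee that the successive monoid pushouts remain conical with distinguished element $[R]$ throughout the construction.
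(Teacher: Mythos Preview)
The paper does not supply a proof of this statement; it is quoted as Bergman's theorem with a citation to \cite[Theorem 6.2]{Bergman} and used as a black box. So there is nothing in the paper to compare your argument against.

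That said, your sketch is a faithful outline of Bergman's original approach: pass to a finite presentation of $M$ (via R\'edei), realize the generators by universally adjoining idempotent matrices over a base field so as to split free modules, impose the relations by universally adjoining isomorphisms between the relevant projectives, and control $\mathrm{Proj}$ at each stage using Bergman's structure theorems for modules over ring coproducts with a field as the base. You correctly identify the decisive technical point as the faithfulness of these universal adjunctions---that each step changes $\mathrm{Proj}$ by exactly the predicted free/quotient operation and nothing more---and that this rests on the normal-form analysis for modules over coproducts. One small caveat: you should be explicit that the base must be semisimple (a field suffices) for Bergman's coproduct machinery to apply cleanly; your phrase ``over a base field $k$'' already handles this, but it is worth flagging as a genuine hypothesis rather than a convenience.
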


\begin{proof}[Proof of Proposition 2.12]  Let $k\in \mathbb Z^+$. The abelian monoid $C(n,k)$ satisfies the hypotheses of Theorem 2.13 where the distinguished element is the generator $a$. Thus there are  a ring $R$ and a monoid isomorphism $\theta:C(n,k)\to {\rm Proj}(R)$ with $\theta(a)=[R]$. Observe that $n$ is the smallest positive integer such that $(n+1)a\leq na$ in the monoid $C(n,k)$. As a result, $n$ is also the smallest positive integer such that $R^{n+1}$ is an $R$-module direct summand  of $R^n$. In other words, ${\rm gn}(R)=n$.
\end{proof}

\begin{remark}  {\rm A concrete example of a ring $R$ having ${\rm Proj}(R)\cong C(n,k)$ is the $K$-algebra $V_{n, n+k}$ defined in \cite{Bergman} for a fixed field $K$ (see \cite[Theorem 6.1]{Bergman}). These algebras were first studied by W. G. Leavitt in his seminal papers \cite{Leavitt3, Leavitt2, Leavitt}. In his honor, the current standard notation for the $K$-algebra $V_{n, n+k}$ is $L_K(n,n+k)$ (see \cite{LPA}). It enjoys the following universal property:  for any $K$-algebra $R$, we have $R^n\cong R^{n+k}$ as $R$-modules if and only if there is a $K$-algebra homomorphism $L_K(n,n+k)\to R$.} 
\end{remark}

\subsection{Translation rings}

Below we prove an important property of translation rings of groups, namely, that they can be viewed as skew group rings. This result is similar to \cite[Theorem 4.28]{Roe} and is probably well known in the case of a finitely generated group, although the authors are not aware of its appearance anywhere in the literature.   

\begin{proposition} For any ring $R$ and group $G$, the ring $T(G,R)$ is isomorphic to the skew group ring $\displaystyle{\prod_G R\ \ast\ G}$, where the action of $G$ on $\displaystyle{\prod_GR}$ is defined by
\[(g\cdot f)(x):=f(g^{-1}x)\ \ \mbox{for}\ \ g, x\in G.\]
\end{proposition}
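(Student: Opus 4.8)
The plan is to write down an explicit bijection $\Psi\colon T(G,R)\to\prod_G R\ast G$ and verify that it is a ring homomorphism; only multiplicativity requires real work. First I would check that the stated rule $(g\cdot f)(x):=f(g^{-1}x)$ really defines an action of $G$ on $\prod_G R$: each $g\cdot(-)$ is a ring automorphism of $\prod_G R$ (it merely permutes the coordinates along $x\mapsto gx$, with inverse $g^{-1}\cdot(-)$), and $(gh)\cdot f=g\cdot(h\cdot f)$ since $f((gh)^{-1}x)=f(h^{-1}g^{-1}x)$. Hence, by the construction of skew group rings recalled above (the case $\omega\equiv 1_R$), the skew group ring $\prod_G R\ast G$ is defined, and its elements are the finite formal sums $\sum_{g\in G}f_g\,g$ with $f_g\in\prod_G R$ and multiplication governed by $(f_g\,g)(f_h\,h)=f_g\,(g\cdot f_h)\,(gh)$.

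Next I would set up the correspondence. For $M\in T(G,R)$ and $g\in G$, define $f^M_g\in\prod_G R$ by $f^M_g(x):=M(x,g^{-1}x)$. If $K$ is a finite witness set for $M$, then $f^M_g\neq 0$ forces $g^{-1}\in K$, so $f^M_g=0$ for all $g\notin K^{-1}$; thus $\Psi(M):=\sum_{g\in G}f^M_g\,g$ is a well-defined element of $\prod_G R\ast G$. Conversely, from any finitely supported family $(f_g)_{g\in G}$ one builds a matrix $M$ by $M(x,y):=f_{xy^{-1}}(x)$, which lies in $T(G,R)$ with witness set $L^{-1}$, where $L:=\{g:f_g\neq 0\}$; since $y=(xy^{-1})^{-1}x$, these two assignments are mutually inverse, so $\Psi$ is a bijection. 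It is visibly additive, and it is unital: for the identity matrix $I$ one computes $f^I_g=\delta(g,1_G)\cdot 1$, so $\Psi(I)=1\cdot 1_G$, the identity of $\prod_G R\ast G$.

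The main point is multiplicativity, $\Psi(MN)=\Psi(M)\Psi(N)$. Comparing the coefficients of a fixed $k\in G$ on both sides, this reduces to the identity $f^{MN}_k=\sum_{gh=k}f^M_g\,(g\cdot f^N_h)$ in $\prod_G R$. Evaluating the right-hand side at an arbitrary $x\in G$ and substituting $h=g^{-1}k$ gives $\sum_{g\in G}M(x,g^{-1}x)\,N(g^{-1}x,k^{-1}x)$, and re-indexing by $z:=g^{-1}x$ (a bijection of $G$) turns this into $\sum_{z\in G}M(x,z)\,N(z,k^{-1}x)=(MN)(x,k^{-1}x)=f^{MN}_k(x)$. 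I expect this computation to be the only real obstacle, and the subtlety is purely the noncommutative bookkeeping: one must use $f^M_g(x)=M(x,g^{-1}x)$ and not $M(x,gx)$, the latter choice producing an anti-isomorphism — the inverses are precisely what make the translations compose on the side consistent with both the action $(g\cdot f)(x)=f(g^{-1}x)$ and the order of multiplication in the skew group ring. With this identity established, the proposition follows.
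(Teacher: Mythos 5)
Your proof is correct and is essentially the paper's argument run in the opposite direction: the paper defines $\phi\bigl(\sum_i f_i\,g_i\bigr)=\sum_i D_{f_i}A_{g_i}$, whose $(x,y)$ entry is exactly your $f_{xy^{-1}}(x)$, and obtains multiplicativity for free from the relation $A_gD_fA_g^{-1}=D_{g\cdot f}$ together with the presentation of the skew group ring, whereas you construct the inverse map and check multiplicativity by the direct coefficient computation. Both rest on the same decomposition of a translation matrix into its diagonal bands $x\mapsto M(x,g^{-1}x)$, so the difference is only bookkeeping.
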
 

\begin{proof}  For an arbitrary element $g$ of $G$, define $A_g$ to be the matrix in $T(G,R)$ such that  $A_g(x,y):=1$ if  $y=g^{-1}x$ and $A_g(x,y):=0$ if $y\neq g^{-1}x$. Moreover, for each function $f:G\to R$, let $D_f$ be the $G\times G$ diagonal matrix with $D_f(x,x):=f(x)$ for all $x\in G$. An easy calculation establishes  
\[\left(A_gD_fA_g^{-1}\right)(x,y)=\begin{cases}f(g^{-1}x) &\mbox{if}\ y=x\\ 0 &\mbox{if}\ y\neq x \end{cases}\ =\ D_{g\cdot f}(x,y)\]
for every $f:G\to R$ and $g\in G$. This means that there is a ring homomorphism $\phi:\prod_G R\ \ast\ G\to T(G,R)$ such that $\phi(f)=D_f$ for every function $f:G\to R$ and $\phi(g)=A_g$ for every $g\in G$. 

Another straightforward calculation shows that, for any function $f:G\to R$ and $g\in G$, 
\[(D_fA_g)(x,y)=\begin{cases} f(x) &\mbox{if}\ y=g^{-1}x\\ 0 &\mbox{if}\ y\neq g^{-1}x \end{cases}.\]
Hence, if $f_1,\dots, f_r:G\to R$ are functions and $g_1,\dots, g_r$ distinct elements of $G$, then
\[\left(\phi \left( \sum_{i=1}^r f_i g_i \right)\right)(x,y)=\left(\sum_{i=1}^rD_{f_i}A_{g_i}\right )(x,y)=\begin{cases} f_i(x) &\mbox{if}\ y=g_i^{-1}x\\ 0 &\mbox{if}\ y\neq g_i^{-1}x \end{cases}.\]
From this, we ascertain that $\phi$ is bijective. 
\end{proof}

\subsection{Amenable subsets, paradoxical decompositions, and the F\o lner condition}

In this subsection, we investigate amenable subsets of groups, beginning with a few elementary observations. Notice first that a finite subset of a group is amenable with respect to the group if and only if it is nonempty.  
We also invite the reader to verify using Definition~1.2 that a subset $X$ is amenable with respect to a group $G$ if and only if $X$ is amenable with respect to the subgroup of $G$ that is generated by $X$. (This is, though, most easily shown by applying Theorem~2.19 below instead of the definition.) As a consequence, every amenable subgroup of a group $G$ is necessarily amenable with respect to $G$. 

Next we discuss paradoxical decompositions and their relevance to nonamenable subsets, borrowing the parlance and approach adopted by G. Tomkowicz and S. Wagon in their treatise \cite{Wagon} on the Banach-Tarski paradox. First we state the following definition from their book. 

\begin{definition}[{\cite[Definition 3.4]{Wagon}}] {\rm Let $G$ be a group. Two sets $A, B\subseteq G$ are \emph{G-equidecomposable}, written $A\sim_G B$, if there are a partition $\{A_1,\dots,A_n\}$ of $A$ and elements $g_1,\dots, g_n\in G$ such that $\{g_1A_1,\dots, g_nA_n\}$ is a partition of $B$.}
\end{definition}

It is easy to see that $\sim_G$ is an equivalence relation on the set of subsets of $G$. Our next lemma provides an alternative description of $G$-equidecomposability, one well suited for some of the arguments that appear later. 

\begin{lemma} Let $G$ be a group and $A, B$ subsets of $G$. Then $A\sim_G B$ if and only if there are a finite set $K\subseteq G$ and a bijective map $\alpha:A\to B$ such that $\alpha(a)a^{-1}\in K$ for all $a\in A$. 
\end{lemma}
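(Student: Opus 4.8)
The plan is to prove both implications of the biconditional directly from Definition~2.17, using in each direction the dictionary ``translating the block $A_i$ by $g_i$'' $\longleftrightarrow$ ``sending $a\in A_i$ to $g_ia$''.

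For the forward direction, I would assume $A\sim_G B$ and fix a witnessing partition $\{A_1,\dots,A_n\}$ of $A$ together with elements $g_1,\dots,g_n\in G$ such that $\{g_1A_1,\dots,g_nA_n\}$ is a partition of $B$. Define $\alpha\colon A\to B$ by $\alpha(a):=g_ia$ for $a\in A_i$. This is well defined since the blocks $A_i$ are pairwise disjoint and cover $A$; it is surjective since the sets $g_iA_i$ cover $B$; and it is injective since, if $\alpha(a)=\alpha(a')$ with $a\in A_i$ and $a'\in A_j$, then $g_ia=g_ja'$ lies in $g_iA_i\cap g_jA_j$, which forces $i=j$ and then $a=a'$. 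Setting $K:=\{g_1,\dots,g_n\}$, a finite set, we obtain $\alpha(a)a^{-1}=g_i\in K$ for every $a\in A_i$, which is exactly the asserted property.

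For the converse, suppose a finite set $K\subseteq G$ and a bijection $\alpha\colon A\to B$ are given with $\alpha(a)a^{-1}\in K$ for all $a\in A$. Let $g_1,\dots,g_n$ enumerate, without repetition, those elements of $K$ that actually occur as values of the map $a\mapsto\alpha(a)a^{-1}$, and put $A_i:=\{a\in A:\alpha(a)a^{-1}=g_i\}$. Since the $g_i$ are distinct, $\{A_1,\dots,A_n\}$ is a partition of $A$ into nonempty sets. For $a\in A_i$ we have $\alpha(a)=g_ia$, hence $\alpha(A_i)=g_iA_i$; and because $\alpha$ is a bijection onto $B$, the sets $\alpha(A_i)=g_iA_i$ form a partition of $B$. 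Therefore $A\sim_G B$. The degenerate case $A=\emptyset$, in which both sides of the equivalence hold trivially because then $B=\emptyset$ as well, should be disposed of at the outset, so that throughout one may assume $A\neq\emptyset$; this also guarantees that the families of blocks produced above are genuine partitions.

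I do not expect any real obstacle here: each direction is a routine unwinding of the definition of $G$-equidecomposability. The only points that demand a modicum of care are discarding empty blocks so that what is produced qualifies as a partition in the strict sense, and handling the trivial case $A=\emptyset$ separately; neither is substantive.
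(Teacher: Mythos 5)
Your proof is correct and follows essentially the same route as the paper: in the forward direction you build $\alpha$ piecewise from the translating elements, and in the converse you recover the partition as the fibers of $a\mapsto\alpha(a)a^{-1}$. The extra care you take with empty blocks and the case $A=\emptyset$ is harmless bookkeeping that the paper elides.
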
 

\begin{proof} Assume $A\sim_G B$. This means that there are a finite set $K\subseteq G$ and a partition $\{A_k:k\in K\}$ of $A$ such that $\{kA_k:k\in K\}$ is a partition of $B$. Then the map $\alpha:A\to B$ defined by $\alpha(a_k)=ka_k$ for every $a_k\in A_k$ has the required properties. 

For the converse, we assume that there are a set $K$ and a bijection $\alpha:A\to B$ as described. For each $k\in K$, set $A_k:=\{a\in A:\alpha(a)=ka\}$. Then $\{A_k:k\in K\}$ is a partition of $A$, and $\{kA_k:k\in K\}$ is a partition of $B$. Therefore $A\sim_G B$.
\end{proof}

The notion of equidecomposability of subsets of groups provides a convenient way to define paradoxical subsets of groups. 

\begin{definition}[{\cite[Definition 1.1]{Wagon}}] {\rm Let $G$ be a group. A subset $X$ of $G$ is said to be \emph{$G$-paradoxical} if there are sets $A, B\subseteq X$ such that $A\cap B=\emptyset$ and $A\sim_G X\sim_G B$.}
\end{definition}

It is shown in \cite{Wagon} that the sets $A$ and $B$ in Definition 2.17 can always be chosen so that their union is $X$. 

\begin{lemma}[{\cite[Corollary 3.7]{Wagon}}] A subset $X$ of a group $G$ is $G$-paradoxical if and only if there is a partition $\{A, B\}$ of $X$ such that $A\sim_G X\sim_G B$.
\end{lemma}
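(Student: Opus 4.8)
The plan is to prove the two implications separately, with essentially all of the content in the forward direction. The reverse implication is immediate from the definition: if $\{A,B\}$ is a partition of $X$ with $A\sim_G X\sim_G B$, then $A$ and $B$ are in particular disjoint subsets of $X$ satisfying $A\sim_G X\sim_G B$, so $X$ is $G$-paradoxical by Definition 2.19.

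For the forward direction, suppose $X$ is $G$-paradoxical, so there are disjoint $A,B\subseteq X$ with $A\sim_G X\sim_G B$. I claim that the partition $\{A,\,X\setminus A\}$ of $X$ already works. One of its parts, namely $A$, satisfies $A\sim_G X$ by hypothesis, so it remains only to prove $X\setminus A\sim_G X$. For this I would establish, as a separate lemma, a Banach--Schr\"oder--Bernstein principle for $\sim_G$: if $U$ is $G$-equidecomposable with some subset of $V$ and $V$ is $G$-equidecomposable with some subset of $U$, then $U\sim_G V$. Granting this, apply it with $U:=X$ and $V:=X\setminus A$: since $A\cap B=\emptyset$ we have $B\subseteq X\setminus A$, and $X\sim_G B$, so $X$ is $G$-equidecomposable with a subset of $X\setminus A$; conversely $X\setminus A$ is $G$-equidecomposable with itself, which is a subset of $X$, via the identity map. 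Hence $X\setminus A\sim_G X$. (When $X\neq\emptyset$ this is a genuine two-part partition into nonempty pieces, since then $A\sim_G X$ forces $A\neq\emptyset$, and $X\setminus A\supseteq B$ with $B\sim_G X$ forces $B\neq\emptyset$; the case $X=\emptyset$ is vacuous.)

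To prove the Banach--Schr\"oder--Bernstein principle I would invoke the reformulation of $\sim_G$ in Lemma 2.18 and then run the classical back-and-forth argument while tracking translation lengths. Write the two hypothesised equidecompositions as bijections $\alpha:U\to U'$ and $\gamma:V\to V'$ with $U'\subseteq V$, $V'\subseteq U$, and with $\alpha(u)u^{-1}$ and $\gamma(v)v^{-1}$ each ranging over a finite subset of $G$. Set $C_0:=U\setminus V'$, $C_{n+1}:=(\gamma\circ\alpha)(C_n)$, $C:=\bigcup_{n\ge0}C_n$, and define $\psi:U\to V$ to equal $\alpha$ on $C$ and $\gamma^{-1}$ on $U\setminus C$; the standard Schr\"oder--Bernstein verification shows $\psi$ is a bijection. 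The only thing beyond the classical argument is to observe that $\psi$ again moves every point by an element of a single finite subset of $G$: composition, inversion, and restriction of maps of this type preserve the property, the relevant finite sets being replaced by products, inverses, and subsets; and gluing two such maps along a partition of the domain merely replaces the finite set by the union of the two. Hence $U\sim_G V$ by Lemma 2.18. I expect this last bookkeeping to be the only real obstacle --- the set-theoretic recursion is routine, and the subtlety is entirely in checking that it is $\sim_G$, and not just equinumerosity, that comes out; this is exactly where Lemma 2.18 does its work. (Alternatively, one may simply cite the Banach--Schr\"oder--Bernstein theorem for $\sim_G$ from \cite{Wagon}.)
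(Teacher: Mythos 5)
Your proof is correct, and it is essentially the argument behind the result as it appears in the source the paper cites: the paper offers no proof of its own for this lemma, simply referring to \cite[Corollary 3.7]{Wagon}, whose proof is exactly your reduction to the Banach--Schr\"oder--Bernstein theorem for $\sim_G$ applied to $B\subseteq X\setminus A\subseteq X$ with $X\sim_G B$. Your back-and-forth construction of $\psi$ and the observation that piecewise gluing and inversion only replace the controlling finite set $K$ by unions and inverses (so that Lemma 2.18 still applies) are the standard way to establish that theorem, so there is nothing to flag.
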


A partition $\{A, B\}$ of a subset $X$ of a group $G$ such that $A\sim_G X\sim_G B$ is called a \emph{paradoxical decomposition} of $X$ with respect to $G$. A seminal result by Tarski asserts that the existence of a paradoxical decomposition of a subset of a group is equivalent to the failure of the subset to be amenable.

\begin{theorem}[{Tarski \cite{Tar}, \cite[Corollary 11.2]{Wagon}}] Let $G$ be a group and $X$ a subset of $G$. Then the following two statements are equivalent.
\begin{enumerate*}
\item $X$ is $G$-paradoxical.
\item $X$ is not amenable with respect to $G$.
\end{enumerate*}
\end{theorem}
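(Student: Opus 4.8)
The plan is to prove the two implications separately: the implication (i)$\Rightarrow$(ii) is elementary, while (ii)$\Rightarrow$(i) is the substance of the theorem and requires the machinery of type semigroups.

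For (i)$\Rightarrow$(ii) I would argue by contradiction, assuming $X$ to be simultaneously $G$-paradoxical and amenable with respect to $G$, the latter witnessed by a function $\mu\colon\mathcal{P}(G)\to[0,\infty]$ as in Definition~2.15. The first step is to observe that $\mu$ is constant on $\sim_G$-classes: if $A\sim_G B$, then writing $A=\bigsqcup_{i=1}^{n}A_i$ and $B=\bigsqcup_{i=1}^{n}g_iA_i$ and invoking properties~(i) and~(ii) of Definition~2.15 gives $\mu(B)=\sum_i\mu(g_iA_i)=\sum_i\mu(A_i)=\mu(A)$. Next I would use Lemma~2.20 to fix a partition $\{A,B\}$ of $X$ with $A\sim_G X\sim_G B$; then $\mu(A)=\mu(B)=\mu(X)=1$, whereas finite additivity forces $1=\mu(X)=\mu(A)+\mu(B)=2$, the desired contradiction.

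For (ii)$\Rightarrow$(i) I would argue contrapositively, constructing from the hypothesis that $X$ is \emph{not} $G$-paradoxical a function $\mu$ as in Definition~2.15. The vehicle is the \emph{type semigroup} $\mathcal{S}$ associated with the action of $G$ on itself by left translation: its elements are the equidecomposability classes $[E]$ of subsets $E\subseteq G\times\mathbb{Z}^+$, its addition is obtained by shifting representatives onto disjoint blocks of levels, and its algebraic preorder is $\alpha\le\beta\iff(\exists\gamma\in\mathcal{S})\ \alpha+\gamma=\beta$ (for the precise construction I would follow \cite[Chapter~11]{Wagon}). Writing $\epsilon:=[X\times\{1\}]$, I would first check --- using Lemma~2.20 and transitivity of $\sim_G$, essentially by projecting a witnessing equidecomposability in $G\times\mathbb{Z}^+$ down to $G$ --- that $X$ is $G$-paradoxical exactly when $2\epsilon\le\epsilon$ in $\mathcal{S}$. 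The argument would then rest on two classical facts. The first is an algebraic theorem of Tarski: an element $\epsilon$ of a commutative monoid admits a monoid homomorphism $\nu\colon\mathcal{S}\to[0,\infty]$ with $\nu(\epsilon)=1$ if and only if $(n+1)\epsilon\not\le n\epsilon$ for every $n\in\mathbb{Z}^+$. The second is the combinatorial \emph{cancellation property} of type semigroups: if $k\alpha\le k\beta$ in $\mathcal{S}$ for some $k\in\mathbb{Z}^+$, then $\alpha\le\beta$. Granting the cancellation property, a short formal computation in $\mathcal{S}$ converts non-paradoxicality into the hypothesis of Tarski's algebraic theorem: from $(n+1)\epsilon\le n\epsilon$ one extracts some $d$ with $\epsilon\le d$ and $(n+1)\epsilon+d=(n+1)\epsilon$, hence $(n+1)(\epsilon+d)=(n+1)\epsilon$ and therefore $(n+1)\cdot 2\epsilon\le(n+1)\epsilon$; cancelling the factor $n+1$ leaves $2\epsilon\le\epsilon$. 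Thus $2\epsilon\not\le\epsilon$ forces $(n+1)\epsilon\not\le n\epsilon$ for all $n$, Tarski's algebraic theorem then supplies $\nu$, and the function $\mu(A):=\nu([A\times\{1\}])$, $A\subseteq G$, is the one sought: additivity holds because disjoint $A,B$ give disjoint $A\times\{1\}$, $B\times\{1\}$, so $[(A\cup B)\times\{1\}]=[A\times\{1\}]+[B\times\{1\}]$; $G$-invariance holds because $gA\times\{1\}$ is the image of $A\times\{1\}$ under the single translation $g$, hence $\sim_G$-equivalent to it; and $\mu(X)=\nu(\epsilon)=1$.

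I expect the main obstacle to be the cancellation property $k\alpha\le k\beta\Rightarrow\alpha\le\beta$; the remaining steps are either formal manipulations in $\mathcal{S}$ or of a soft, Hahn--Banach character. This cancellation is genuinely combinatorial and fails in arbitrary commutative monoids --- for instance $C(2,1)$ satisfies $2(2a)\le 2a$ but $2a\not\le a$ --- and the standard proof encodes an inequality $k\alpha\le k\beta$ as a bipartite graph whose vertex degrees are bounded in terms of a common finite translation set and then invokes a K\"onig/Hall-type matching theorem to split it into a single matching witnessing $\alpha\le\beta$. Tarski's companion theorem on commutative monoids, while not trivial, reduces via a Zorn's lemma (or weak-$*$ compactness) argument to a finitary consistency check. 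As all of this is classical --- carried out in full in \cite[Chapter~11]{Wagon} --- it would suffice here simply to cite the result.
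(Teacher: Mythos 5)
The paper does not prove Theorem~2.21 at all: it is stated as a cited classical result of Tarski, with the proof deferred to \cite[Corollary 11.2]{Wagon}. Your outline is a correct and faithful reproduction of exactly that argument --- the easy direction via the observation that any invariant finitely additive $\mu$ is constant on $\sim_G$-classes together with Lemma~2.20, and the hard direction via the type semigroup, the cancellation law $k\alpha\le k\beta\Rightarrow\alpha\le\beta$, and Tarski's theorem on commutative monoids (plus, implicitly, the Banach--Schr\"oder--Bernstein theorem to pass from $2\epsilon\le\epsilon$ to $2\epsilon=\epsilon$). So your proposal agrees with the source the paper relies on; there is nothing to compare beyond noting that the authors simply cite the result rather than reprove it.
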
 

An important characterization of amenable groups was established by F\o lner \cite{Folner} in 1955. Rosenblatt \cite[Theorem 4.9]{Rosenblatt1} extended F\o lner's result in 1973 to describe the amenability of a subset of a group. For our purposes, it will be convenient to use the following version of Rosenblatt's condition. 

\begin{definition}{\rm Let $G$ be a group. If $X$ is a subset of $G$, then we say that {\it $G$ satisfies the F\o lner condition with respect to $X$} if, for any finite subset $K$ of $G$ and $\epsilon\in \mathbb R^+$, there exists a finite subset $F$ of $G$ such that}
\[\lvert KF\cap X\rvert<\left (1+\epsilon \right )\lvert F\cap X\rvert.\]
\end{definition}

In Theorem 2.21, we show that the above condition characterizes amenability of subsets of groups.

\begin{theorem} Let $G$ be a group and $X$ a subset of $G$. Then the following two statements are equivalent.
\begin{enumerate*}
\item $X$ is amenable with respect to $G$.
\item $G$ satisfies the F\o lner condition with respect to $X$. 
\end{enumerate*}
\end{theorem}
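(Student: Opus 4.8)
The plan is to prove Theorem 2.23, the equivalence of amenability of a subset $X\subseteq G$ with the F\o lner condition of $G$ with respect to $X$, by reducing it to Tarski's theorem (Theorem 2.21) via a combinatorial argument using Hall's marriage theorem (or equivalently the max-flow/min-cut or a Rado-type selection principle on infinite bipartite graphs). The logical skeleton I would use is a cycle of implications: (a) if $G$ satisfies the F\o lner condition with respect to $X$, then $X$ is amenable with respect to $G$; (b) if $X$ is $G$-paradoxical, then $G$ fails the F\o lner condition with respect to $X$. Combined with Theorem 2.21 (which says $X$ not amenable $\iff$ $X$ $G$-paradoxical), these two implications close the loop: (a) gives (ii)$\Rightarrow$(i), and (b) contraposed gives (i)$\Rightarrow$[not $G$-paradoxical]$\Rightarrow$(ii).

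For implication (a), I would argue by contraposition: suppose $X$ is not amenable with respect to $G$, so by Theorem 2.21 it is $G$-paradoxical, and by Lemma 2.20 there is a partition $\{A,B\}$ of $X$ with $A\sim_G X\sim_G B$. Using Lemma 2.18, write these equidecompositions via finite sets $K_A,K_B\subseteq G$ and bijections $\alpha:X\to A$, $\beta:X\to B$ with $\alpha(x)x^{-1}\in K_A$ and $\beta(x)x^{-1}\in K_B$ for all $x$. Set $K:=K_A^{-1}\cup K_B^{-1}$ (a finite symmetric set containing $1$). Then for any finite $F\subseteq G$, every element of $(F\cap X)$ has its $\alpha$-image and $\beta$-image landing in $KF\cap X$ via translations by elements of $K$; since $A$ and $B$ are disjoint, one obtains $|KF\cap X|\ge |\alpha(F\cap X)| + |\beta(F\cap X)| = 2|F\cap X|$, which contradicts the F\o lner condition taken with $\epsilon=1$ (actually with any $\epsilon<1$). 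This gives (a).

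For implication (b) — equivalently, the forward direction (i)$\Rightarrow$(ii) — I would show directly that if $X$ is amenable with respect to $G$ then the F\o lner condition holds; this is the substantive half and is where I expect the main obstacle to lie. The natural approach is the classical Følner-type argument adapted to subsets: assume the F\o lner condition fails, so there exist a finite $K\subseteq G$ (which we may enlarge to be symmetric and contain $1$) and $\epsilon>0$ with $|KF\cap X|\ge(1+\epsilon)|F\cap X|$ for every finite $F\subseteq G$; then build a bipartite graph on two copies of $X$ joining $x$ to $kx$ whenever $kx\in X$, $k\in K$, and use Hall's theorem (in the form valid for locally finite bipartite graphs, each vertex having degree at most $|K|$) together with the expansion inequality to extract, after passing to a suitable power $K^n$ with $(1+\epsilon)^n>2$, an injection of $X$ into $X$ whose image misses "enough" of $X$, and iterate to produce a $G$-paradoxical decomposition of $X$; then invoke Theorem 2.21. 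The delicate points are: verifying the Hall condition $|N(S)\cap X|\ge 2|S|$ holds for all finite $S\subseteq X$ after replacing $K$ by $K^n$ (this is exactly the doubling consequence of iterated $(1+\epsilon)$-expansion), and handling the infinite bipartite matching correctly (each vertex has bounded degree, so the compactness/König argument applies). I expect assembling the paradoxical decomposition from the two injections $X\hookrightarrow X$ with disjoint images — i.e.\ going from a "$2$-to-$1$ type" expansion to an honest paradox in the sense of Definition 2.19 — to be the part requiring the most care, and I would lean on the standard argument from \cite{Wagon} (the proof that doubling implies paradoxicality) rather than reproving it from scratch.

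Finally, I would remark that, since Theorem 2.21 is being used as a black box, the only genuinely new content is the translation of F\o lner-type expansion into the language of equidecomposability for a subset $X$ rather than the whole group $G$; once implications (a) and (b) above are in place, the three-way equivalence with amenability of $X$ follows formally, and one records Theorem 2.23 as stated.
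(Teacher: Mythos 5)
Your proposal is correct and takes essentially the same route as the paper: both directions are proved by contraposition through Tarski's theorem, with the backward direction being the counting argument from a paradoxical decomposition and the substantive forward direction using iterated failure of the F\o lner condition to get a doubling inequality and then Hall's marriage theorem on the locally finite bipartite graph $(X,X,\{(x,y):y\in Kx\})$ --- the paper invokes the perfect one-to-two matching form (Theorem 2.24), which hands you the partition $\{A,B\}$ of $X$ and the two bijections in one step, so the ``doubling implies paradoxical'' assembly you worried about requires no extra work beyond Lemma 2.18. Two cosmetic slips worth fixing: in your argument for (a) the translating set should be $K_A\cup K_B$ (or its symmetrization) rather than $K_A^{-1}\cup K_B^{-1}$, and the implication labelled (b) in your opening skeleton is stated as the converse of what you actually (and correctly) prove in the detailed paragraph that follows.
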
 

The assertion (i)$\Longrightarrow$(ii) in Theorem ~2.21 follows immediately from \cite[Theorem~4.9]{Rosenblatt1}, which is proved using analysis. Below we supply an alternative justification for this implication that is, instead, graph-theoretic; it is an adaptation of the argument for \cite[Theorem 4.9.2((c)$\Longrightarrow$(e))]{CSC}. 
For this approach, we require the notion of a \emph{bipartite graph}, which is a triple $\mathcal G=(V, W, E)$ where $V$ and $W$ are sets and $E\subseteq V\times W$. The elements of $V$ and $W$ are the \emph{left vertices} and \emph{right vertices}, respectively, and the elements of $E$ are the \emph{edges}. For any $v_0\in V$ and $w_0\in W$, we define the \emph{right neighborhood} $\mathcal{N}_ R(v_0)$ of $v_0 $ and the \emph{left neighborhood} $\mathcal{N}_ L(w_0)$ of 
$w_0$ as follows:
\[\mathcal{N}_ R(v_0):=\{w\in W\ :\ (v_0,w)\in E\};\ \ \ \mathcal{N}_ L(w_0):=\{v\in V\ :\ (v,w_0)\in E\}.\]
For subsets $A\subseteq V$ and $B\subseteq W$, we write
\[\mathcal{N}_ R(A):=\bigcup_{a\in A} \mathcal{N}_ R(a),\ \ \ \mathcal{N}_ L(B):=\bigcup_{b\in B} \mathcal{N}_ L(b).\]
If the left and right neighborhoods of each vertex are finite, then the graph is said to be \emph{locally finite}. 

For a bipartite graph $\mathcal G=(V, W, E)$ and a positive integer $n$, a \emph{perfect one-to-$n$ matching} is a subset $M$ of $E$ satisfying the following two conditions.
\begin{enumerate*}
\item For each $v\in V$, there are exactly $n$ elements $w\in W$ such that $(v, w)\in M$.
\item For each $w\in W$, there is a unique $v\in V$ such that $(v,w)\in M$. 
\end{enumerate*}
Invoking the Axiom of Choice, we ascertain that the existence of a perfect one-to-$n$ matching is equivalent to the existence of a partition $\{A_1,\dots,A_n\}$ of $W$ together with bijections $\alpha_i:V\to A_i$, $i=1,\dots,n$, such that $(v,\alpha_i(v))\in E$ for every $v\in V$ and $i=1,\dots,n$. 

Our proof of Theorem~2.21 relies on the following theorem, which was proved for finite graphs by P. Hall \cite{PHall} in 1935 and generalized to locally finite graphs by M. Hall \cite{MHall} in 1948.  

\begin{theorem}[{P. Hall, M. Hall, \cite[Theorem H.4.2]{CSC}}] Let $n\in \mathbb Z^+$ and let $\mathcal G=(V, W, E)$ be a locally finite bipartite graph. Then $\mathcal G$ admits a perfect one-to-$n$ matching if and only if $\mathcal G$ satisfies the following two conditions.
\begin{enumerate*}
\item For every finite set $A\subseteq V$, $\displaystyle{\lvert \mathcal N_R(A)\rvert \geq n\lvert A\rvert}$.
\item For every finite set $B\subseteq W$, $\displaystyle{\lvert \mathcal N_L(B)\rvert \geq \frac{1}{n}\lvert B\rvert}$.
\end{enumerate*}
\end{theorem}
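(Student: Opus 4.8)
The plan is to treat the two implications separately. Necessity is a direct counting argument: given a perfect one-to-$n$ matching $M$ and a finite set $A\subseteq V$, property (1) of the matching attaches $n$ matched neighbours to each $v\in A$ and property (2) forces these $n$-element sets to be pairwise disjoint as $v$ runs over $A$, so $\mathcal{N}_R(A)$ has at least $n\lvert A\rvert$ elements; symmetrically, for a finite $B\subseteq W$, property (2) assigns each $w\in B$ a matched partner lying in $\mathcal{N}_L(B)$, while property (1) allows each vertex of $V$ to be the partner of at most $n$ of these, whence $\lvert\mathcal{N}_L(B)\rvert\ge\lvert B\rvert/n$.

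For sufficiency I would first reduce to the case $n=1$ by vertex splitting. Replace each $v\in V$ by $n$ clones $(v,1),\dots,(v,n)$, each joined to exactly the old neighbours of $v$; this produces a locally finite bipartite graph $\mathcal{G}'=(V\times\{1,\dots,n\},\,W,\,E')$. Projecting finite sets of new left vertices down to $V$ shows that hypothesis (i) for $\mathcal{G}$ yields the $n=1$ left condition $\lvert\mathcal{N}_R(A')\rvert\ge\lvert A'\rvert$ for $\mathcal{G}'$, and duplicating left neighbourhoods $n$ times shows that hypothesis (ii) yields $\lvert\mathcal{N}_L(B)\rvert\ge\lvert B\rvert$ for finite $B\subseteq W$. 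An edge-respecting bijection $V\times\{1,\dots,n\}\to W$, i.e.\ a perfect one-to-$1$ matching of $\mathcal{G}'$, then pulls back to a perfect one-to-$n$ matching of $\mathcal{G}$: the $n$ clones of $v$ supply its $n$ distinct matched partners, and each $w$ is still used exactly once.

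It remains to build an edge-respecting bijection $V\to W$ when the two one-sided Hall conditions hold. The crucial ingredient is M.\ Hall's one-sided theorem: a locally finite bipartite graph with $\lvert\mathcal{N}_R(A)\rvert\ge\lvert A\rvert$ for every finite $A\subseteq V$ has a matching saturating $V$. I would deduce this from the finite case (P.\ Hall's marriage theorem) by compactness: the product $\prod_{v\in V}\mathcal{N}_R(v)$ of finite nonempty sets is compact by Tychonoff's theorem, the subset of injective selection functions is closed (it is an intersection over pairs of left vertices of conditions that are clopen precisely because each $\mathcal{N}_R(v)$ is finite), and it is nonempty because any finitely many of the injectivity conditions involve only finitely many left vertices, on which P.\ Hall's theorem produces a saturating matching that is then extended arbitrarily. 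Applying this to $\mathcal{G}$ gives a matching $M_1$ saturating $V$, and applying it to the transposed graph $(W,V,E^{-1})$, whose left condition is exactly hypothesis (ii) with $n=1$, gives a matching $M_2$ saturating $W$. Finally I would splice $M_1$ and $M_2$ together along the symmetric difference $M_1\triangle M_2$: every vertex has degree at most $2$ in it, so its components are finite even cycles, one-way-infinite rays, two-way-infinite rays, and finite paths, and each vertex outside $M_1\triangle M_2$ is already matched by an edge of $M_1\cap M_2$. Since $M_1$ saturates $V$ and $M_2$ saturates $W$, examining the edge at an endpoint of a finite-path component yields a parity contradiction, so no finite-path component occurs; on a cycle or a double ray alternate edges saturate every vertex of the component, and on a ray $v_0-v_1-v_2-\cdots$ the edges $v_0v_1,v_2v_3,v_4v_5,\dots$ do the same. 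The union of all these choices is the desired bijection.

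I expect the main obstacle to lie in the infinite combinatorics of this last paragraph: making the compactness proof of M.\ Hall's theorem airtight (the closedness of the injectivity condition genuinely uses local finiteness) and, above all, verifying that $M_1\triangle M_2$ has no finite-path component, which is exactly the point where the two one-sided saturation hypotheses are consumed. Necessity, the vertex-splitting reduction, and the appeal to the finite marriage theorem are all routine by comparison.
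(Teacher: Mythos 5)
Your proof is correct. Note that the paper does not prove this statement at all: it is imported verbatim from Ceccherini-Silberstein and Coornaert (Theorem H.4.2, the ``Hall harem theorem''), so the only comparison available is with that source, and your argument is essentially the standard one found there --- necessity by counting, reduction to $n=1$ by cloning each left vertex $n$ times, the one-sided infinite marriage theorem via compactness over $\prod_{v}\mathcal N_R(v)$ (where local finiteness makes the injectivity conditions clopen), and the Cantor--Schr\"oder--Bernstein-style splicing of the two one-sided saturating matchings along the components of $M_1\triangle M_2$, with the alternating-edge parity argument ruling out finite path components. The two points you flag as delicate are exactly the right ones, and you resolve both correctly.
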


\begin{proof}[Proof of Theorem 2.21] 
We prove (i)$\Longrightarrow$(ii) by establishing the contrapositive. Suppose, therefore, that $G$ fails to satisfy the F\o lner condition with respect to $X$. This means that there is a finite subset $K$ 
of $G$ and a positive real number $\epsilon$ such that $\lvert KF\cap X\rvert\geq \left (1+\epsilon\right )\lvert F\cap X\rvert$ for every finite set $F\subseteq G$. Hence $\lvert K^iF\cap X\rvert\geq \left (1+\epsilon \right )^i\lvert F\cap X\rvert$ for every $i\in \mathbb Z^+$ and finite set $F\subseteq G$. Therefore, by replacing $K$ with one of its powers, we can assume that $\lvert KF\cap X\rvert\geq 2\lvert F\cap X\rvert$ for every finite subset $F$ of $G$. By adding elements to $K$, we can assume further that $K^{-1}=K$.

Let $\mathcal G$ be the locally finite bipartite graph $(X, X, E)$ where $E:=\{(x,y)\in X\times X: y\in Kx\}$. If $F$ is an arbitrary finite subset of $X$, then
\[\lvert \mathcal N_R(F)\rvert = \lvert \mathcal N_L(F)\rvert = \lvert KF\cap X\rvert \geq 2\lvert F\rvert.\]
Therefore $\mathcal G$ satisfies conditions (i) and (ii) in Theorem 2.22 for $n=2$. As a result, we have a partition $\{A,B\}$ of $X$ and bijections $\alpha:X\to A$, $\beta:X\to B$ such that 
$\alpha(x)x^{-1}, \beta(x)x^{-1}\in K$ for every $x\in X$. It follows, then, from Lemma 2.16 that $A\sim_G X\sim_G B$. In other words, $X$ is $G$-paradoxical. Hence Theorem 2.19 implies that $X$ is not amenable with respect to $G$. 

To show the implication (ii)$\implies$(i), we again prove the contrapositive. Assume, then,
that $X$ is not amenable with respect to $G$. Appealing to Theorem~2.19 and Lemma~2.16, we obtain a finite set $K\subseteq G$, disjoint subsets $A, B$ of $X$, a bijection $\alpha:X\to A$, and a bijection $\beta:X\to B$ such that $\alpha(x)x^{-1}, \beta(x)x^{-1}\in K$ for every $x\in X$. Let $F$ be an arbitrary finite subset of $G$. Since $\alpha(F\cap X)\subseteq KF\cap A$ and $\beta(F\cap X)\subseteq KF\cap B$, we deduce that $\lvert KF\cap X\rvert\geq 2\lvert F\cap X\rvert$. 
This implies that $G$ fails to satisfy the F\o lner condition with respect to $X$. 
\end{proof}

\section{Graded rings with UGN}

In this section, we establish our main results on graded rings, Theorem~A, Theorem~B, Corollary~A, Theorem~C, and Theorem~D. We conclude the section by presenting two applications of Theorem B: in the first (Corollary 3.9), we show that the Weyl rings over a UGN ring have UGN; in the second (Corollary 3.10), we prove that a twisted group ring of a free-by-amenable group over a UGN-ring must possess UGN. 

\subsection{Theorem A, right amenability, Corollary A, and Theorem C}

\begin{proof}[Proof of Theorem A] The ``only if" part follows from Lemma 2.7. We will establish the ``if" statement by proving its contrapositive. Suppose that $R$ has BGN. This means, by Proposition 2.2((i)$\implies$(iii)), that there is an $R$-module epimorphism $\phi:R^n\to R^m$ for some pair of positive integers $m,n$ such that $m>nr$. Put $X:={\rm Supp}(R)$ and
let $K\subseteq X$ be the union of the supports of all the entries in the matrix representation of $\phi$ with respect to the standard bases. By Theorem 2.21, there is a finite subset $F$ of $G$
such that, writing  $U:=K^{-1}F$, we have

\begin{equation} |U\cap X| < \frac{m}{nr}|F\cap X|.\end{equation}

For any positive integer $t$ and subset $V$ of $G$, let $R^t_{V}$ be the $R_1$-submodule of $R^t$ consisting of all the elements with support contained in $V$. In addition, let 
$\pi^t_V~:~R^t\to~R^t_V$ be the projection map and notice that this map is an $R_1$-module epimorphism. We claim that $R_F^m=\pi^m_F\, \phi\left (R^n_{U}\right )$. To show this, let $y\in R^m_F$. Then $y=\phi(x+x')$, where $x\in R^n_{U}$ and $x'\in R^n$ with ${\rm Supp}(x')\cap U=\emptyset$. But ${\rm Supp}[\phi(x')]\subseteq K\cdot {\rm Supp}(x')$. Thus ${\rm Supp}[\phi(x')]\cap F=\emptyset$; that is, $\pi^m_F\, \phi(x')=0$. Therefore $y=\pi_F^m\, \phi(x)$, proving the claim.  

Let $\phi_{U}:R^n_{U}\to R^m$ be the restriction of $\phi$ to $R^n_{U}$, which is an $R_1$-module homomorphism. In view of the claim made in the preceding paragraph, the composition $\pi^m_F\, \phi_{U}: R^n_U\to R^m_F$ is an $R_1$-module epimorphism. Moreover, applying condition (i), we ascertain that there is an $R_1$-module isomorphism $R^n_{U}\to \left (R_1\right )^k$ for some $k\in \mathbb Z^+$ with $k\leq nr |U\cap X|$. Similarly, there is an integer $l\geq m|F\cap X|$ such that $R^m_F\cong \left ( R_1\right )^l$ as $R_1$-modules. Thus $\pi^m_F\, \phi_{U}$ induces an $R_1$-module epimorphism $\left (R_1\right )^k\to \left (R_1\right )^l$. But it follows from (3.1) that $k<l$, which implies that $R_1$ has BGN.
\end{proof}

\begin{remark}{\rm We point out that the hypothesis (i) in Theorem A can be replaced by the more general statement (i$'$) below. 
\vspace{5pt}

\noindent (i$'$) There is a positive integer $s$ such that, for every $g\in {\rm Supp}(R)$, the following two statements hold.
\begin{itemize}
\item $R_g$ can be generated as an $R_1$-module by $s$ elements.
\item The $R_1$-module $\left (R_g\right )^s$ contains a copy of $R_1$ as a direct summand.
\end{itemize}}
\end{remark}
To prove this, one can use essentially the same argument as for Theorem A, but with $m$ chosen so that $m>ns^2$ and $r$ in (3.1) replaced by $s^2$. In this case, the key $R_1$-module epimorphism to consider is one from $R^{ns}_U$ to $R^{ms}_F$ induced by $\pi^m_F\, \phi_U$. 
\vspace{5pt}

Our notion of an amenable subset of a group should more precisely be referred to as \emph{left amenability} since it requires the left invariance of the measure (see Definition 1.2). If this is replaced by right invariance, then the subset can be said to be \emph{right amenable} with respect to the group. It is then obvious that a subset $X$ of a group $G$ is right amenable with respect to $G$  if and only if $X^{-1}$ is left amenable with respect to $G$. This observation allows us to enunciate a dual version of Theorem A.

\begin{corollary} Let $G$ be a group and $R$ a ring graded by $G$ such that the following two conditions hold.
\begin{enumerate*} 
\item There is a positive integer $r$ such that, for every $g\in G$,  $R_g\cong (R_1)^i$ as left $R_1$-modules for some $i=0,\dots, r$.
\item The support of $R$ is right amenable with respect to $G$.
\end{enumerate*}
Then $R$ has UGN if and only if $R_1$ has UGN. 
\end{corollary}

\begin{proof} The ``only if" statement follows from Lemma 2.7. To prove the converse, we employ the ring $R^{\rm op}$ and grade it by $G$ in the following way: $\left (R^{\rm op}\right )_g:=R_{g^{-1}}$ for every $g\in G$. Note that we then have $\left (R^{\rm op}\right )_1=\left (R_1\right )^{\rm op}$ as rings. Moreover, for each $g\in G$, there is an integer $i\in [0,r]$ such that $\left (R^{\rm op}\right )_g\cong \left (\left (R^{\rm op}\right )_1\right )^i$ as right $\left (R^{\rm op}\right )_1$-modules. 

Suppose that $R_1$ has UGN. Then $\left (R^{\rm op}\right )_1$ has UGN by Lemma 2.6. Also, since ${\rm Supp}\left (R^{\rm op}\right )=\left [{\rm Supp}\left (R\right )\right ]^{-1}$, we have that ${\rm Supp}\left (R^{\rm op}\right )$ is a left amenable subset of $G$.  Therefore, by Theorem A, $R^{\rm op}$ has UGN.  It follows, then, from Lemma 2.6 that $R$ has UGN.
\end{proof}

With the aid of Corollary 3.1, we can prove Corollary A very easily.

\begin{proof}[Proof of Corollary A] The case where the homogenous components are free right $R_1$-modules of bounded rank follows immediately from Theorem A; the left-module case is a consequence of Corollary 3.1.
\end{proof}

The best known examples of nonempty subsets of amenable groups that are neither left amenable nor right amenable are free subsemigroups on two generators. An elementary amenable group contains such a subsemigroup if and only if it is not locally virtually nilpotent (see \cite[Theorem 3.2$'$]{Chou}), that is, not supramenable.  Below we present an example \cite{Lorensen}, suggested to the first author by Nicolas Monod, of a subset of an amenable group that is right amenable but not left amenable. 

\begin{example} {\rm Let $k>1$ be an integer and $G$ the Baumslag-Solitar group $\mathrm{BS}(1,k)$; that is, 
\[G:=\langle a, b\ :\ bab^{-1}=a^k\rangle.\]
Let $A$ and $B$ be the infinite cyclic subgroups generated by $a$ and $b$, respectively. Notice that $G=A^G\rtimes B$, where $A^G\cong \mathbb Z[1/k]$ is the normal closure of $A$ in $G$. 
Since $G$ is metabelian, it is an amenable group. 

Put $X:=AB$. We claim that $X$ is right amenable but not left amenable.
To prove the latter assertion, write $X_0:=\langle a^k\rangle B$. Observe that $X_0$ and $aX_0$ are disjoint subsets of $X$, and that $bX=X_0$. Hence the existence of a function $\mu:\mathcal{P}(G)\to [0,\infty]$ satisfying properties (i), (ii), and (iii) in Definition 1.2 would imply
$$2=\mu(X)+\mu(X)=\mu(X_0)+\mu(X_0)=\mu(X_0)+\mu(aX_0)\leq \mu(X)=1.$$ 
Therefore $X$ is not left amenable with respect to $G$. 

To establish the right amenability of $X$, we invoke the following proposition, due to Rosenblatt \cite{Rosenblatt1}. He states the proposition in terms of left amenability; we have repurposed it for right amenability. Following Rosenblatt, we employ the following notation: for any $n$-tuple $(u_1,\dots,u_n)$ and any set $X$, 
$$\|X\cap(u_1,\dots,u_n)\|:=|\{ i=1,\dots, n:u_i\in X\}|.$$

\begin{proposition}[{Rosenblatt \cite[Corollary 1.5]{Rosenblatt1}}] Let $G$ be an amenable group. A subset $X$ of $G$ is right amenable with respect to $G$ if and only if, for every $m$-tuple $(u_1,\dots, u_m)$ and $n$-tuple $(v_1,\dots,v_n)$ from $G$ such that $m<n$, there exists $g\in G$ such that 
\[ \|gX\cap (u_1,\dots,u_m)\|<\|gX\cap (v_1,\dots, v_n)\|.\]
\end{proposition}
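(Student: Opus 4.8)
The statement above is the right-amenable reformulation of \cite[Corollary~1.5]{Rosenblatt1}, which Rosenblatt establishes for left amenability; the plan is to derive it from that original result together with the elementary fact, recorded above, that a subset $X$ of $G$ is right amenable with respect to $G$ if and only if $X^{-1}$ is left amenable with respect to $G$. Thus the whole argument is a translation exercise: apply Rosenblatt's left-amenable criterion to the set $Y:=X^{-1}$ and then rewrite the resulting combinatorial condition in terms of $X$.

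In more detail, Rosenblatt's criterion --- in its formulation using right translates of $Y$ --- applied to $Y=X^{-1}$ states that $X^{-1}$ is left amenable with respect to $G$ precisely when, for every $m$-tuple $(u_1,\dots,u_m)$ and $n$-tuple $(v_1,\dots,v_n)$ from $G$ with $m<n$, there is an $h\in G$ satisfying $\|X^{-1}h\cap(u_1,\dots,u_m)\|<\|X^{-1}h\cap(v_1,\dots,v_n)\|$. For fixed $h,u\in G$ one has $u\in X^{-1}h$ if and only if $u^{-1}\in h^{-1}X$, so, writing $g:=h^{-1}$, we obtain $\|X^{-1}h\cap(u_1,\dots,u_m)\|=\|gX\cap(u_1^{-1},\dots,u_m^{-1})\|$ and the analogous identity for the $v$-tuple. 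Since $h\mapsto h^{-1}$ is a bijection of $G$ and $(w_1,\dots,w_r)\mapsto(w_1^{-1},\dots,w_r^{-1})$ is a bijection on the set of $r$-tuples, the condition just displayed for $X^{-1}$ is equivalent to the assertion that, for every $m$-tuple $(u_1,\dots,u_m)$ and $n$-tuple $(v_1,\dots,v_n)$ from $G$ with $m<n$, there is a $g\in G$ with $\|gX\cap(u_1,\dots,u_m)\|<\|gX\cap(v_1,\dots,v_n)\|$. Combining this equivalence with the fact that $X$ is right amenable with respect to $G$ if and only if $X^{-1}$ is left amenable with respect to $G$ yields the proposition.

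The one step needing genuine care --- and hence the main potential pitfall --- is matching Rosenblatt's handedness conventions: one must check on which side of $Y$ his left-amenable criterion places the quantified element of $G$, and correspondingly whether the passage to $X$ requires the substitution $g=h^{-1}$ used above (which produces $gX$ in the conclusion, exactly as stated) or no substitution at all (which would instead produce $Xg$). Once the sides are reconciled, everything else is formal relabelling. The proposition then feeds directly into the completion of the Example: for any prescribed tuples, a sufficiently large power of $b^{-1}$ serves as the element $g$, since it translates $X=AB$ onto a set containing every element listed in the tuples, forcing the strict inequality $m<n$.
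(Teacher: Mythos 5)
Your argument is correct and matches the paper's treatment: the paper offers no proof of this proposition beyond citing Rosenblatt's Corollary 1.5 and remarking that it has been ``repurposed'' from left to right amenability, and your inversion argument ($X\mapsto X^{-1}$, $h\mapsto h^{-1}$, tuples inverted entrywise, combined with the already-recorded equivalence between right amenability of $X$ and left amenability of $X^{-1}$) is precisely the translation that remark alludes to. The only point of genuine care --- checking on which side Rosenblatt's left-amenable criterion places the translating element --- is one you have correctly identified and flagged yourself.
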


To complete the argument using this proposition, let $\{t_{\alpha}:\alpha\in I\}$ be a complete set of coset representatives of $A$ in $A^G$. Then 
$\{t_{\alpha}X:\alpha \in I\}$ is a partition of $G$. In order to apply Rosenblatt's proposition, we let $(u_1,\dots,u_m)$ and $(v_1,\dots, v_n)$ be finite sequences from $G$ with $m<n$. Then 
$$m=\sum_{\alpha\in I} \|t_{\alpha}X\cap (u_1,\dots, u_m)\|\ \ \mbox{and}\ \  n=\sum_{\alpha\in I} \|t_{\alpha}X\cap (v_1,\dots, v_n)\|.$$
Thus it must be the case that, for some $\alpha\in I$, 
$$\|t_{\alpha}X\cap (u_1,\dots,u_m)\|<\|t_{\alpha}X\cap (v_1,\dots, v_n)\|.$$
Hence $X$ is right amenable with respect to $G$.}
\end{example}

Next we establish Theorem C, dealing with projective gradings by locally finite groups. For the concepts from Morita theory invoked in the proof, we refer the reader to either \cite[\S 6]{Anderson-Fuller} or \cite[\S 18]{Lam}. 

\begin{proof}[Proof of Theorem C] The ``only if" portion of the statement follows from Lemma 2.7. To prove the converse, we assume that $R_1$ has UGN. In view of Lemma 2.6, we only need to consider the case where the homogeneous components of $R$ are finitely generated projective right $R_1$-modules. Also, because UGN is inherited by direct limits (see Lemma 2.10), it suffices to treat the case where $G$ is finite. This means that $R$ is finitely generated and projective as an $R_1$-module. Since $R_1$ is an $R_1$-module direct summand in $R$, we have further that $R$ is a generator for $\mathfrak M_{R_1}$.  These properties make $R$ a progenerator in $\mathfrak M_{R_1}$, rendering ${\rm End}_{R_1}(R)$ Morita equivalent to $R_1$. It follows, then, from Corollary 2.3 that ${\rm End}_{R_1}(R)$ has UGN. Therefore, being isomorphic to a subring of this endomorphism ring, $R$ must also have UGN.
\end{proof} 

\subsection{Theorem D and its corollaries}

Before proving Theorem D, we establish the following property of translation rings, generalizing both the result and argument contained in \cite[\S 3]{Elek} (see also \cite[Proposition~2.2]{Elek2}).

\begin{proposition} Let $G$ be a group and $X$ a subset of $G$. If $X$ is not amenable with respect to $G$, then, for any ring $R$, \[\left (T_G(X,R)\right )^2\cong  T_G(X,R)\] as $T_G(X,R)$-modules, so that ${\rm gn}\left (T_G(X,R)\right )=1$.
\end{proposition}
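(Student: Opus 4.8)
The plan is to derive from the non-amenability of $X$ a paradoxical decomposition of $X$, and to convert its two halves into a pair of orthogonal idempotents of $T:=T_G(X,R)$, each summing with the other to $1$ and each generating, as a right $T$-module, a copy of $T$; this at once gives $T\oplus T\cong T$.

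First I would invoke Theorem 2.21 to conclude that $X$ is $G$-paradoxical, then Lemma 2.20 to obtain a partition $\{A,B\}$ of $X$ with $A\sim_G X\sim_G B$, and finally Lemma 2.18 to produce finite subsets $K_A,K_B\subseteq G$ together with bijections $\alpha:X\to A$ and $\beta:X\to B$ such that $\alpha(x)x^{-1}\in K_A$ and $\beta(x)x^{-1}\in K_B$ for every $x\in X$. (The case $X=\emptyset$ is trivial, since then $T$ is the zero ring, so I may assume $X\neq\emptyset$.) Next I would write down four $X\times X$ matrices over $R$: let $P_A$ be the zero-one matrix with $P_A(u,x)=1$ precisely when $u=\alpha(x)$, let $Q_A:=P_A^t$, and define $P_B,Q_B$ the same way from $\beta$. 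A short verification places all four in $T$: for instance $P_A(u,x)\neq 0$ forces $x=\alpha^{-1}(u)$, whence $x\in K_A^{-1}u$, so the finite set $K_A^{-1}$ witnesses that $P_A\in T_G(X,R)$, and the other three are handled identically using $K_A$, $K_B$, $K_B^{-1}$. The identities to record, all of them routine matrix computations, are $Q_AP_A=Q_BP_B=I$ (injectivity of $\alpha,\beta$), $Q_AP_B=Q_BP_A=0$ (because $A\cap B=\emptyset$), and $P_AQ_A+P_BQ_B=I$ (because $P_AQ_A$ and $P_BQ_B$ are the diagonal idempotents supported on $A$ and on $B$, and $A\cup B=X$).

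From these relations, $e:=P_AQ_A$ and $f:=P_BQ_B$ are orthogonal idempotents of $T$ with $e+f=1$, so $T=eT\oplus fT$ as right $T$-modules; the mutually inverse $T$-module maps $t\mapsto P_At$ and $s\mapsto Q_As$ (using $Q_AP_A=I$ and $P_AQ_A=e$) identify $eT$ with $T$, and $Q_BP_B=I$, $P_BQ_B=f$ identify $fT$ with $T$. Hence $(T_G(X,R))^2\cong T_G(X,R)$ as $T_G(X,R)$-modules. Iterating this isomorphism (or applying Lemma 2.1 with $A=T$) yields a $T$-module epimorphism $T^1\to T^m$ for every $m\in\mathbb Z^+$, so by Proposition 2.4 we get ${\rm gn}(T_G(X,R))=1$, the value $1$ being unavoidable since it is the least positive integer.

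I do not expect a genuine obstacle here: the reduction to a paradoxical decomposition is immediate from the results of \S 2.4, and the rest is a direct construction. The one spot demanding care is the bookkeeping that certifies $P_A,P_B,Q_A,Q_B\in T_G(X,R)$ — this is exactly where the finiteness of $K_A$ and $K_B$ supplied by Lemma 2.18 is used — together with keeping straight the orientation of the defining condition ``$M(x,y)=0$ whenever $y\notin Kx$'' when passing between $\alpha$, $\alpha^{-1}$ and their transposes.
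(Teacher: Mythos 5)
Your proposal is correct and follows essentially the same route as the paper: the same reduction via Theorem 2.21, Lemma 2.20, and Lemma 2.18 to a paradoxical decomposition, and the same four zero-one matrices (your $P_A,Q_A,P_B,Q_B$ are the transposes/copies of the paper's $M,N,M^t,N^t$) satisfying the identities $Q_AP_A=Q_BP_B=I$, $Q_AP_B=Q_BP_A=0$, $P_AQ_A+P_BQ_B=I$. The paper packages these as the block-matrix equations (3.1)--(3.2) rather than as a pair of orthogonal idempotents, but this is only a cosmetic difference.
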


\begin{proof}
 Assume that $X$ is not amenable with respect to $G$.  If $X=\emptyset$, then the conclusion is clearly true. Suppose $X\neq \emptyset$. Applying Theorem~2.19, Lemma~2.18, and Lemma~2.16, we acquire a finite set $K\subseteq G$, a partition $\{A,B\}$ of $X$, and bijections $\alpha:X\to A$ and $\beta:X\to B$ such that $\alpha(x)\, x^{-1},\ \beta(x)\, x^{-1}\in K$ for all $x\in X$.  
 Now let $R$ be a ring and define the elements $M$ and $N$ of $T:=T_G(X,R)$ as follows: 
\[M(x,y):=\begin{cases} 1 & \mbox{if}\ y=\alpha(x)\\ 0 & \mbox{if}\ y\neq \alpha(x) \end{cases},\ \ \ \ \ N(x,y):=\begin{cases} 1 & \mbox{if}\ y=\beta(x)\\ 0 & \mbox{if}\ y\neq \beta(x)\end{cases}\]
for all $x, y\in X$. 

Notice that the ring $T_G(X,R)$ is closed under transposition, so that $M^t, N^t\in T_G(X,R)$.
 Our aim is to establish the equations (3.2) and (3.3) below; these will imply $T^2\cong T$ as $T$-modules. 
\begin{equation}\begin{pmatrix}M\\
N \end{pmatrix} \begin{pmatrix}M^t & N^t\end{pmatrix}=\begin{pmatrix} 1_T & 0\\
0 & 1_T\end{pmatrix}.\end{equation}
\begin{equation} \begin{pmatrix}M^t & N^t
 \end{pmatrix} \begin{pmatrix}M\\
 N\end{pmatrix}=\begin{pmatrix} 1_T\end{pmatrix}. \end{equation}

Let $x, x'$ be arbitrary elements of $X$. Then
\[(MM^t)(x, x')=\sum_{y\in X}M(x, y)M(x',y).\]
If $x=x'$, then this sum is plainly equal to $1$. If $x\neq x'$, then $\alpha(x)\neq \alpha(x')$, which means $(MM^t)(x,x')=0$. Hence $MM^t=1_T$. Similarly, $NN^t=1_T$.  
Moreover, 
\[(MN^t)(x,x')=\sum_{y\in X} M(x,y)N(x',y).\]
Since $\alpha(x)\neq \beta(x')$, this sum is equal to $0$. Thus $MN^t=0$. By the same token, $NM^t=0$. We have shown, then, that (3.2) holds.

Now we calculate the products $M^tM$ and $N^tN$. 
\[(M^tM)(x,x')=\sum_{y\in X}M(y,x)M(y,x')\]
for all $x, x'\in X$. This sum is equal to $1$ if there is a $y\in X$ such that $x=x'=\alpha(y)$; otherwise it is $0$. Hence $M^tM$ is diagonal with
\[(M^tM)(x,x)=\begin{cases} 1 & \mbox{if}\ x\in  A\\ 0 & \mbox{if}\ x\in B \end{cases}\]
for every $x\in X$. Similarly, $N^tN$ is diagonal, and 
\[(N^tN)(x,x)=\begin{cases} 0 & \mbox{if}\ x\in A\\ 1 & \mbox{if}\ x\in B \end{cases}\]
for all $x\in X$. Equation (3.3), then, follows. 
\end{proof}

\begin{proof}[Proof of Theorem D] The implication (ii)$\Longrightarrow$(i) follows from Proposition 3.4. For the proof of the converse, we denote the set $\{1,\dots,k\}$ by $J_k$ for any $k\in \mathbb Z^+$. Assume that (i) holds, and let $R$ be a ring. Notice that the ``only if" part of (ii) is a consequence of Lemma~2.7. To prove the ``if" part, we suppose that $T_G(X,R)$ possesses BGN and deduce that $R$ must also have BGN. Invoking Proposition~2.2((i)$\Longrightarrow$(ii)) and Proposition~2.5, we obtain 
two integers $m>n>0$, a $J_m\times J_n$ matrix $A$ with entries in $T_G(X,R)$, and a $J_n\times J_m$ matrix $B$ with entries in $T_G(X,R)$ such that $AB$ is the $J_m\times J_m$ identity matrix over $T_G(X,R)$. For each pair $i\in J_m, j\in J_n$, there are finite subsets $K_{ij}$ and $L_{ji}$ of $G$ such that $A_{ij}(x,y)=0$ whenever $y\notin K_{ij}x$ and $B_{ji}(x,y)=0$ whenever $y\notin L_{ji}x$. Now put 

\[K:=\{ 1\}\cup \bigcup_{i,j} K_{ij} \cup \bigcup_{i,j} K_{ij}^{-1} \cup \bigcup_{i,j} L_{ji} \cup \bigcup_{i,j} L_{ji}^{-1} .\]
Observe that, for all $i\in J_m$ and $j\in J_n$, we have 
 
 \[A_{ij}(x,y)=A_{ij}(y,x)=B_{ji}(x,y)=B_{ji}(y,x)=0\ \ \mbox{whenever}\ \ y\notin Kx.\] 
 
 By Theorem 2.21, $G$ satisfies the F\o lner condition with respect to $X$. This means that there is a finite subset $F$ of $G$ such that $n\lvert KF\cap X\rvert<m\lvert F\cap X\rvert$. Write $U:=KF \cap X$ and $F_X:=F\cap X$.  
 Let $A^\ast$ be the $J_m\times F_X$ by $J_n\times U$ matrix with entries given by

\[A^\ast((i,f),(j,u)):=A_{ij}(f,u)\]
for all $i\in J_m, f\in F_X, j\in J_n, u\in U$.
Moreover, let $B^\ast$ be the $J_n\times U$ by $J_m\times F_X$ matrix defined by

\[B^\ast((j,u),(i,f)):= B_{ji}(u,f)\]
for all $j\in J_n, u\in U, i\in J_m, f\in F_X$.

With the above definitions, we now look at the product $A^\ast B^\ast$ and show that it is the $J_m\times F_X$ by $J_m\times F_X$ identity matrix. Let $i, i'\in J_m$ and $f, f'\in F_X$. Then 

\[\left (A^\ast B^\ast\right )\left (\left (i,f\right ), \left (i', f'\right )\right )
=\sum_{\substack{u\in\ Kf\cap Kf'\cap X \\ j = 1,\dots, n}} A_{ij}\left (f,u\right )B_{ji'}\left (u,f'\right )\]

\noindent if $Kf\cap Kf'\cap X\neq \emptyset$; otherwise $\left (A^\ast B^\ast\right )\left (\left (i,f\right ), \left (i', f'\right )\right )=0$. Hence, in the former case, we have

\begin{equation*}
\begin{split}
\left (A^\ast B^\ast\right )\left (\left (i,f\right ), \left (i', f'\right )\right ) & = \sum_{j=1}^n \left (A_{ij}B_{ji'}\right )(f,f') \\ 
& = \left (\sum_{j=1}^n A_{ij}B_{ji'}\right )(f,f') \\ 
& = \left (AB\right )_{ii'}(f,f') \\
& = \begin{cases} 0, & i\neq i'\\
0, & i=i', f\neq f'\\
1, & i=i', f=f'.\end{cases}
\end{split}
\end{equation*}
Moreover, if $Kf\cap Kf'\cap X= \emptyset$, then $f\neq f'$.  It follows, therefore, that  
$A^\ast B^\ast$ is indeed the $J_m\times F_X$ by $J_m\times F_X$ identity matrix. Also, $A^\ast$ has $m\lvert F_X\rvert$ rows and $n\lvert U\rvert$ columns, and $B^\ast$ has $n\lvert U\rvert$ rows and $m\lvert F_X\rvert$ columns. Hence, since  $n\lvert U\rvert<m\lvert F_X\rvert$, the ring $R$ has BGN by virtue of Lemma 2.5 and Proposition 2.2((ii)$\Longrightarrow$(i)).  
\end{proof}

Theorem D supplies right away the following characterizations of amenability and supramenability.

\begin{corollary} For any group $G$, the following two assertions are equivalent.
\begin{enumerate*}
\item $G$ is amenable.
\item For any ring $R$, the ring $T(G,R)$ has UGN if and only if $R$ has UGN.
\end{enumerate*}
\end{corollary}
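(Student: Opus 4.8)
The plan is to deduce the corollary directly from Proposition~A by specializing to the case $X = G$. The only point requiring any attention is the translation between the definition of amenability given in \S 2.1 (a measure $\mathcal P(G)\to[0,1]$ with $\mu(G)=1$) and the notion of a subset being amenable with respect to a group from Definition~2.15 (a measure $\mathcal P(G)\to[0,\infty]$ with $\mu(X)=1$).

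First I would record the elementary fact, already observed right after Definition~2.15, that a group $G$ is amenable if and only if $G$ is amenable with respect to itself. In one direction, a function $\mu:\mathcal P(G)\to[0,1]$ witnessing amenability of $G$ is in particular a function $\mathcal P(G)\to[0,\infty]$ satisfying conditions (i)--(iii) of Definition~2.15 with $X=G$. Conversely, if $\mu:\mathcal P(G)\to[0,\infty]$ satisfies those conditions, then for every $A\subseteq G$ finite additivity gives $\mu(A)\le \mu(A)+\mu(G\setminus A)=\mu(G)=1$ (and $\mu(A)\ge 0$), so $\mu$ actually takes values in $[0,1]$ and hence witnesses amenability of $G$ in the sense of \S 2.1.

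Then I would apply Proposition~A with $X:=G$. By the notational convention fixed in the introduction, $T_G(G,R)=T(G,R)$, so statement (i) of Proposition~A is verbatim statement (ii) of the corollary, while statement (ii) of Proposition~A asserts precisely that $G$ is amenable with respect to $G$. Combining this with the equivalence recorded in the previous paragraph gives the equivalence of the two assertions in the corollary. There is no substantive obstacle here: the corollary is a genuine immediate consequence of Proposition~A, and the only care needed is the routine bookkeeping between the two equivalent formulations of amenability together with the identification $T_G(G,R)=T(G,R)$.
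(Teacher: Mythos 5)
Your proposal is correct and is exactly the paper's route: the corollary is stated as an immediate consequence of Proposition~A with $X=G$, using the convention $T_G(G,R)=T(G,R)$ and the observation made after Definition~2.15 that a group is amenable if and only if it is amenable with respect to itself. The extra bookkeeping you supply (that a $[0,\infty]$-valued invariant finitely additive measure with $\mu(G)=1$ automatically takes values in $[0,1]$) is accurate and fills in the only detail the paper leaves implicit.
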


\begin{corollary} For any group $G$, the following two assertions are equivalent.
\begin{enumerate*}
\item $G$ is supramenable.
\item For any ring $R$ and nonempty set $X\subseteq G$, the ring $T_G(X,R)$ has UGN if and only if $R$ has UGN.
\end{enumerate*}
\end{corollary}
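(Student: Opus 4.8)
The plan is to read off this corollary directly from Proposition A together with Definition 2.16, with no additional work beyond matching quantifiers. Recall that, by Definition 2.16, a group $G$ is supramenable precisely when every nonempty subset of $G$ is amenable with respect to $G$. On the other hand, Proposition A asserts, for each fixed subset $X\subseteq G$, that the statement ``$T_G(X,R)$ has UGN if and only if $R$ has UGN, for every ring $R$'' is equivalent to the statement ``$X$ is amenable with respect to $G$''. The corollary is therefore obtained simply by quantifying both sides of this equivalence over all nonempty $X$.

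For the implication (i)$\Longrightarrow$(ii), I would assume that $G$ is supramenable and fix an arbitrary ring $R$ and an arbitrary nonempty subset $X\subseteq G$. By Definition 2.16 the set $X$ is amenable with respect to $G$, so the implication (ii)$\Longrightarrow$(i) of Proposition A yields that $T_G(X,R)$ has UGN if and only if $R$ has UGN. Since $R$ and $X$ were arbitrary, assertion (ii) of the corollary holds. For the converse (ii)$\Longrightarrow$(i), I would fix an arbitrary nonempty subset $X\subseteq G$; assertion (ii) of the corollary says exactly that statement (i) of Proposition A holds for this particular $X$, so the implication (i)$\Longrightarrow$(ii) of Proposition A shows that $X$ is amenable with respect to $G$. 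As $X$ ranges over all nonempty subsets of $G$, Definition 2.16 gives that $G$ is supramenable.

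I do not anticipate any genuine obstacle: all the substance has already been isolated in Proposition A, and what remains is purely a bookkeeping matter. The one point deserving a moment's care is that Proposition A is a per-subset statement --- it fixes $X$ and quantifies over rings internally --- so it should be applied once for each nonempty $X$, rather than in a single application with $X$ allowed to vary.
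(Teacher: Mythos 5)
Your proposal is correct and matches the paper exactly: the paper states this corollary as an immediate consequence of Proposition A (giving no separate proof), and your argument is precisely the quantifier bookkeeping — applying Proposition A once for each nonempty $X$ and invoking Definition 2.16 — that justifies it.
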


The translation ring $T_G(X,R)$ that we have defined should properly be called  a \emph{left translation ring} since the dual concept of a \emph{right translation ring}, denoted $T_G^r(X,A)$, can be defined as the set of all $X\times X$ matrices $M$ for which there is a finite set $K\subseteq G$ such that $M(x,y)=0$ whenever $y\notin xK$ . Notice that the map $M\mapsto M^\ast$ from $T^r_G(X,R)$ to $T_G(X^{-1},R)$, where $M^\ast(x^{-1},y^{-1}):=M(x,y)$ for all $x,y\in X$, is an isomorphism of rings. 
This observation leads immediately to the following dual version of Theorem D. 

\begin{corollary} Let $G$ be a group and $X$ a subset of $G$. Then the following two statements are equivalent.
\begin{enumerate*}
\item The subset $X$ is right amenable with respect to $G$.
\item  For every ring $R$, the ring $T^r_G(X,R)$ has UGN if and only if $R$ has UGN.
\end{enumerate*}
\end{corollary}

\subsection{Theorem B and its corollaries}

We conclude this section by proving Theorem B and describing two applications. 

\begin{proof}[Proof of Theorem B] First we establish (i)$\Longrightarrow$(ii). Assume that $G$ is amenable, and let $R$ be a ring with a $G$-grading that is full and boundedly free. By Lemma 2.7, $R_1$ has UGN if $R$ has UGN. To establish the converse, suppose that $R_1$ has UGN. If the homogeneous components are free right $R_1$-modules of bounded rank, then it follows immediately from Theorem A that $R$ has UGN. For the left-module case, we just need to apply Corollary 3.1.
 
The assertions (ii)$\Longrightarrow$(iii) and (iii)$\Longrightarrow$(iv) are trivial. 
For (iv)$\Longrightarrow$(i), we prove the contrapositive. Suppose that $G$ is not amenable and let $S$ be a ring with UGN. According to Proposition 3.4, the ring $T(G,S)$ has BGN. Furthermore, Proposition 2.14 implies that $T(G,S)$ is a skew group ring $\left (\prod_G S\right )~\ast ~G$. Also, by Lemma 2.11, the ring  $\prod_G S$ has UGN. Therefore statement (iv) fails to hold.
\end{proof}
 
The best known examples of rings to which Theorem~B can be applied are crossed products. But the value of the theorem extends well beyond this realm, for there are many other sorts of rings that permit full and boundedly free gradings by amenable groups.  One well-known family of such rings are the Weyl rings.

\begin{definition}{\rm Let $R$ be a ring. The \emph{first Weyl ring} over $R$, denoted $W_1(R)$, is the $R$-ring with the presentation

\[W_1(R):=\langle x,y\ :\ xy - yx =1;\ \ rx=xr\ \mbox{and}\ ry=yr\ \mbox{for all}\ r\in R\rangle.\]

For any integer $n>1$, the \emph{$n$th Weyl ring} $W_n(R)$ over $R$ is defined by $W_n(R):=W_1\left (W_{n-1}\left (R\right )\right )$.}
\end{definition}

\begin{corollary} If $R$ is a ring with UGN, then $W_n(R)$ has UGN for every $n\in \mathbb Z^+$.
\end{corollary}

\begin{proof} It suffices to show that $S:=W_1(R)$ has UGN. For this, we grade $S$ by $\mathbb Z$ by assigning ${\rm deg}(x):=1$ and ${\rm deg}(y):=-1$. 
An appeal to \cite[Corollary 6.2]{diamond} allows us to deduce that $S$ is freely generated as an $R$-module by both the set of monomials $x^k\, y^l$ for $k, l=0, 1, 2, \dots$ and the set of monomials $y^l\, x^k$ for $k, l=0, 1, 2, \dots$ . As a result, $S_0$ is a free $R$-module on the sets $\mathcal{B}:=\{x^k\, y^k\ :\ k = 0,1,2,\dots\}$ and $\mathcal{B'}:=\{y^k\, x^k\ :\ k = 0,1,2,\dots\}$. From these observations, we can see that, if $n\in \mathbb Z^+$, then $S_n$ and $S_{-n}$ are freely generated as $R_0$-modules by $x^n$ and $y^n$, respectively. 

Whenever the product of any two elements of $\mathcal{B}-\{1\}$ is expressed as an $R$-linear combination of elements of $\mathcal{B}$, the coefficient of $1$ is $0$. Thus we can define a ring homomorphism $S_0\to R$ by mapping every element $s\in S_0$ to the coefficient of $1$ in the expression of $s$  as a linear combination of elements of $\mathcal{B}$. Lemma 2.7 implies, then, that $S_0$ has UGN. Therefore, by Theorem B, $S$ has UGN.
\end{proof}

It is worth mentioning that there is no counterpart to the characterizations (iii) and (iv) in Theorem B pertaining to twisted group rings. We show this in Corollary~3.10 below by proving that any twisted group ring of a free-by-amenable group over a UGN-ring must have UGN. It is an open question whether the corresponding statement is true for the twisted group ring of an arbitrary group. 

\begin{corollary} Let $G$ be a free-by-amenable group, R a ring, and $R\ast G$ a twisted group ring. Then $R\ast G$ has UGN if and only if $R$ has UGN. 
\end{corollary}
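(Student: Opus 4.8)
The plan is to reduce the problem to Theorem~A by exploiting the free‑by‑amenable structure of $G$, the one extra ingredient being that a twisted group ring of a \emph{free} group over $R$ is nothing but the ordinary group ring. The ``only if'' direction requires nothing new: the inclusion $R\hookrightarrow R\ast G$ is a ring homomorphism, so Lemma~2.7 gives that $R$ has UGN whenever $R\ast G$ does.

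For the converse, assume $R$ has UGN and write $R\ast G=R\ast^{\sigma}_{\omega}G$ with $\sigma$ the trivial action and $\omega$ a normalized $2$-cocycle $G\times G\to Z(R)^{\ast}$. By definition of a free‑by‑amenable group there is a normal subgroup $N$ of $G$ with $N$ free and $Q:=G/N$ amenable. Restricting the crossed system, the subring $R\ast N:=\bigoplus_{n\in N}Rn$ of $R\ast G$ is the twisted group ring of $N$ attached to $\omega|_{N\times N}$. Since $N$ is free, every $N$-module has vanishing cohomology in degrees $\geq 2$ (free groups have cohomological dimension at most one), so $H^{2}(N,Z(R)^{\ast})=0$ with respect to the trivial $N$-action; hence the cohomology class of $\omega|_{N\times N}$ is zero, and by \cite[\S 1.5, Exercise 10]{NO} the twisted group ring $R\ast N$ is isomorphic to the group ring $RN$. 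Therefore, by Corollary~2.8, the ring $R\ast N$ has UGN.

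Next I would exhibit $R\ast G$ as a freely and fully $Q$-graded ring with base ring $R\ast N$. Fix a transversal $\{g_{q}:q\in Q\}$ of $N$ in $G$ with $g_{1}=1_{G}$, and put $(R\ast G)_{q}:=\bigoplus_{h\in g_{q}N}Rh$. Using that $N$ is normal, one checks that $R\ast G=\bigoplus_{q\in Q}(R\ast G)_{q}$ and $(R\ast G)_{q}(R\ast G)_{q'}\subseteq (R\ast G)_{qq'}$, so this is a $Q$-grading with base ring $(R\ast G)_{1}=R\ast N$. Moreover each $g_{q}$ is a unit of $R\ast G$, and because $\omega$ takes values in the central units of $R$, left multiplication by $g_{q}$ is an isomorphism of right $R\ast N$-modules $R\ast N\to (R\ast G)_{q}$, so each homogeneous component is a free right $R\ast N$-module of rank one (with basis $g_{q}$). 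Thus the grading is free and full, its base ring has UGN, and $Q$ is amenable, so Theorem~A((i)$\Longrightarrow$(ii)) yields that $R\ast G$ has UGN.

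The only point that is not essentially formal is the verification that this $Q$-grading on $R\ast G$ is free --- that each $(R\ast G)_{q}$ is free of rank one over $R\ast N$ --- and the facts doing the work there are that the coset representatives $g_{q}$ are invertible in $R\ast G$ and that the values of $\omega$ are central units, so that translation by $g_{q}$ is an $R\ast N$-module isomorphism. Everything else is bookkeeping with the crossed‑system multiplication rule together with the cited results (Lemma~2.7, Corollary~2.8, Theorem~A) and the vanishing of $H^{2}$ for free groups.
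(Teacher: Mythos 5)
Your proposal is correct and follows essentially the same route as the paper: restrict the cocycle to the free normal subgroup $N$, use $H^2(N,Z(R)^\ast)=0$ to identify $R\ast N$ with $RN$ (which has UGN by Corollary~2.8), and then view $R\ast G$ as a crossed product (equivalently, a freely and fully graded ring with units in each component) of the amenable quotient $Q$ over $R\ast N$, so that Theorem~A applies. The extra detail you supply --- checking that left multiplication by a coset representative gives a rank-one free $R\ast N$-module structure on each component --- is exactly the verification the paper leaves implicit in the phrase ``isomorphic to a crossed product of $Q$ over $R\ast F$.''
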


\begin{proof} The ``only if" part follows from Lemma 2.7. To prove the ``if" portion, suppose that $R$ has UGN. The group $G$ has a free normal subgroup $F$ such that $Q:=G/F$ is amenable. This means that the twisted group ring $R\ast G$ is isomorphic to a crossed product of the group $Q$ over a twisted group ring $R\ast F$.  Since $F$ is free, we have $H^2(F,Z(R)^\ast)=0$, which implies $R\ast F\cong RF$ as rings. Hence, by Corollary 2.8, $R\ast F$ possesses UGN. It follows, then, from Theorem~B that $R\ast G$ has UGN. 
\end{proof}

\begin{openquestion2} Let $G$ be an arbitrary group. If $R$ is a ring with UGN, must every twisted group ring $R\ast G$ have UGN?
\end{openquestion2}

\section{Examples of graded rings with finite generating numbers}

In this final section, we discuss obstacles to generalizing Theorem~B, Corollary~A, and Theorem~C in various ways. In \S 4.1, we describe three examples of BGN-rings graded by infinite groups whose base rings have UGN, illustrating that the boundedness and freeness conditions on the grading in Theorem~B((i)$\implies$(ii)) and Corollary~A are necessary. Moreover, in \S 4.2, we present three examples of rings graded by finite groups; the first two show that the equalities between infinite generating numbers that we established in \S 3 fail to translate into equalities between finite generating numbers. The purpose of the last example is to demonstrate that the projectivity condition in Theorem C cannot be jettisoned.   

\subsection{Gradings by infinite groups}

First we prove that the hypothesis that the grading is boundedly free in Theorem~B((i)$\implies$(ii)) and Corollary~A cannot be weakened to the grading being merely free. 

\begin{theorem} There is a ring $R$ with a full and unboundedly free $\mathbb Z$-grading such that $R_0$ has UGN and $R$ has BGN.
\end{theorem}

\begin{proof} 
Let $S$ be an arbitrary UGN-ring, and let $R$ be the $S$-ring generated by $x_1, x_2, x_3, y_1, y_2, y_3$ subject only to the following relations:

\begin{enumeratenum}
\item $s\, x_i=x_i\, s$ for all $i=1,2,3$ and $s\in S$;
\item $s\, y_i=y_i\, s$ for all $i=1,2,3$ and $s\in S$;
\item $x_i\, y_i + y_i\, x_i = 1$ for $i=1,2,3$;
\item $x_i\, y_j + y_i\, x_j = 0$ whenever $i\neq j$. 
\end{enumeratenum}

Relations (3) and (4) yield the matrix equation
\[\begin{pmatrix}
x_1 & y_1\\
x_2 & y_2\\
x_3 & y_3 \end{pmatrix} 
\begin{pmatrix}
y_1 & y_2& y_3\\
x_1 & x_2 & x_3\end{pmatrix}=\begin{pmatrix} 1 & 0 & 0\\
0 & 1 & 0\\
0 & 0 & 1\end{pmatrix}.\]
\noindent Thus, by Lemma 2.5 and Proposition 2.2((ii)$\implies$(i)), $R$ has BGN. 

Set up a $\mathbb Z$-grading on $R$ by assigning ${\rm deg}(x_i):=1$ and ${\rm deg}(y_i):=-1$. 
As in the proof of Corollary 3.9, we can apply \cite[Corollary 6.2]{diamond} to conclude that $R$ is freely generated as an $S$-module by each of the two sets $\mathcal A$ and $\mathcal A'$ defined below.  
\[\mathcal A: = \Big\{x_{i_1}\dots x_{i_k}\, y_{j_1}\dots y_{j_l}\ :\  k,l\geq 0, \ \ 1\leq i_1,\dots,i_k, j_1,\dots,j_l\leq 3\Big\};\]
\[\mathcal A': = \Big\{y_{j_1}\dots y_{j_l}\, x_{i_1}\dots x_{i_k}\ :\  k,l\geq 0, \ \ 1\leq i_1,\dots,i_k, j_1,\dots,j_l\leq 3\Big\}.\]
As a result, $R_0$ is freely generated as an $S$-module by each of the sets
\[\mathcal{B}: = \Big\{x_{i_1}\dots x_{i_k}\, y_{j_1}\dots y_{j_k}\ :\  k\geq 0, \ \ 1\leq i_1,\dots,i_k, j_1,\dots,j_k\leq 3\Big\};\]
\[\mathcal{B}': = \Big\{y_{j_1}\dots y_{j_k}\, x_{i_1}\dots x_{i_k}\ :\  k\geq 0, \ \ 1\leq i_1,\dots,i_k, j_1,\dots,j_k\leq 3\Big\}.\]

From the properties above, it is apparent that, for $n\in \mathbb Z^+$, $R_n$ is freely generated as an $R_0$-module by the set 
\[\Big\{x_{i_1}\dots x_{i_n}\ :\ 1\leq i_1,\dots,i_n\leq 3\Big\},\]
and that $R_{-n}$ is freely generated as an $R_0$-module by the set
\[\Big\{y_{j_1}\dots y_{j_n}\ :\ 1\leq j_1,\dots,j_n\leq 3\Big\}.\]
Hence the grading that we have defined on $R$ is full and unboundedly free. 

Define the function $\phi:R_0\to S$ by, for any $r\in R_0$, letting $\phi(r)$ be the coefficient of~$1$ in the expression of $r$ as an $S$-linear combination of the elements of $\mathcal{B}$.  Plainly, $\phi$ is additive, and $\phi(1_{R_0})=1_S$. Notice further that, whenever any product of two elements of $\mathcal{B}-\{1\}$ is written as a linear combination of elements of $\mathcal{B}$ with coefficients in $S$ by repeatedly applying relations (3) and (4), the coefficient of $1$ in that linear combination will be $0$. From this observation we can see that $\phi$ is multiplicative and therefore a ring homomorphism. Thus, according to Lemma~2.7, $R_0$ must have UGN.
\end{proof}

\begin{remark}{\rm The authors do not know whether there is an example witnessing Theorem~4.1 that has generating number one.}
\end{remark}

In Theorem 4.2, we demonstrate that the bounded freeness of the grading in Theorem~B((i)$\implies$(ii)) and Corollary~A cannot be replaced by strongness. Since strong gradings are necessarily projective, this result also serves to
show that Theorem~D fails to generalize to supramenable groups.

\begin{theorem} For any positive integer $n$, there exists a strongly $\mathbb Z$-graded ring $R$ such that $R_0$ has UGN and ${\rm gn}(R)=n$. 
\end{theorem}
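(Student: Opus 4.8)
The plan is to produce a concrete example, the natural candidate being the Leavitt algebra $R:=L_K(n,n+1)$ of module type $(n,1)$ over a field $K$: the universal $K$-algebra generated by the entries of an $n\times(n+1)$ matrix $X=(x_{ij})$ and an $(n+1)\times n$ matrix $Y=(y_{ji})$ subject to the relations $XY=I_n$ and $YX=I_{n+1}$, so that $R^n\cong R^{n+1}$ as $R$-modules. I would equip $R$ with the $\mathbb Z$-grading in which $\deg(x_{ij})=1$ and $\deg(y_{ji})=-1$; this is well defined because each entry of $XY-I_n$ and of $YX-I_{n+1}$ is homogeneous of degree $0$.

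The first step is to show this grading is strong. It suffices to verify $1\in R_1R_{-1}$ and $1\in R_{-1}R_1$, because a $\mathbb Z$-graded ring $S$ with this property is automatically strongly graded: writing $1=\sum_k a_kb_k$ with $a_k\in S_1$, $b_k\in S_{-1}$, one gets for $r\in S_m$ that $r=\sum_k a_k(b_kr)$ with $b_kr\in S_{-1}S_m\subseteq S_{m-1}$, so $S_m=S_1S_{m-1}$; dually $S_m=S_{-1}S_{m+1}$ for all $m$; iterating yields $S_m=(S_1)^m$ for $m>0$, $S_m=(S_{-1})^{-m}$ for $m<0$, and $S_0=S_1S_{-1}=S_{-1}S_1$, whence $S_iS_j=S_{i+j}$ for all $i,j$ (checking the cases on the signs of $i$ and $j$). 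For $R$ itself, the $(1,1)$-entries of the relations $XY=I_n$ and $YX=I_{n+1}$ give $1=\sum_k x_{1k}y_{k1}\in R_1R_{-1}$ and $1=\sum_i y_{1i}x_{i1}\in R_{-1}R_1$.

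Next I would check ${\rm gn}(R)=n$. Since $R^n\cong R^{n+1}$ there is an $R$-module epimorphism $R^n\to R^{n+1}$, so ${\rm gn}(R)\le n$ by Proposition~2.4. For the reverse inequality, suppose $1\le p<n$ and that there were an epimorphism $R^p\to R^q$ with $q>p$; by projectivity it splits, so $R^q$, and hence $R^{p+1}$, would be a direct summand of $R^p$, i.e.\ $(p+1)[R]\le p[R]$ in ${\rm Proj}(R)$. But by Leavitt's determination of the finitely generated projectives of these algebras (\cite{Leavitt}), ${\rm Proj}(R)\cong C(n,1)$ with $[R]$ corresponding to the generator $a$, and in $C(n,1)$ one has $(p+1)a\le pa$ if and only if $p\ge n$ (the element $na$ is absorbing, while $0,a,2a,\dots,(n-1)a$ are pairwise distinct and none is $\le$ a smaller multiple). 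This contradiction gives ${\rm gn}(R)=n$.

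Finally, and this is the crux, one must show $R_0$ has UGN. The task is to describe the degree-$0$ component well enough to exhibit it as a directed union of subrings each having UGN — for instance finite-dimensional (hence right Artinian, hence stably finite) $K$-algebras — so that $R_0$ has UGN by Lemma~2.10. I expect this to be the main obstacle. For $n=1$ it is classical: the relation $YX=I_2$ is a family of \emph{monomial} relations $y_ix_j=\delta_{ij}$, so every degree-$0$ element reduces to a linear combination of monomials $x_{\mu}y_{\nu}$ with $|\mu|=|\nu|$, and $R_0$ is the familiar locally matricial colimit $\varinjlim M_{2^{k}}(K)$. For $n\ge2$ the relations $XY=I_n$, $YX=I_{n+1}$ are no longer monomial, so obtaining a normal form for degree-$0$ elements and identifying an appropriate exhausting chain of finite-dimensional subalgebras of $R_0$ requires a genuine argument; this is exactly the kind of bookkeeping carried out in \cite{Abrams} and \cite{Loc}, whose methods I would adapt. (Should a clean description of $R_0$ prove elusive, an alternative is to replace $L_K(n,n+1)$ by a ring constructed directly so that its base component is visibly a directed union of UGN-rings while still carrying a strong $\mathbb Z$-grading with ${\rm gn}=n$, following the constructions of \cite{Abrams}.)
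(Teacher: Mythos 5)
Your verification that the grading on $L_K(n,n+1)$ is strong and your computation of ${\rm gn}\bigl(L_K(n,n+1)\bigr)=n$ (granting Bergman's identification of the monoid of finitely generated projectives with $C(n,1)$) are sound, but the proposal has a genuine gap exactly where you flag one: you never prove that the degree-zero component has UGN, and for the ring you chose this is not routine bookkeeping. The locally matricial description $R_0\cong\varinjlim M_{n^k}(K)$ is special to the Leavitt algebras of module type $(1,n-1)$, whose defining relations are monomial; for $L(m,n)$ with $m\geq 2$ no normal form exhibiting the zero component as a directed union of matrix algebras (or of any other recognizably UGN subrings) is available, and it is not at all clear that $\bigl(L_K(n,n+1)\bigr)_0$ has UGN. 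Since that assertion is the entire content of the theorem, the argument as it stands does not go through, and the particular ring you start from is probably the wrong one to attack it with. (Note also the paper's remark that a Leavitt path algebra over a field with BGN always has generating number one, which is a warning sign that ``naturally occurring'' strongly graded examples resist having ${\rm gn}=n>1$ together with a well-understood base ring.)

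The paper's proof decouples the two requirements instead. It takes $L$ to be the Leavitt algebra of type $(1,n-1)$ over a UGN-ring $S$ (generators $e_i,e_i^\ast$ with $e_i^\ast e_j=\delta_{ij}$ and $\sum_i e_ie_i^\ast=1$), whose canonical $\mathbb Z$-grading is strong, whose generating number is $1$ because $L^n\cong L$, and whose zero component is the union of the chain of subalgebras $L_0^l\cong M_{n^l}(S)$, hence has UGN by Morita invariance (Corollary 2.3) together with closure under direct limits (Lemma 2.10). It then sets $R:=L\times U$ with $R_k:=L_k\times U$, where $U$ is a ring with ${\rm gn}(U)=n$ supplied by Proposition 2.12 (itself an application of Bergman's Theorem 2.13 to $C(n,k)$); Lemma 2.11 then gives ${\rm gn}(R)=\sup\{1,n\}=n$ while $R_0=L_0\times U$ has ${\rm gn}(R_0)=\sup\{\aleph_0,n\}=\aleph_0$. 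Your parenthetical fallback gestures at exactly this kind of construction, but to close the argument you would need to carry it out; in particular you need a source of rings with prescribed finite generating number and the product formula for generating numbers, neither of which appears in your write-up.
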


The main step in establishing Theorem 4.2 is to prove the following proposition.

\begin{proposition} Let $S$ be a UGN-ring, and let $n$ be an integer with $n>1$. Furthermore, let $L$ be the $S$-ring generated by the sets $E:=\{e_1,\dots, e_n\}$ and $E^\ast:=\{e_1^\ast, \dots, e_n^\ast\}$ subject only to the following relations.
\begin{enumerate*}
\item The elements of $S$ commute with the generators in $E\cup E^\ast$. 
\item  $e^\ast_ie_j=\delta_{ij}$ for $i, j=1,\dots, n$.
\item $\sum_{i=1}^n e_ie^\ast_i=1.$
\end{enumerate*}

\noindent Then the following two statements hold.
\begin{itemize}
\item $L^n\cong L$ as $L$-modules, so that ${\rm gn}(L)=1$. 
\item There is a strong $\mathbb Z$-grading on $L$ such that $L_0$ has UGN.
\end{itemize}
\end{proposition}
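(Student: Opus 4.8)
The plan is to handle the two bullets separately. For the first, let $\mathbf{e}$ be the $1\times n$ matrix $(e_1,\dots,e_n)$ over $L$ and $\mathbf{e}^{*}$ the $n\times 1$ matrix $(e_1^{*},\dots,e_n^{*})^{t}$ over $L$; relations (iii) and (ii) say precisely that $\mathbf{e}\mathbf{e}^{*}=I_1$ and $\mathbf{e}^{*}\mathbf{e}=I_n$. Since multiplication of matrices over $L$ is the composition of the associated right $L$-module homomorphisms (a $p\times q$ matrix acting by left multiplication on column vectors), $\mathbf{e}$ and $\mathbf{e}^{*}$ define mutually inverse isomorphisms $L^{n}\cong L$ of $L$-modules. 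As $n>1$, composing with the projection $L^{n}\to L^{2}$ gives an $L$-module epimorphism $L\to L^{2}$, so $L$ has BGN with ${\rm gn}(L)=1$ by Proposition 2.4 (that $L\neq 0$ will emerge below). For the grading, assign degree $0$ to the elements of $S$, degree $1$ to each $e_i$, and degree $-1$ to each $e_i^{*}$; the three defining relations are then homogeneous of degree $0$, so this induces a $\mathbb{Z}$-grading $L=\bigoplus_{m\in\mathbb{Z}}L_m$, which is strong because $1=\sum_{i=1}^{n}e_ie_i^{*}\in L_1L_{-1}$, $1=e_1^{*}e_1\in L_{-1}L_1$, and a $\mathbb{Z}$-graded ring in which $1$ lies in both $R_1R_{-1}$ and $R_{-1}R_1$ is strongly graded.

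The main work is to show $L_0$ has UGN. Using relations (i) and (ii) --- move scalars to the left and collapse every $e_i^{*}e_j$ to $\delta_{ij}$ --- one sees that $L_0$ is $S$-spanned by the monomials $e_Ie_J^{*}:=e_{i_1}\cdots e_{i_k}\,e_{j_k}^{*}\cdots e_{j_1}^{*}$ with $k\geq 0$ and $I=(i_1,\dots,i_k),\ J=(j_1,\dots,j_k)\in\{1,\dots,n\}^{k}$. For a fixed $k$, relation (ii) gives $(e_Ie_J^{*})(e_Ke_L^{*})=\delta_{JK}\,e_Ie_L^{*}$ and $\sum_{|I|=k}e_Ie_I^{*}=1$, so these $n^{k}$-indexed monomials behave like matrix units over $S$: there is a surjective unital $S$-algebra homomorphism $M_{n^{k}}(S)\to L_0^{(k)}$ onto their $S$-span, sending $E_{IJ}$ to $e_Ie_J^{*}$. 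Relation (iii) yields $e_Ie_J^{*}=\sum_{m=1}^{n}e_{Im}e_{Jm}^{*}$, so $L_0^{(k)}\subseteq L_0^{(k+1)}$, the inclusion corresponding under these maps to the standard block-diagonal embedding $M_{n^{k}}(S)\hookrightarrow M_{n^{k+1}}(S)$; and every element of $L_0$, being a finite $S$-combination of such monomials, lies in $L_0^{(k)}$ for $k$ large. Hence $L_0=\bigcup_{k\geq 0}L_0^{(k)}=\varinjlim_k M_{n^{k}}(S)$.

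To conclude, $M_{n^{k}}(S)$ is Morita equivalent to $S$, which has UGN, so each $M_{n^{k}}(S)$ has UGN by Corollary 2.3, and therefore $L_0$ has UGN by Lemma 2.10; in particular $S\cong M_{n^{0}}(S)\hookrightarrow L_0$, so $L\neq 0$. The only step that is not purely formal is the injectivity of $M_{n^{k}}(S)\to L_0^{(k)}$, i.e.\ the $S$-linear independence of the level-$k$ monomials $e_Ie_J^{*}$; this is indispensable, since a proper quotient of $M_{n^{k}}(S)$ need not have UGN. I would obtain it from an explicit representation: let $V$ be the free right $S$-module on the set of right-infinite $\{1,\dots,n\}$-sequences, with $e_i$ acting by prepending $i$ and $e_i^{*}$ by deleting a leading $i$ (and annihilating sequences not beginning with $i$); relations (i)--(iii) are immediate, and evaluating $\sum_{I,J}(e_Ie_J^{*})\,s_{IJ}$ on $(j_1,\dots,j_k,1,1,1,\dots)$ forces every $s_{IJ}$ to vanish. (Alternatively, this is the known normal form for the Leavitt algebra $L(1,n)$; cf.\ \cite{Abrams}, \cite{Loc}.)
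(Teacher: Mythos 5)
Your proof is correct and follows essentially the same route as the paper: the same pair of rectangular matrices for $L^n\cong L$, the same degree assignment and strongness criterion for the grading, and the same identification of $L_0$ as the ascending union $\bigcup_k L_0^{(k)}$ with $L_0^{(k)}\cong M_{n^k}(S)$, finished off by Morita invariance (Corollary 2.3) and the direct-limit lemma (Lemma 2.10). The one genuine difference is that you explicitly justify the injectivity of $M_{n^k}(S)\to L_0^{(k)}$ by exhibiting a faithful representation of $L$ on the free right $S$-module over infinite sequences; the paper instead routes this through its Proposition 4.3, whose hypothesis ${\rm Ann}_S(\epsilon_{ij})=0$ it does not verify in the proof of Proposition 4.2 (implicitly deferring to standard Leavitt path algebra theory, cf.\ \cite{LPA}). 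You are right that this independence is the only non-formal point --- a proper quotient of $M_{n^k}(S)$ could fail to have UGN --- so your representation-theoretic argument makes the proof self-contained where the paper leaves a (well-known) fact to the reader.
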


The $S$-ring $L$ in Proposition 4.3 is often denoted $L_S(1,n)$ (see the remark following the proof of  Proposition 2.12). It is a special case of what is known as a {\it Leavitt path ring} (see \cite{LPA}). Moreover, the $\mathbb  Z$-grading referred to in Proposition 4.3 corresponds to the conventional $\mathbb Z$-grading on such rings (see \cite[\S 2.1]{LPA}). 

The proof of Proposition 4.3 makes use of the following well-known, elementary property. 

\begin{proposition} Let $S$ be a ring, and let $R$ be an $S$-ring that is generated by elements $\epsilon_{ij}$ for $i,j=1,\dots, n$ satisfying the following four conditions:
\begin{enumerate*}
\item $s\epsilon_{ij}=\epsilon_{ij}s$ for all $s\in S$ and $i,j=1,\dots, n$;
\item${\rm Ann}_S(\epsilon_{ij})=0$ for all $i, j=1,\dots, n$;
\item $\epsilon_{ij}\epsilon_{km}=\delta_{jk}\, \epsilon_{im}$ for all $i,j,k,m=1,\dots, n$;
\item $\displaystyle{\sum_{i=1}^n \epsilon_{ii}=1}$.
\end{enumerate*}
If $\{E_{ij}: i,j=1,\dots, n\}$ is the set of standard matrix units in $M_{n}(S)$, then
there is an $S$-ring isomorphism $\phi: M_{n}(S)\to R$ such that
$\phi(E_{ij})=\epsilon_{ij}$ for $i,j=1,\dots, n.$ 
\end{proposition}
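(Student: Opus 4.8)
The plan is to write down the evident candidate map, verify that it is an $S$-algebra homomorphism, and then check surjectivity and injectivity separately; only the injectivity step uses hypothesis (ii), and nothing here is genuinely hard. Since every element of $M_n(S)$ has a unique expression $\sum_{i,j=1}^n s_{ij}E_{ij}$ with $s_{ij}\in S$, I would define
\[
\phi\!\left(\sum_{i,j}s_{ij}E_{ij}\right):=\sum_{i,j}s_{ij}\,\epsilon_{ij},
\]
where $s_{ij}\epsilon_{ij}$ denotes the action of $S$ on the $S$-algebra $R$. Uniqueness of the matrix expansion makes this assignment well defined, additive, and compatible with the $S$-actions, and $\phi(1_{M_n(S)})=\phi\big(\sum_i E_{ii}\big)=\sum_i\epsilon_{ii}=1_R$ by hypothesis (iv).

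Next I would check that $\phi$ is multiplicative. Because $\phi$ is additive and respects the $S$-actions, it suffices to verify this on elements of the form $sE_{ij}$: using hypothesis (i) to commute scalars past the generators and the axioms in the definition of an $S$-algebra to reassociate the action, one computes $\phi(sE_{ij})\,\phi(tE_{km})=(s\epsilon_{ij})(t\epsilon_{km})=(st)\,(\epsilon_{ij}\epsilon_{km})$, and hypothesis (iii) turns this into $(st)\,\delta_{jk}\epsilon_{im}=\phi\big((sE_{ij})(tE_{km})\big)$. Thus $\phi$ is a unital $S$-algebra homomorphism.

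For surjectivity, note that ${\rm Im}\,\phi$ is an $S$-subalgebra of $R$ containing each generator $\epsilon_{ij}=\phi(E_{ij})$; since the $\epsilon_{ij}$ generate $R$ as an $S$-algebra, ${\rm Im}\,\phi=R$. For injectivity, suppose $\sum_{i,j}s_{ij}\epsilon_{ij}=0$. Fixing a pair $(i,j)$ and choosing arbitrary $k,l$, I would multiply this relation on the left by $\epsilon_{ki}$ and on the right by $\epsilon_{jl}$; after pulling the scalars (central on the generators, by (i)) out in front and applying hypothesis (iii) twice, every term collapses and one is left with $s_{ij}\epsilon_{kl}=0$. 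Since ${\rm Ann}_S(\epsilon_{kl})=0$ by hypothesis (ii), this forces $s_{ij}=0$; as $(i,j)$ was arbitrary, $\phi$ is injective, completing the proof.

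I do not anticipate a real obstacle here: this is the standard \emph{abstract matrix units} argument, and hypotheses (i)--(iv) of the proposition, together with the $S$-algebra axioms, are exactly calibrated so that each step goes through. The only place demanding any care is the bookkeeping with scalars in the multiplicativity and injectivity computations, where one must repeatedly invoke hypothesis (i) and the $S$-algebra axioms to move elements of $S$ through products of the $\epsilon_{ij}$; hypothesis (ii) is then precisely what upgrades the resulting relations $s_{ij}\epsilon_{kl}=0$ to $s_{ij}=0$.
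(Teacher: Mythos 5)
Your proof is correct and complete: it is the standard abstract matrix-units argument, and hypotheses (i)--(iv) are each invoked exactly where needed (in particular, (ii) only for injectivity, as you note). The paper itself states this proposition without proof, calling it a well-known elementary property, so there is no authorial argument to compare against; yours is precisely the proof the authors are implicitly relying on.
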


\begin{proof}[Proof of Proposition 4.3] Let $\mathcal{M}(E)$ be the multiplicative submonoid of $L$ generated by $E$. Then $\mathcal{M}(E)$ is free on $E$. For any $\alpha=e_{i_1}\cdots e_{i_l}$, we call $l$ the {\it length} of $\alpha$, denoted $l(\alpha)$, and write $\alpha^\ast:=e^\ast_{i_l}e^\ast_{i_{l-1}}\cdots e^\ast_{i_1}$. In addition, we define $l(1):=0$ and $1^\ast:=1$. Finally, we write $\mathcal{M}_l(E)$ for the subset of $\mathcal{M}(E)$ consisting of all the elements of length $l$.  With this notation, we can generalize the relations (ii) and (iii) among the hypotheses of Proposition 4.3 as follows: 
\begin{enumeratenum}
\item  $\alpha^\ast\beta=\delta(\alpha, \beta)$ for all $\alpha, \beta\in \mathcal{M}(E)$ with $l(\alpha)=l(\beta)$;
\item $\displaystyle{\sum_{\alpha \in \mathcal{M}_l(E)}\alpha \alpha^\ast=1}$ for all $l\geq 0$. 
\end{enumeratenum}

We set up the $\mathbb Z$-grading on $L$ by, for any $k\in \mathbb Z$, defining $L_k$ to be the $S$-linear span of the set of elements of the form $\alpha\beta^\ast$ such that $\alpha, \beta\in \mathcal{M}(E)$ and $l(\alpha)-l(\beta)=k$. Moreover, relations (1) and (2) ensure that this grading is strong (see \cite[Proposition~1.1.1(3)]{NO}). 

 That $L^n\cong L$ as $L$-modules follows from the defining relations of $L$. To establish that $L_0$ has UGN, we let $L_0^l$ be the subring of $L_0$ consisting of all $S$-linear combinations of elements of the form $\alpha\beta^\ast$ where $\alpha, \beta\in \mathcal{M}_l(E)$. Then $L_0$ is the union of the chain
\[ L_0^0\subset L^1_0\subset L^2_0\subset \cdots,\]
with the containments following easily from relation (iii). 
Our intent is to apply Proposition 4.4 to show that, for every $l\geq 0$, $L^l_0\cong M_{n^l}(S)$. This will mean, by either Corollary~2.3 or Corollary~3.5, that each subring $L^l_0$ has UGN, implying, by Lemma~2.10, that $L_0$ does too. 

To construct the desired isomorphism, we first choose a bijection $\sigma:\{1, \dots, n^l\}\to \mathcal{M}_l(E)$ and write $\epsilon_{ij}:= \sigma(i) (\sigma(j))^\ast$ for $i,j=1,\dots, n^l$.
Then (1) and (2), respectively, imply the following two equations:
\begin{itemize} \item $\displaystyle{\epsilon_{ij}\epsilon_{km}=\delta_{jk}\, \epsilon_{im}\ \mbox{for}\ \ i,j,k,m=1,\dots, n^l;}$
\item  $\displaystyle{\sum_{i=1}^{n^l} \epsilon_{ii}=1.}$ \end{itemize}
Now let $\{E_{ij}: i,j=1,\dots, n^l\}$ be the set of standard matrix units in $M_{n^l}(S)$. Referring to Proposition 4.4, we see that there is an $S$-ring isomorphism $\phi: M_{n^l}(S)\to L^l_0$ such that
$\phi(E_{ij})= \epsilon_{ij}$ for $i,j=1,\dots, n^l$. 
\end{proof}

\begin{remark} {\rm The interpretation of $L_0$ as a direct limit of matrix rings that plays a central role in our proof of Proposition 4.3 is originally due to G. Abrams and P. N. \'Anh \cite{AbramsAnh} for the case where $S$ is a field.}
\end{remark}

\begin{remark}{\rm The case of Proposition 4.3 when $S$ is a field also follows from an argument constructed by N.Q. Loc \cite[p. 67]{Loc}.}
\end{remark}

\begin{remark}{\rm In light of our interest in free gradings, we observe that, in Proposition~4.3, $L_k$ is a finitely generated free {\it right} $L_0$-module if $k>0$ and a finitely generated free {\it left} $L_0$-module if $k<0$.}
\end{remark}

In order to prove Theorem 4.2, we merely need to form a direct product involving the ring from Proposition~4.3, a technique, borrowed from \cite{Abrams}, that we will use on three further occasions in this section. 

\begin{proof}[Proof of Theorem 4.2] Let $S$ be a ring with UGN, and let $L$ be the $\mathbb Z$-graded ring defined in Proposition~4.3. By Proposition~2.12, there is a ring $U$ with ${\rm gn}(U)=n$. Put $R:=L\times U$ and grade
$R$ by $\mathbb Z$ by defining $R_k:=L_k\times U$ for every $k\in \mathbb Z$. Since the grading on $L$ is strong, the same is true for the grading on $R$. Moreover, Lemma~2.11 implies that $R_0$ has UGN and ${\rm gn}(R)=n$. 
\end{proof}

\begin{remark} {\rm We point out that it is not possible to find a Leavitt path algebra over a field that enjoys the properties of the ring $R$ in Theorem~4.2. The reason is that, if a Leavitt path algebra over a field has BGN, then it must have generating number one (see \cite[Remark 3.17]{AbramsNP}).}
\end{remark}

Theorem 4.2 allows us to establish the following characterization of local finiteness within the class of elementary amenable groups. 

\begin{corollary} For an elementary amenable group $G$, the following two statements are equivalent.
\begin{enumerate*}
\item $G$ is locally finite.
\item Every projectively $G$-graded ring has UGN if and only if its base ring has UGN.
\end{enumerate*}
\end{corollary}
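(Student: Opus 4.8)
The plan is to treat the two implications separately, with $(i)\Longrightarrow(ii)$ being immediate and $(ii)\Longrightarrow(i)$ requiring a construction. The implication $(i)\Longrightarrow(ii)$ is nothing but a restatement of Theorem C: if $G$ is locally finite, then Theorem C asserts precisely that every projectively $G$-graded ring $R$ has UGN if and only if $R_1$ has UGN.

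For $(ii)\Longrightarrow(i)$ I would argue the contrapositive. Assume $G$ is elementary amenable but not locally finite; the goal is to produce a projectively $G$-graded ring $R$ whose base ring $R_1$ has UGN while $R$ has BGN. The first step is to locate an infinite cyclic subgroup of $G$. Since $G$ is not locally finite, it contains a finitely generated infinite subgroup $H$, which is again elementary amenable. The key group-theoretic input is that a finitely generated torsion elementary amenable group is finite; I would either cite this or supply the short transfinite induction over the hierarchy defining the class of elementary amenable groups, using at the extension step that a finite-index subgroup of a finitely generated group is finitely generated, and at the direct-limit step that any finitely generated subgroup of a direct limit is a homomorphic image of a finitely generated subgroup of one of the terms. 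Since $H$ is infinite, it therefore contains an element of infinite order, and so $G$ has a subgroup $H_0$ with $H_0\cong\mathbb Z$.

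Next I would invoke Theorem 4.1 with $n=2$: there is a strongly $\mathbb Z$-graded ring $R$ with $R_0$ having UGN and ${\rm gn}(R)=2$, so that $R$ has BGN. Identifying $\mathbb Z$ with $H_0$ and declaring $R_g:=0$ for every $g\in G\setminus H_0$, I regard $R$ as a $G$-graded ring; this is a genuine $G$-grading, as $R=\bigoplus_{g\in G}R_g$ and the only new products $R_gR_h$ that arise have a zero factor. Because the original $\mathbb Z$-grading is strong, each homogeneous component $R_k$ $(k\in\mathbb Z)$ is a finitely generated projective right $R_0$-module by \cite[Theorem 3.1.1]{NO}, and the components indexed by $G\setminus H_0$ are zero, hence trivially finitely generated projective; thus the $G$-grading on $R$ is projective. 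Its base ring is $R_{1_G}=R_0$, which has UGN, whereas $R$ itself has BGN, so statement (ii) fails, completing the proof of the contrapositive.

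I expect the one non-formal ingredient — the fact that a finitely generated torsion elementary amenable group must be finite, equivalently that a non-locally-finite elementary amenable group contains a copy of $\mathbb Z$ — to be the only real obstacle, and it is exactly the point at which elementary amenability, rather than mere amenability, is essential: Grigorchuk's group is a finitely generated infinite amenable torsion group, so no copy of $\mathbb Z$ need exist when $G$ is only assumed amenable.
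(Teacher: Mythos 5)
Your proof is correct and takes essentially the same route as the paper: (i)$\Longrightarrow$(ii) is Theorem C, and for the contrapositive of (ii)$\Longrightarrow$(i) you locate a copy of $\mathbb Z$ in $G$ and extend the strongly $\mathbb Z$-graded ring of Theorem 4.1 by zero components to obtain a projectively $G$-graded counterexample. The only difference is that the paper simply cites Chou's Theorem 2.3 for the existence of an infinite cyclic subgroup of a non-locally-finite elementary amenable group, whereas you sketch the underlying transfinite induction; otherwise the arguments coincide.
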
 

\begin{proof}The implication (i)$\Longrightarrow$(ii) is Theorem C. To show (ii)$\Longrightarrow$(i), we establish the contrapositive. Hence we suppose that $G$ is not locally finite. It follows, then, from \cite[Theorem 2.3]{Chou} that $G$ contains an infinite cyclic subgroup $H$. According to Theorem~4.2, there is a projectively $H$-graded ring $R$ such that $R_1$ has UGN but $R$ has BGN.  We now extend the $H$-grading on $R$ to a $G$-grading by setting $R_g:=0$ if $g \in G-H$. 
Equipped with this $G$-grading, the ring $R$ witnesses the failure of (ii) to hold. 
\end{proof}

Below we show that the boundedly-free condition in Theorem~B((i)$\implies$(ii)) cannot be replaced by the hypothesis that the grading is merely boundedly projective. 

\begin{theorem} Let $G$ be an elementary amenable group that is not locally virtually nilpotent. For every $n\in \mathbb Z^+$, there exists a ring $R$ graded by $G$ with the following properties.
\begin{enumerate*}
\item The $G$-grading on $R$ is full. 
\item For each $g\in G$, the $R_1$-module $R_g$ is a direct summand in $R_1$. 
\item $R_1$ has UGN.
\item ${\rm gn}(R)=n$.
\end{enumerate*}
\end{theorem}

The main step in establishing Theorem 4.6 is to prove Proposition 4.7.

\begin{proposition}  Let $G$ be any elementary amenable group that is not locally virtually nilpotent. For every $n\in \mathbb Z^+$, there exists a ring $R$ graded by $G$ with the following properties.
\begin{enumerate*}
\item For each $g\in G$, the $R_1$-module $R_g$ is a direct summand in $R_1$. 
\item The base ring $R_1$ has UGN.
\item ${\rm gn}(R)=1$.
\end{enumerate*}
\end{proposition}

\begin{proof} Let $S$ be any ring with UGN. By \cite[Theorem 3.2$'$]{Chou}, $G$ contains a subsemigroup $X$ that is free on a set of cardinality two. Set $R:=T_G(X,S)$. Since $X$ is not amenable with respect to $G$, Proposition 3.4 implies ${\rm gn}(R)=1$. For every $g\in G$, let $R_g\subseteq R$ be the set of all $X\times X$ matrices $A$ with entries in $S$ such that $A(x,y)=0$ if $x\neq gy$. This provides us with a $G$-grading on $R$. Observe further that $R_1$ is the collection of all diagonal matrices in $T_G(X,S)$. Hence, according to Lemma 2.11, $R_1$ has UGN. 

To establish property (i), let $g\in G$ and let $M$ be the $R_1$-module consisting of all $gX\times X$ matrices $A$ with entries in $S$ such that $A(x,y)=0$ if $x\neq gy$. Notice that $M$ is a free $R_1$-module whose generator is the matrix $A_0\in M$ such that $A_0(gy,y)=1$ for all $y\in X$.  Next define $P$ to be the submodule of $M$ consisting of all the matrices with $0$ in the $(gy,y)$ position whenever $gy\notin X$. Furthermore, define $Q$ to be the submodule of $M$ consisting of all the matrices with $0$ in the $(gy,y)$ position whenever $gy\in X$. Then $M\cong P\oplus Q$ and $P\cong R_g$, which proves (i).
\end{proof}

Now we apply the same trick as in the proof of Theorem~4.2 to prove Theorem~4.6. 

\begin{proof} [Proof of Theorem 4.6] According to Proposition 4.7, there is a ring $S$ graded by $G$ with the following properties.
\begin{itemize}
\item For each $g\in G$, the $S_1$-module $S_g$ is a direct summand in $S_1$. 
\item $S_1$ has UGN.
\item ${\rm gn}(S)=1$.
\end{itemize}

Appealing to Proposition 2.12, we let $T$ be a nonzero ring with ${\rm gn}(T)=n$.  Put $R:=S\times T$ and define a $G$-grading on $R$ by letting $R_g:=S_g\times T$ for every $g\in G$. This grading is plainly full. Also, by Lemma 2.11, $R_1=S_1\times T$ has UGN and ${\rm gn}(R)=n$.  To show (ii), let $g\in G$ and let $A$ be an $S_1$-module such that $S_g\oplus A\cong S_1$. Now make $A$ into an $R_1$-module by defining $a\, (s,t):=as$ for all $a\in A$, $s\in S_1$, and $t\in T$. It is then straightforward to verify that $R_g\oplus A\cong R_1$ as $R_1$-modules, thus completing the proof.
\end{proof}

\begin{remark}{\rm The rings described in the proofs of Proposition~4.7 and Theorem~4.6 are not strongly graded.}
\end{remark}

\subsection{Gradings by finite groups}

Next we show that Theorem B, Corollary A, and Theorem C fail to extend to finite generating numbers. 

\begin{proposition} Let $m, n$ be positive integers with $m\leq n$, and let $G$ be a group that has order $n$.
Then there are a ring $R$ and a skew group ring $R\ast G$ such that ${\rm gn}(R)=n$ and ${\rm gn}(R\ast G)=m$. 
\end{proposition}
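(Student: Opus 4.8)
The plan is to realize both $R$ and $R\ast G$ as rings obtained via Bergman's Theorem (Theorem 2.13) applied to suitable finitely generated abelian monoids, and then to arrange the $G$-action so that the crossed product computation can be carried out at the level of monoids. The natural candidate for $R$ is a ring with ${\rm Proj}(R)\cong C(n,k)$ for some $k$, so that ${\rm gn}(R)=n$ as in the proof of Proposition 2.12. For the skew group ring, the key point is Lemma 2.9: since $|G|=n$ and $G$ is finite, $R\ast G$ contains $M_n(R_1')$-type structure — more precisely, for a finite group $G$ of order $n$ one has that $R\ast G$ is closely related to $M_n$ of the base ring when the action is chosen well, or one can exploit that $R\ast G$ has $R$ as a subring of index related to $n$. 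The cleanest route is: take $R$ itself to be of the form $M_n(T) \times (\text{something})$ or, better, directly build $R$ so that $R\ast G \cong M_{?}(R)$-flavored, then apply Lemma 2.9 which says ${\rm gn}(M_d(A)) = {\rm gn}(A)/d$ when $d \mid {\rm gn}(A)$.

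Here is the concrete strategy I would pursue. First, fix $k := n$ (or any convenient positive integer) and let $B$ be a ring with a monoid isomorphism ${\rm Proj}(B)\cong C(n^2, n^2 k)$ with the generator $a\mapsto [B]$, so ${\rm gn}(B)=n^2$ by the argument of Proposition 2.12. Set $R := M_n(B)$. By Lemma 2.9 (with $m=n$, which divides $n^2$), ${\rm gn}(R) = {\rm gn}(M_n(B)) = n^2/n = n$, giving the first required equality. Now form the skew group ring: I would let $G$ act on $R = M_n(B)$ by \emph{permutation of a chosen basis}, i.e., via conjugation by permutation matrices coming from the regular representation $G\hookrightarrow S_n$ (using $|G|=n$). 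The main obstacle — and the technical heart of the argument — is to show that with this action, $R\ast G \cong M_n(B\ast G')$ or, more usefully, that $R\ast G$ is Morita equivalent to, or isomorphic to a matrix ring over, a ring whose generating number we can compute. In fact the classical fact to invoke is: if $G$ acts on $M_n(B)$ by permuting matrix units according to the left regular action of $G$ on itself, then $M_n(B)\ast G \cong M_n(B) \otimes \text{(something)}$; more precisely, $B G \ast G$-type arguments show $\left(\prod_G B\right)\ast G \cong M_n(B)$ when the action is the shift — compare Proposition 2.14 in the finite case. So I would instead start from $S := \prod_{g\in G} B = B^n$ with $G$ acting by the regular shift, so that $S\ast G \cong M_n(B)$, and then run the logic in reverse: take $R := S = B^n$ with this shift action — but then ${\rm gn}(R) = {\rm gn}(B^n) = \sup\{{\rm gn}(B)\} = {\rm gn}(B)$ by Lemma 2.11, and ${\rm gn}(R\ast G) = {\rm gn}(M_n(B)) = {\rm gn}(B)/n$.

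So the actual construction is: choose $B$ with ${\rm gn}(B) = n$ when $m = n/\gcd$... — wait, one needs ${\rm gn}(R)=n$ and ${\rm gn}(R\ast G)=m$ simultaneously, with only $m\le n$ assumed, not $m\mid n$. Thus I would take $R := B^n$ where $B$ has ${\rm gn}(B) = n$ (so ${\rm gn}(R) = n$ by Lemma 2.11), BUT modify the action so the crossed product is not a full matrix ring. The fix: let $R := B^n \times C$ where $C$ is an auxiliary ring with ${\rm gn}(C) = n$ chosen so that $G$ acts trivially on $C$, hence $C\ast G = CG$ with ${\rm gn}(CG) = {\rm gn}(C) = n$ by Corollary 2.8, while ${\rm gn}((B^n\ast G)) = {\rm gn}(M_n(B))$; choosing $B$ with ${\rm gn}(B) = mn$ (divisible by $n$) gives ${\rm gn}(M_n(B)) = m$ via Lemma 2.9, and ${\rm gn}(B^n) = mn \ge n$. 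Hmm, that forces ${\rm gn}(R) = \max(mn, n) = mn \ne n$. The genuine resolution — and the step I expect to be delicate — is to use $R := B$ (not a product) where $B$ is built directly so that both $B$ and $B\ast G$ are Bergman rings for explicitly described monoids: I would use the refinement-monoid techniques of \cite{Abrams} and \cite{Loc} cited in the introduction, constructing a single monoid $M$ with distinguished element $I$ such that (a) $n$ is minimal with $(n+1)I \le nI$, realizing ${\rm gn}(R) = n$, and (b) the $G$-graded/$G$-equivariant version of $M$ yields ${\rm gn}(R\ast G) = m$. Concretely, take $R$ with ${\rm Proj}(R)\cong C(n, k)$, and show that the skew group ring $R\ast G$ for the chosen action has ${\rm Proj}(R\ast G)$ containing $[R\ast G]$ with $m$ minimal such that $(m+1)[R\ast G]\le m[R\ast G]$; since $R\ast G$ is free of rank $n$ over $R$, one has $[R\ast G] = n\cdot[R]_{\text{restricted}}$-type relations, and tuning $k$ together with the action lets $m$ range over all values $\le n$. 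The hardest part will be verifying this last monoid computation — establishing that the relation $(m+1)[R\ast G] = m[R\ast G]$ holds in ${\rm Proj}(R\ast G)$ and that no smaller relation does — for which I would follow the explicit skew-group-ring constructions in \cite[Abrams]{Abrams} and \cite[Loc]{Loc}, adapting their type-of-non-IBN arguments to generating numbers exactly as the introduction promises.
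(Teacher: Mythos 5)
There is a genuine gap: none of the concrete constructions you propose actually works, and your final ``genuine resolution'' is only a program (build a single Bergman ring whose skew group ring has a prescribed monoid relation), with the decisive monoid computation left entirely unverified. Your second attempt ($R:=\prod_G B$ with the shift action, so $R\ast G\cong M_n(B)$ via Proposition 2.14) needs ${\rm gn}(B)=n$ to get ${\rm gn}(R)=n$ but ${\rm gn}(B)=mn$ to get ${\rm gn}(R\ast G)=m$, and your third attempt ($R:=B^n\times C$) fails as you yourself compute: with ${\rm gn}(B)=mn$ and ${\rm gn}(C)=n$ you get ${\rm gn}(R)=\max(mn,n)=mn$, and moreover ${\rm gn}(R\ast G)=\max(m,n)=n$, so \emph{both} required equalities fail. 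You then abandon the product idea instead of fixing it.

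The fix is a small but essential reassignment of roles between the two direct factors, and it is exactly what the paper does. Let $T$ have ${\rm gn}(T)=n$ and $S$ have ${\rm gn}(S)=m$ (Proposition 2.12). Put the shift action on $\prod_G T$, so that $\bigl(\prod_G T\bigr)\ast G\cong T(G,T)\cong M_n(T)$, which has generating number $n/n=1$ by Lemma 2.9; and let $G$ act trivially on $S$, so that factor contributes the group ring $SG$ with ${\rm gn}(SG)=m$ by Corollary 2.8. Taking $R:=\bigl(\prod_G T\bigr)\times S$, Lemma 2.11 gives ${\rm gn}(R)=\max\{n,m\}=n$ (here $m\leq n$ is used) and ${\rm gn}(R\ast G)=\max\{1,m\}=m$. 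In other words: the factor that collapses to generating number $1$ under the crossed product should be the one carrying $n$ beforehand, and the factor that is preserved should carry $m$; you had it the other way around, which is why the maxima came out as $mn$ and $n$ rather than $n$ and $m$. With this swap no refinement-monoid machinery, and no case analysis on divisibility of $m$ and $n$, is needed.
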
 

\begin{proof} By Proposition 2.12, there exist rings $S$ and $T$ with generating numbers $m$ and $n$, respectively.  Put $U:=M_n(T)$. According to Lemma 2.9, we have ${\rm gn}(U)=1$. Let $G$ be a finite group of order $n$ and notice that $T(G,T)\cong U$ as rings. Proposition~2.14 implies, then, that $U$ is isomorphic to a skew group ring of $G$ over $\prod_G T$. Write $V:=U\times S$  and $R:=\left (\prod_G T\right )\times S$. Define a $G$-grading on $V$ by setting $V_g:=U_g\times S$ for all $g\in G$. Then this grading makes $V$ into a skew group ring of $G$ over $R$.  Furthermore, applying Lemma 2.11, we obtain ${\rm gn}(R)=n$ and ${\rm gn}(V)=m$.
\end{proof}

Below we use Abrams's reasoning in \cite[Theorem~B]{Abrams} to construct another family of rings that demonstrates that Theorem C does not extend to finite generating numbers. In contrast to the one described in Proposition~4.8, the grading groups for this family comprise all nontrivial finite groups.  

\begin{proposition} Let $G$ be a nontrivial finite group and $m, n$ positive integers with $m\leq n$. Then there exists a ring $R$ strongly graded by $G$ such that
${\rm gn}(R_1)=n$ and ${\rm gn}(R)=m$. 
\end{proposition}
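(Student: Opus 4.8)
The plan is to reduce, by a direct-product argument, to the construction of a single strongly $G$-graded ring whose total ring has generating number $1$ but whose base ring has generating number $n$, and then to follow the method of \cite[Theorem B]{Abrams} to produce such a ring. Set $d:=|G|\geq 2$. Recall first that for the \emph{trivial} action of $G$ the skew group ring is just the group ring, so by Corollary~2.8 the ring $S_0G$ is strongly $G$-graded with $(S_0G)_1=S_0$ and ${\rm gn}(S_0G)={\rm gn}(S_0)$. Hence if $W$ is strongly $G$-graded with ${\rm gn}(W_1)=n$ and ${\rm gn}(W)=1$, and if ${\rm gn}(S_0)=m$ (such an $S_0$ exists by Proposition~2.12), then $R:=W\times S_0G$, graded by letting $R_g$ be $W_g$ times the degree-$g$ component of $S_0G$, is strongly $G$-graded, since for each $g$ one has $R_gR_{g^{-1}}=W_gW_{g^{-1}}\times S_0=W_1\times S_0=R_1$; and by Lemma~2.11, ${\rm gn}(R_1)={\rm gn}(W_1\times S_0)=\max(n,m)=n$ while ${\rm gn}(R)={\rm gn}(W\times S_0G)=\max(1,m)=m$. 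Moreover, if some $W_g$ contains no unit of $W$, then the corresponding $R_g$ contains no unit of $R$, so $R$ is not a crossed product; this is the feature that distinguishes the present situation from Proposition~4.5, where $|G|=n$ permitted the use of the \emph{skew group ring} $M_n(T)\cong T(G,T)$ to collapse the generating number.

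It remains to construct a strongly $G$-graded ring $W$, for an arbitrary nontrivial finite group $G$ and an arbitrary positive integer $n$, such that ${\rm gn}(W_1)=n$ and ${\rm gn}(W)=1$; here I would imitate \cite[Theorem B]{Abrams}. Using Bergman's realization theorem (Theorem~2.13) applied to a carefully chosen finitely generated abelian monoid $M$ — one presented by a generator $a$ (destined to be the class of $W_1$) together with finitely many further generators, subject to relations making $n$ the least integer with $(n+1)a\leq na$ while also providing an element $c\geq a$ with $2c\leq c$ — one obtains a ring $W_1$ with a monoid isomorphism ${\rm Proj}(W_1)\cong M$ sending $a$ to $[W_1]$; the first batch of relations guarantees ${\rm gn}(W_1)=n$. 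Alongside $W_1$ one builds, by generators and relations (this is where the bulk of the work lies, and where Proposition~4.3 on matrix units is used), a compatible family of invertible $W_1$-bimodules $\{W_g\}_{g\in G}$ with $W_g\otimes_{W_1}W_h\cong W_{gh}$ and $\sum_{g\in G}[W_g]=c$ in ${\rm Proj}(W_1)$, and forms the $G$-graded ring $W:=\bigoplus_{g\in G}W_g$. Since each $W_g$ is invertible, the grading is automatically strong (equivalently $1\in W_gW_{g^{-1}}$ for all $g$; cf. \cite[Proposition~1.1.1(3)]{NO}). The identity $\sum_g[W_g]=c$ together with $2c\leq c$ forces $W^2\cong W$ as $W$-modules: the class $[\operatorname{End}_{W_1}(W)]$ corresponds to $c$ under the Morita equivalence $W_1\sim\operatorname{End}_{W_1}(W)$ (Corollary~2.3), so $\operatorname{End}_{W_1}(W)$, and hence the subring $W$, has generating number $1$. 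Choosing $M$ so that not every $[W_g]$ equals $[W_1]$ makes the bimodules $W_g$ non-free, whence $W$ is not a crossed product.

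The main obstacle is exactly this last construction: one must \emph{simultaneously} realize the base ring $W_1$ with the prescribed projective-module monoid (so that ${\rm gn}(W_1)=n$) and equip it with the interlocking system of non-free invertible bimodules $W_g$ that collapses the total ring to generating number $1$, while keeping the $G$-grading strong — and, of course, one must check that the monoid $M$ fed to Bergman's theorem is conical and satisfies the boundedness hypothesis in Theorem~2.13. This is precisely the content of Abrams's argument in \cite[Theorem B]{Abrams}, transported from module types to generating numbers; by contrast, the reduction in the first paragraph and the bookkeeping with Lemma~2.11 and Corollaries~2.3 and~2.8 are routine.
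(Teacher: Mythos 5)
Your first paragraph (the direct-product reduction via $W\times S_0G$ and Lemma~2.11) is fine, and in fact slightly cleaner than the paper's own bookkeeping. The problem is the second paragraph: the entire weight of the proposition rests on producing a strongly $G$-graded $W$ with ${\rm gn}(W_1)=n$ and ${\rm gn}(W)=1$, and you do not actually construct it --- you defer it to ``transport Abrams's Theorem B argument,'' which you yourself identify as the main obstacle. Worse, the one concrete inference you do offer for ${\rm gn}(W)=1$ is invalid: from $2c\le c$ you correctly get ${\rm gn}\bigl({\rm End}_{W_1}(W)\bigr)=1$ via Morita equivalence, but you then conclude that ``the subring $W$'' also has generating number $1$. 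Generating number does not descend to subrings in that direction: Lemma~2.7 applied to the inclusion $W\hookrightarrow {\rm End}_{W_1}(W)$ gives ${\rm gn}(W)\ge {\rm gn}\bigl({\rm End}_{W_1}(W)\bigr)$, which is vacuous, and indeed $W_1$ itself is a subring of ${\rm End}_{W_1}(W)$ with generating number $n$. A generalized crossed product $\bigoplus_g W_g$ built from invertible bimodules is in general a \emph{proper} subring of ${\rm End}_{W_1}(W)$ (compare $\mathbb{Q}[\mathbb{Z}/2]\cong\mathbb{Q}\times\mathbb{Q}$ versus $M_2(\mathbb{Q})$), so this step genuinely fails as written.

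The paper avoids all of this by making the total ring \emph{equal} to an endomorphism ring rather than a subring of one, and no appeal to Bergman's theorem is needed. Take $S$ with ${\rm gn}(S)=n$, set $k:=|G|$, choose $l$ with $nl>k-1$ and $p:=nl-k+1$, and let $A:=\bigoplus_{g\in G}A_g$ with $A_1:=S^p$ and $A_g:=S$ for $g\neq 1$. Then $T:={\rm End}_S(A)\cong M_{nl}(S)$ carries the strong $G$-grading by ``generalized matrix'' diagonals, ${\rm gn}(T)=1$ because $M_n(S)$ embeds in $M_{nl}(S)$ and ${\rm gn}(M_n(S))=1$ by Lemma~2.9, and $T_1\cong M_p(S)\times S^{k-1}$ has generating number $n$ by Lemmas~2.7 and~2.11. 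If you want to salvage your outline, you would need to replace the subring argument by an identification of $W$ itself with a ring whose generating number you can compute directly --- which is exactly what the matrix-ring construction accomplishes.
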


\begin{proof}Invoking Proposition 2.12, we let $S$ be a ring with ${\rm gn}(S)=n$. Then ${\rm gn}(M_n(S))~=~1$ by Lemma~2.9. Put $k:=|G|$ and let $l\in \mathbb Z^+$ such that $nl>k-1$. Set $p:=nl-k+1$, and define a family $\{A_g\, :\, g\in G\}$ of $S$-modules as follows: $A_1:=S^p$; $A_g:=S$ for $g\neq 1$. Write $A:=\bigoplus_{g\in G}A_g$ and $T:={\rm End}_S(A)$. This means $T\cong M_{nl}(S)$, implying that there is a ring embedding $M_n(S)\to T$. Therefore, by Lemma~2.7, we have ${\rm gn}(T)=1$. 

The ring $T$ may be viewed as the ring of all $G\times G$ matrices $P$ such that $P(g,h)\in {\rm Hom}_S(A_h,A_g)$ for every pair $g, h\in G$. This allows us to equip $T$ with a $G$-grading by, for each $g\in G$, defining $T_g$ to be the set of all matrices $P\in T$ such that
$P(x,y)=0$ if $y\neq g^{-1}x$. Moreover, it is straightforward to check that this grading is strong  (see \cite[Proposition~1.1.1(3)]{NO}). 
 
We observe that the following isomorphic relations between rings hold:
\[T_1\cong {\rm End}_S(S^p)\times \underbrace{{\rm End}_S(S)\times \cdots \times {\rm End}_S(S)}_{k-1}\cong M_p(S)\times \underbrace{S\times \cdots \times S}_{k-1}.\]
 Since there is a ring embedding $S\to M_p(S)$, it follows from Lemma 2.7 that ${\rm gn}(M_p(S))\leq~n$. Hence we obtain ${\rm gn}(T_1)=n$ by applying Lemma~2.11. 
Now let $U$ be a ring with ${\rm gn}(U)=m$. Put $R:=T\times U$ and endow $R$ with a strong $G$-grading by defining $R_g:=T_g\times U$ for $g\in G$. Then Lemma~2.11 implies 
${\rm gn}(R_1)=n$ and ${\rm gn}(R)=m$. 
\end{proof}

\begin{remark}{\rm The ring $R$ described in the proof of Proposition 4.9 will not be a crossed product of $G$ over $R_1$ if $p>1$; indeed, $R$ will not even be freely graded. Moreover, it is not hard to see that it will not always be possible to find a ring $R$ satisfying the conclusion of Proposition~4.9 that is a crossed product of $G$ over $R_1$. 
To illustrate this, choose $m, n, k$ to be positive integers such that $k$ divides $n$ and $m<k$. Also, take $G$ to be a group of order $n/k$ and $R_1$ to be a ring with generating number $n$. Now let $R$ be a crossed product of $G$ over $R_1$. Then $R$ can be embedded in ${\rm End}_{R_1}(R)\cong M_{n/k}(R_1)$. Therefore, by Lemmas~2.9 and 2.7, we have ${\rm gn}(R)\geq k>m$. 
}
\end{remark}

We conclude the paper by applying the methods employed for \cite[Theorem~ A]{Abrams} and \cite[Theorem~3.1]{Loc} to prove Theorem~4.10, thus demonstrating that the hypothesis that the ring is projectively graded in Theorem~C cannot be omitted. 

\begin{theorem} Let $G$ be a nontrivial finite group. Then, for any integer $n>0$, there exists a $G$-graded ring $R$ such that
$R_1$ has UGN and ${\rm gn}(R)=n$.
\end{theorem}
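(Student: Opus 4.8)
The plan is to reduce to the case $n=1$ and then construct the required ring of generating number $1$ as a suitably labeled Leavitt algebra, computing its identity component by the methods of \cite{Abrams} and \cite{Loc}. For the reduction, it suffices to produce, for every nontrivial finite group $G$, a \emph{fully} $G$-graded ring $L$ such that $L_1$ has UGN and $\mathrm{gn}(L)=1$. Indeed, given such an $L$, choose by Proposition~2.12 a ring $U$ with $\mathrm{gn}(U)=n$, put $R:=L\times U$, and grade $R$ by declaring $R_1:=L_1\times U$ and $R_g:=L_g\times\{0\}$ for $g\in G\setminus\{1\}$. A routine verification shows that $\bigoplus_{g\in G}R_g=R$ and $R_gR_h\subseteq R_{gh}$, so this is a $G$-grading, and it is full because $R_g\cong L_g\neq 0$ for $g\neq 1$ while $R_1\neq 0$. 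By Lemma~2.11, $\mathrm{gn}(R_1)=\sup\{\mathrm{gn}(L_1),\mathrm{gn}(U)\}=\sup\{\aleph_0,n\}=\aleph_0$, so $R_1$ has UGN, and $\mathrm{gn}(R)=\mathrm{gn}(L\times U)=\sup\{1,n\}=n$, again by Lemma~2.11.

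To build $L$, I would fix a UGN ring $S$ (say $S=\mathbb Z$) and a finite generating sequence $g_1,\dots,g_\ell$ of $G$ with $\ell\geq 2$ (repeating a generator if $G$ is cyclic). Let $L$ be the $S$-algebra of Proposition~4.2 with $n:=\ell$, i.e. the Leavitt algebra $L_S(1,\ell)$ (cf. \cite{LPA}), and grade it by $G$ via $\deg(e_i):=g_i$ and $\deg(e_i^{\ast}):=g_i^{-1}$. Then $\mathrm{gn}(L)=1$: the defining relations of $L$ give an $L$-module epimorphism $L\to L^{\ell}$, hence $L\to L^2$, exactly as in Proposition~4.2. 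The grading is full: since $G$ is finite, each $g_i$ has finite order and hence every element of $G$ is a product of the $g_i$, so the support of $L$, which contains every $\deg(\mu)\deg(\nu)^{-1}$ with $\mu,\nu$ finite paths in $e_1,\dots,e_\ell$, is all of $G$. Observe that such a grading can never be projective: by Theorem~C a projectively $G$-graded ring over a UGN base ring must itself have UGN, whereas $\mathrm{gn}(L)=1$. Thus the labels $g_1,\dots,g_\ell$ must be chosen so that the grading fails to be projective — in particular it must not merely be a coarsening of the strong $\mathbb Z$-grading of Proposition~4.2 — and the construction is arranged accordingly.

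The crux — and the step I expect to be the main obstacle — is to prove that the identity component $L_1$ has UGN. The idea, adapting \cite[Theorem~A]{Abrams} and \cite[Theorem~3.1]{Loc}, is to realize $L_1$ as a directed union $L_1=\varinjlim_{N}L_1^{(N)}$ of subrings, each $L_1^{(N)}$ being isomorphic to a finite direct product of matrix rings over $S$; then each $L_1^{(N)}$ has UGN (e.g. by Corollary~2.3), and therefore so does $L_1$ by Lemma~2.10. This generalizes the analysis of $L_0$ in the proof of Proposition~4.2, where the grading group was $\mathbb Z$ and the identity component consisted precisely of the \emph{balanced} monomials $\mu\nu^{\ast}$ with $l(\mu)=l(\nu)$, which are filtered neatly by path length. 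The genuine difficulty is that, for a nontrivial finite grading group, the identity component $L_1$ also contains \emph{unbalanced} monomials — for instance $e_1^{\,d}$, where $d$ is the order of $g_1$, which has degree $1$ but is not of the form $\mu\nu^{\ast}$ with $l(\mu)=l(\nu)$ — so the naive length filtration no longer consists of subrings. The labeling $g_1,\dots,g_\ell$ and the filtration must therefore be set up compatibly so that one still obtains an exhaustion of $L_1$ by finite products of matrix rings over $S$; carrying this out for an arbitrary nontrivial finite $G$ is the technical heart of the proof, and is exactly what the Abrams–Loc techniques are designed to supply.
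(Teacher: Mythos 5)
Your reduction to the case $n=1$ (taking a product with a ring $U$ of generating number $n$ and invoking Lemma 2.11) is sound, and mirrors a device the paper uses in Theorem 4.1 and Propositions 4.5--4.6. The fatal problem is the step you defer as the ``technical heart'': for the ring $L$ you construct, the identity component $L_1$ \emph{never} has UGN, for any choice of the labels $g_1,\dots,g_\ell$. Concretely, let $d_i$ be the order of $g_i$ in $G$ and put $u_i:=e_i^{d_i}$. Then $\deg(u_i)=g_i^{d_i}=1$, so $u_1,u_2\in L_1$, and the defining relations give $u_i^{\ast}u_j=\delta_{ij}$ (telescoping for $i=j$, and $e_i^{\ast}e_j=0$ in the middle for $i\neq j$). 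Hence the matrices $A=\begin{pmatrix}u_1^{\ast}\\ u_2^{\ast}\end{pmatrix}$ and $B=\begin{pmatrix}u_1 & u_2\end{pmatrix}$ over $L_1$ satisfy $AB=I_2$, so Lemma 2.5 yields an $L_1$-module epimorphism $L_1\to L_1^2$ and thus ${\rm gn}(L_1)=1$. The same obstruction appears structurally: since the $g_i$ generate the \emph{finite} group $G$, every $g\in G$ equals $\deg(\mu)$ for some path $\mu$ in the $e_i$, and $\mu^{\ast}\mu=1$ shows $1\in L_{g^{-1}}L_g$, whence $L_{gh}=L_{gh}L_{h^{-1}}L_h\subseteq L_gL_h$ and the grading is \emph{strong}, hence projective; Theorem C then forces $L_1$ to inherit BGN from $L$. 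So your remark that the labels ``must be chosen so that the grading fails to be projective'' cannot be arranged: fullness forces the $g_i$ to generate $G$, and finiteness of $G$ then forces strongness. This is precisely the difference from the $\mathbb Z$-graded case of Proposition 4.2, where no unbalanced isometry $e_i^{d}$ can land in the identity component.

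The paper's actual proof therefore proceeds quite differently: it applies Bergman's Theorem 2.13 to the monoid $M(n,k,l)$ of Lemma 4.9 (with $l=|G|-1$) to obtain a ring $S$ together with ``small'' projectives $X_1,\dots,X_l$ satisfying $[X_i]+[Y_i]=[S]$, and sets $R:={\rm End}_S(S\oplus X_1\oplus\cdots\oplus X_l)$ with the $G$-indexed matrix grading. The relations $x_i+y_i=u$ guarantee, via Lemmas 4.8 and 4.9, that each ${\rm End}_S(X_i)$ --- and hence $R_1\cong S\times\prod_i{\rm End}_S(X_i)$ --- has UGN, while the relation $(n+k)(u+\sum_i x_i)=n(u+\sum_i x_i)$ pins ${\rm gn}(R)$ to exactly $n$, so no product with an auxiliary ring $U$ is needed. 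Any salvage of your approach would require homogeneous components containing no one-sided-invertible elements, which is incompatible with a full $G$-grading of $L(1,\ell)$ for finite $G$.
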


Like the approach adopted in both \cite{Abrams} and \cite{Loc}, ours is based upon Theorem~2.13. In addition, we rely on Lemmas~4.11 and 4.12 below. The first, inspired by \cite[Proposition~3.3]{NTO}, describes an elementary and well-known property of projective modules that can be used to determine the generating numbers of their endomorphism rings (see \cite[Theorem 2.35]{Facchini} for a far-reaching generalization of Lemma 4.11).  The second lemma introduces a certain abelian monoid to which we will apply Theorem 2.13 when we prove Theorem~4.10. 

\begin{lemma} Let $R$ be a ring and $P$ a projective $R$-module.  Write $S:={\rm End}_R(P)$. Then the following two statements are equivalent for any $m, n\in \mathbb Z^+$.
\begin{enumerate*}
\item There is an $S$-module epimorphism $S^n\to S^m$. 
\item There is an $R$-module epimorphism $P^n\to P^m$. 
\end{enumerate*}
\end{lemma}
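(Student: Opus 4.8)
The plan is to reduce both statements to the existence of a one-sided matrix identity over $S$, via the classical identification of homomorphisms between finite powers of a module with matrices over its endomorphism ring. First I would record the following bookkeeping: for $a,b\in\mathbb Z^+$, writing $\iota_1,\dots,\iota_a\colon P\to P^a$ and $\pi_1,\dots,\pi_b\colon P^b\to P$ for the canonical injections and projections, the assignment $\phi\mapsto(\pi_i\phi\iota_j)_{1\leq i\leq b,\ 1\leq j\leq a}$ is a bijection from ${\rm Hom}_R(P^a,P^b)$ onto the set $M_{b\times a}(S)$ of $b\times a$ matrices over $S$, and a direct computation shows that it carries composition of $R$-homomorphisms to matrix multiplication over $S$ and sends ${\rm id}_{P^a}$ to the identity matrix $I_a$. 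Exactly as in Lemma~2.5, ${\rm Hom}_S(S^a,S^b)$ is identified with the same set $M_{b\times a}(S)$, again compatibly with composition and identities. Composing the two identifications yields, for all $a,b\in\mathbb Z^+$, a bijection $\Theta_{a,b}\colon{\rm Hom}_R(P^a,P^b)\to{\rm Hom}_S(S^a,S^b)$ that respects composition and carries identity maps to identity maps.

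With $\Theta$ in hand the two implications are formal. For (ii)$\Longrightarrow$(i): given an $R$-module epimorphism $\phi\colon P^n\to P^m$, the module $P^m$ is projective because $P$ is, so $\phi$ splits, say $\phi\psi={\rm id}_{P^m}$; then $\Theta_{n,m}(\phi)\,\Theta_{m,n}(\psi)=\Theta_{m,m}(\phi\psi)={\rm id}_{S^m}$, whence $\Theta_{n,m}(\phi)\colon S^n\to S^m$ is a split epimorphism, in particular an epimorphism. For (i)$\Longrightarrow$(ii): given an $S$-module epimorphism $\alpha\colon S^n\to S^m$, the free module $S^m$ is projective, so $\alpha$ splits, say $\alpha\beta={\rm id}_{S^m}$; applying $\Theta^{-1}$ gives $\Theta_{n,m}^{-1}(\alpha)\,\Theta_{m,n}^{-1}(\beta)={\rm id}_{P^m}$, so $\Theta_{n,m}^{-1}(\alpha)\colon P^n\to P^m$ is a split epimorphism, hence an epimorphism. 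Equivalently, one may prove (ii)$\Longrightarrow$(i) directly from the exactness of the functor ${\rm Hom}_R(P,-)$ (which holds since $P$ is projective), and (i)$\Longrightarrow$(ii) directly from the right-exactness of $-\otimes_S P$ together with the natural isomorphisms $S^k\otimes_S P\cong P^k$; I would keep the matrix version as the primary argument, since it dovetails with the matrix formalism used elsewhere in this section.

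The only point demanding care is the verification that each $\Theta_{a,b}$ is multiplicative and unital, which is a routine bimodule computation once conventions are fixed: here $S$ acts on $P$ on the left, while the ambient modules $S^n$ are taken to be right $S$-modules as throughout the paper, and both Hom-sets get pinned to the same matrix set $M_{b\times a}(S)$ with the same matrix multiplication. I expect this to be the main, and essentially only, obstacle, and it is slight. I note finally that the projectivity hypothesis on $P$ is used solely to split $\phi$ in the implication (ii)$\Longrightarrow$(i); the converse implication holds for an arbitrary $R$-module $P$.
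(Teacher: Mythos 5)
Your proposal is correct. Your primary route differs from the paper's: you identify both ${\rm Hom}_R(P^a,P^b)$ and ${\rm Hom}_S(S^a,S^b)$ with $b\times a$ matrices over $S$, check that the resulting bijection $\Theta$ respects composition and identities, and then transport split epimorphisms back and forth (using that $P^m$ is projective to split $\phi$ in one direction and that $S^m$ is free to split $\alpha$ in the other). The paper instead argues functorially: for (i)$\Longrightarrow$(ii) it applies $-\otimes_S P$ to an epimorphism $S^n\to S^m$ and uses $S\otimes_S P\cong P$, and for (ii)$\Longrightarrow$(i) it applies ${\rm Hom}_R(P,-)$, using projectivity of $P$ to preserve surjectivity and ${\rm Hom}_R(P,P^k)\cong S^k$ --- which is precisely the alternative you sketch in your second paragraph. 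The trade-off is as you anticipate: your matrix argument meshes with Lemma 2.5 and the matrix formalism used throughout Section 2, and makes the two directions pleasantly symmetric, but it carries the bookkeeping burden of verifying that $\Theta$ is multiplicative and unital under the paper's right-module conventions; the functorial argument is shorter, needs no splitting at all in the direction (i)$\Longrightarrow$(ii) (right-exactness of tensoring suffices), and isolates exactly where projectivity of $P$ enters. Your closing observation that (i)$\Longrightarrow$(ii) holds for arbitrary $P$ is consistent with both arguments.
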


\begin{proof} Assume that there is an $S$-module epimorphism $S^n\to S^m$. This 
induces an $R$-module epimorphism $S^n\otimes_S P\to S^m\otimes_S P$. However, $S\otimes_S P\cong P$ as $R$-modules, so that we have an $R$-module epimorphism $P^n\to P^m$. This proves (i)$\implies$(ii).

Assume that there is an $R$-module epimorphism $\phi:P^n\to P^m$. Since $P$ is projective, the map $\phi$ induces an $S$-module epimorphism ${\rm Hom}_R(P,P^n)\to {\rm Hom}_R(P,P^m)$. But, for every $k\in \mathbb Z^+$, ${\rm Hom}_R(P,P^k)\cong S^k$ as $S$-modules. Therefore we have an $S$-module epimorphism $S^n\to S^m$. This establishes (ii)$\implies$(i).
\end{proof}

\begin{lemma} For any triple $n, k, l$ of positive integers, let $M(n,k,l)$ be the abelian monoid generated by $u, x_1,\dots, x_l, y_1,\dots, y_l$ subject only to the relations
\[ (n + k)(u+x_1 + \cdots + x_l) = n(u+x_1+\cdots + x_l), \ \ \ \ x_i+y_i=u\ \mbox{\rm for}\ i=1,\dots, l.\]
Then, for any $j=1,\dots, l$ and any two positive integers $\lambda, \mu$, 
 \[\lambda x_j\leq \mu x_j\implies \lambda\leq \mu.\] 
\end{lemma}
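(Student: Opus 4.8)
The plan is to construct a monoid homomorphism from $M(n,k,l)$ to a concrete, almost-cancellative monoid that records exactly the ``$x_j$-multiplicity'' of an element while absorbing the problematic relation. Specifically, let $N:=\mathbb{Z}_{\geq 0}\cup\{\infty\}$ with the usual addition extended by declaring $\infty$ absorbing (so $\infty+t=\infty$ for every $t\in N$); this is a commutative monoid with identity $0$.

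First I would define $\phi$ on the generators of $M(n,k,l)$ by $\phi(x_j):=1$ and $\phi(g):=\infty$ for every other generator $g\in\{u,x_1,\dots,x_l,y_1,\dots,y_l\}\setminus\{x_j\}$, and then check that $\phi$ respects the two families of defining relations, so that it extends to a monoid homomorphism $\phi\colon M(n,k,l)\to N$. For the relations $x_i+y_i=u$ this is immediate, since $\phi(u)=\infty$ and $\phi(x_i)+\phi(y_i)\in\{1+\infty,\infty+\infty\}=\{\infty\}$. For the relation $(n+k)(u+x_1+\cdots+x_l)=n(u+x_1+\cdots+x_l)$, note that $\phi(u+x_1+\cdots+x_l)=\infty$ (the summand $\phi(u)=\infty$ already forces this), and $(n+k)\cdot\infty=\infty=n\cdot\infty$ in $N$.

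Then, given $j$ and positive integers $\lambda,\mu$ with $\lambda x_j\leq\mu x_j$ in $M(n,k,l)$, I would pick $w\in M(n,k,l)$ with $\lambda x_j+w=\mu x_j$ and apply $\phi$ to obtain $\lambda+\phi(w)=\mu$ in $N$, using $\phi(\lambda x_j)=\lambda$ and $\phi(\mu x_j)=\mu$. Since $\mu$ is a finite element of $N$, the summand $\phi(w)$ cannot equal $\infty$; hence $\phi(w)\in\mathbb{Z}_{\geq 0}$ and the equality $\lambda+\phi(w)=\mu$ holds in $\mathbb{Z}_{\geq 0}$, which gives $\lambda\leq\mu$, as claimed.

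There is no genuinely hard step here; the only thing to get right is the choice of target monoid. A first instinct --- mapping to a cancellative monoid such as $\mathbb{Z}_{\geq 0}$ --- fails, because the relation $(n+k)(u+x_1+\cdots+x_l)=n(u+x_1+\cdots+x_l)$ would force $k\,\phi(u+x_1+\cdots+x_l)=0$ and hence $\phi(x_j)=0$, destroying the information we need. Adjoining the absorbing element $\infty$ is precisely what lets the homomorphism satisfy that relation for free while remaining cancellative ``in the $x_j$-direction,'' which is all that the statement requires.
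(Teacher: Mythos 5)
Your proof is correct, and it takes a genuinely different (and arguably slicker) route than the paper's. The paper works with an explicit expansion $\lambda_1x_1+\cdots+\lambda_lx_l+\alpha u+\beta y=\mu x_j$ of the witness for $\lambda x_j\leq\mu x_j$ and then applies \emph{two} homomorphisms: one onto $C(n,k)$ sending $u$ and the $y_i$ to the generator $a$ and all $x_i$ to $0$, which forces $\alpha=\beta=0$ because $a$ generates no torsion below level $n$; and one $\psi_j$ into the group $\mathbb Z$ (with $\psi_j(u)=-1$, $\psi_j(x_j)=1$, etc.) to read off the coefficient of $x_j$. Your single homomorphism into $\mathbb Z_{\geq 0}\cup\{\infty\}$ accomplishes both tasks at once: the absorbing element swallows the relation $(n+k)s=ns$ and simultaneously detects whether the complement $w$ involves any generator other than $x_j$, after which ordinary cancellation in $\mathbb Z_{\geq 0}$ finishes the job. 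The verification that the generator assignment respects both families of relations is exactly as you describe, and the final step ($\lambda+\phi(w)=\mu$ with $\mu$ finite forces $\phi(w)$ finite) is airtight. What the paper's choice buys is reusability: the same $\phi$ and $\psi_j$ are recycled in the proof of Theorem 4.7 to verify the cancellativity hypothesis of Bergman's theorem for $M(n,k,l)$, whereas your map, being tailored to a single index $j$, would need to be run for each $j$ in turn there (which also works, but is not what the authors set up). As a proof of the lemma as stated, yours is complete and more economical.
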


\begin{proof}  We will make use of the monoid homomorphism $\phi:M(n,k,l)\to C(n,k)$ such that $\phi(u):=\phi(y_i):=a$ and $\phi(x_i):=0$ for $i=1,\dots , l$. In addition, we will employ the monoid homomorphisms $\psi_j:M(n,k,l)\to \mathbb Z$, for $j=1,\dots,l$, defined by 
\[ \psi_j(u):=-1, \ \ \psi_j(x_i):=\begin{cases} 1 &\ \mbox{if}\ i=j\\ 0 &\ \mbox{if}\ i\neq j\end{cases}, \ \ \mbox{and}\ \psi_j(y_i):=\begin{cases} -2 &\ \mbox{if}\ i=j\\ -1 &\ \mbox{if}\ i\neq j\end{cases}.\]

The hypothesis $\lambda x_j\leq \mu x_j$ implies that there are nonnegative integers $\lambda_1,\dots, \lambda_l, \alpha, \beta_1,\dots,\beta_l$ such that $\lambda_j\geq \lambda$ and
\begin{equation} \lambda_1x_1 + \cdots + \lambda_lx_l+ \alpha u + \beta_1 y_1 + \cdots + \beta_ly_l = \mu x_j.\end{equation}
Applying $\phi$ to equation (4.1) yields $(\alpha + \beta_1 + \cdots + \beta_l)a=0$, which means $\alpha=\beta_i=0$ for $i=1,\dots,l$. 
We now apply $\psi_j$ to (4.1), obtaining $\lambda_j=\mu$. Thus $\lambda\leq \mu$. 
\end{proof}

\begin{proof}[Proof of Theorem 4.10] We are given an arbitrary nontrivial finite group $G$ and an arbitrary positive integer $n$. Set $l:=|G|-1$. Taking $k$ to be another arbitrary positive integer, let $M(n,k,l)$ be the abelian monoid defined in Lemma 4.12. The monoid homomorphisms $\phi:M(n,k,l)\to C(n,k)$ and $\psi_j:M(n,k,l)\to \mathbb Z$, for $j=1,\dots, l$, used in the proof of that lemma can also be employed to show that, if $\alpha_1,\dots\alpha_l, \beta, \gamma_1,\dots,\gamma_l$ are nonnegative integers, then
\[\sum_{i=1}^l \left (\alpha_i x_i+\beta u+\gamma_i y_i\right )=0\ \ \ \Longleftrightarrow\ \ \ \alpha_i=\beta=\gamma_i=0\ \  \ \ \ \forall\, i=1,\dots, l.\]
This observation, combined with the fact that, for every $t\in M(n,k,l)$, $t\leq \lambda u$ for some $\lambda \in \mathbb Z^+$, allows us to invoke Theorem 2.13. 
 We thereby acquire a ring $S$  such that there is a monoid isomorphism $\theta:M(n,k,l)\to {\rm Proj}(S)$ with $\theta(u)=[S]$. 
 
For each $i=1,\dots, l$, let $X_i$ be a finitely generated projective $S$-module such that $[X_i]=\theta(x_i)$. Also, let $\sigma:G-\{1\}\to \{1,\dots,l\}$ be a bijection. For each $g\in G$, write $A_g:=X_{\sigma(g)}$ for $g\neq 1$ and $A_1:=S$. Put $A:=\bigoplus_{g\in G}A_g$ and $R:={\rm End}_S(A)={\rm End}_S(S\oplus \bigoplus_{i=1}^l X_i)$. 
The ring $R$ may be viewed as the ring of all $G\times G$ matrices $P$ such that $P(g,h)\in {\rm Hom}_S(A_h,A_g)$ for every pair $g, h\in G$. We can therefore endow $R$ with a $G$-grading by, for each $g\in G$, defining $R_g$ to be the set of all matrices $P\in R$ such that
$P(x,y)=0$ if $y\neq g^{-1}x$.  Notice that we then have
\[R_1\cong S\times {\rm End}_S(X_1)\times\cdots \times {\rm End}_S(X_l)\]
 as rings. 

 Let $i=1,\dots, l$. Write $T_i:={\rm End}_S(X_i)$ and let $p\in \mathbb Z^+$. Lemma 4.12 implies $(p+1)x_i\nleq px_i$. As a result, $X_i^{p+1}$ is not an $S$-module direct summand in $X_i^p$. Hence, by Lemma~4.11, there is no $T_i$-module epimorphism $T_i^p\to T_i^{p+1}$. In other words, $T_i$ has UGN for $i=1,\dots, l$. Thus, by Lemma~2.11 (or Lemma~2.7),  $R_1$ possesses UGN.  
 
 It remains to show that ${\rm gn}(R)=n$. To accomplish this, we observe that $n$ is the smallest positive integer such that, in the cyclic monoid $C(n,k)$, $(n+1)a\leq na$. Hence, referring to the homomorphism $\phi:M(n,k,l)\to C(n,k)$, we conclude that $n$ is also the smallest positive integer such that $(n+1)(u+\sum_{i=1}^l x_i)\leq n(u+\sum_{i=1}^l x_i)$ in $M(n,k,l)$. Therefore the integer $n$ may be further characterized as the smallest positive integer such that $\left (S\oplus \bigoplus_{i=1}^l X_i\right )^{n+1}$ is an $S$-module direct summand in  $\left (S\oplus \bigoplus_{i=1}^l X_i\right )^{n}$. An appeal to Lemma~4.11, then, permits us to conclude that $n$ is also the smallest positive integer such that there is an $R$-module epimorphism $R^n\to R^{n+1}$. In other words, we have ${\rm gn}(R)=n$. 
\end{proof}

\vspace{10pt}

\noindent {\sc Karl Lorensen}\\
Department of Mathematics and Statistics\\
Pennsylvania State University, Altoona College\\
Altoona, PA 16601, USA\\
E-mail: {\tt kql3@psu.edu}
\vspace{20pt}

\noindent {\sc Johan \"Oinert}\\
Department of Mathematics and Natural Sciences\\
Blekinge Institute of Technology\\
SE-37179 Karlskrona, Sweden\\
E-mail: {\tt johan.oinert@bth.se}

\end{document}